\newtheorem{theorem}{Theorem}[section]
\newtheorem{lemma}[theorem]{Lemma}
\newtheorem{proposition}[theorem]{Proposition}
\newtheorem{corollary}[theorem]{Corollary}
\theoremstyle{definition}
\newtheorem{definition}[theorem]{Definition}
\theoremstyle{remark}
\newtheorem{remark}[theorem]{Remark}
\numberwithin{equation}{section}
\begin{document}
\title[Number of invariant measures for random expanding maps]{On the number of invariant measures for random expanding maps in higher dimensions}
\author{Fawwaz Batayneh, Cecilia Gonz\'{a}lez-Tokman}
\address{School of Mathematics and Physics, The University of Queensland, St Lucia, QLD
4072, Australia}

\begin{abstract}
In \cite{J}, Jab\l o\'{n}ski proved that a piecewise expanding $C^{2}$
multidimensional Jab\l o\'{n}ski map admits an absolutely continuous invariant
probability measure (ACIP). In \cite{BL}, Boyarsky and Lou extended this
result to the case of i.i.d. compositions of the above maps, with an on
average expanding condition. We generalize these results to the (quenched)
setting of random Jab\l o\'{n}ski maps, where the randomness is governed by an
ergodic, invertible and measure preserving transformation. We prove that the
skew product associated to this random dynamical system admits a finite number
of ergodic ACIPs. Furthermore, we provide two different upper bounds on the
number of mutually singular ergodic ACIP's, motivated by the works of Buzzi
\cite{B} in one dimension and Gora, Boyarsky and Proppe \cite{GBP} in higher dimensions.

\end{abstract}
\maketitle

\bigskip

\section{Introduction}

A fundamental problem in ergodic theory is to describe the asymptotic
statistical behavior of orbits defined by a dynamical system. In this
approach, one attempts to understand and quantify the different invariant
measures of the system, in particular those which have physical relevance.
This problem has been studied intensively for several classes of piecewise
smooth systems, starting with one dimensional deterministic systems in the key
paper \cite{LY} by Lasota and Yorke in 1973. In 2000, Buzzi \cite{B}
identified bounds on the number of physical measures for random compositions
of Lasota--Yorke maps. In higher-dimensional frameworks, including random
versions of \cite{GBMULTI,S,C,Thomine}, understanding and, specifically,
bounding the number of physical measures is still an unsolved problem. This
challenge is related to open questions in multiplicative ergodic theory,
regarding multiplicity of Lyapunov exponents. The focus of this work is on
investigating and bounding the number of physical measures for a class of
higher dimensional expanding-on-average random dynamical systems, where the
randomness is driven by a rather general type of ergodic process, including
but not limited to the i.i.d. case.

In this paper we study a class of discrete time dynamical systems in which, at
each iteration of the process, one of a collection of maps is selected and
applied. Ulam and von Neumann \cite{UN}, Morita \cite{M}, Pelikan \cite{P} and
Buzzi \cite{B} were among those who started working on such systems, which
have been named time dependent, random or non autonomous dynamical systems. In
general, there is no measure which is invariant under all these maps
simultaneously. Therefore, we instead consider random invariant measures which
are absolutely continuous with respect to Lebesgue (ACIPs), and their
associated marginals, which give rise to physical measures.

This work focuses on dynamical systems modeled by random compositions of
so-called Jab\l o\'{n}ski maps. These maps have been studied by several
researchers after the first paper \cite{J} by Jab\l o\'{n}ski in 1983. In
\cite{GBapp}, G\'{o}ra and Boyarsky used Jab\l o\'{n}ski transformations as a
model for interacting cellular systems. In \cite{BLappr}, Boyarsky and Lou
presented a method for approximating the ACIPs in \cite{J} by means of
approximating the transfer operator by finite dimensional operators, which is
a version of Ulam's conjecture in a multidimensional setting. In \cite{BLG},
Boyarsky, G\'{o}ra and Lou considered a larger class of $C^{2}$
transformations defined on a rectangular partition of the $n$ dimensional
cube. The authors approximated any such map by a sequence of Jab\l o\'{n}ski
transformations and proved that the sequence of invariant densities associated
with these Jab\l o\'{n}ski maps converges weakly in $L^{1}$ to the invariant
density associated with that map. In \cite{Bose}, Bose replaced the weak
approximation of the invariant density in \cite{BLappr} by strong
approximation using a compactness argument. The special case of random i.i.d.
Jab\l o\'{n}ski maps was studied in \cite{BL,Mathesis}.

Random Jab\l o\'{n}ski maps, introduced in Definition~\ref{def:JabMap}, are
defined by a collection of piecewise smooth maps $(f_{\omega})_{\omega
\in\Omega}$ defined on the state or phase space $I^{n}$, where $I=[0,1]$ and
$n\in\mathbb{N}$ is the dimension, equipped with the Borel sigma algebra of
measurable sets and the $n$ dimensional Lebesgue measure $m$. The family of
maps is assumed to satisfy an expanding-on-average condition.

Our approach relies on so-called transfer operators, acting on the space of
higher dimensional functions of bounded variation. Given a nonsingular map
$f$, its transfer operator $\mathcal{L}_{f}$ encodes information about the
application of $f$ and describes how densities, i.e. nonnegative integrable
functions with integral one, evolve in time. If a collection of points in
phase space is distributed according to a probability density function $h$,
and pushed forward by $f$, then the resulting collection of points will be
distributed according to a new density denoted by $\mathcal{L}_{f}(h)$ or
$\mathcal{L}_{f}h$.

The first appearance of one dimensional functions of bounded variation is due
to C. Jordan in 1881 in connection with Dirichlet's test for the convergence
of Fourier series. In 1905, G. Vitali gave the first definition of bounded
variation function in two dimensions. Later on, L. Tonelli observed that
Vitali's generalization was not the right generalization of the one
dimensional variation because it contains second order elements related to the
curvature of the graph rather than its area. In 1936, in a closer analogy to
the one dimensional variation, Tonelli introduced his generalization which
measures the length of the projection of the graph onto the vertical axis
counting multiplicities at least for continuous functions. Tonelli's
definition is more convenient for continuous functions since the definition
depends on the choice of the coordinate axes if the function is not
continuous. To solve this issue, in the same year, L. Cesari modified
Tonelli's definition by requiring the integrals in Tonelli's definition to be
finite for functions equal almost everywhere. This definition does not depend
on the coordinates even for discontinuous functions. Functions of bounded
variation in this sense were called bounded variation functions in the sense
of Tonelli--Cesari. However, the point of view which is popular these days and
adapted in most of the literature \cite{GBMULTI} as the most suitable
generalization of the one dimensional theory is due to De Giorgi and Fichera.
Krickeberg and Fleming independently showed that a bounded variation function
in the sense of Tonelli--Cesari has a vector measure as its distributional
gradient, thus obtaining the equivalence with De Giorgi's definition. For more
information on historical details about higher dimensional functions of
bounded variation, we refer the reader to \cite{AFP}.

In the deterministic case, an early use of transfer operators in the one
dimensional bounded variation setting is due to Lasota and Yorke, who in
\cite{LY} proved the existence of ACIPs for piecewise $C^{2}$ transformations
$f$ on $I$, with the assumption of a uniform expanding condition
$\inf|f^{{\prime}}|>1$. The authors exploited the fact that the transfer
operator corresponding to the point transformation under consideration has the
property of keeping the variation of the functions $h,\mathcal{L}_{f}%
h,\dots,\mathcal{L}_{f}^{n}h,\dots$ under control. This result was later on
referred to as Lasota-Yorke inequality. In \cite{J}, Jab\l o\'{n}ski
generalized the one dimensional work of Lasota and Yorke \cite{LY} to
piecewise continuous maps on the multidimensional cube $I^{n}$ with similar
type of uniform expanding condition on the rectangles of a rectangular
partition. The proof of this result was similar to the proof of Theorem $1$ in
\cite{LY}, but it uses the notion of variation of functions of several
variables due to Tonelli--Cesari, which we also use in this paper.

In higher dimensions, the situation is more challenging than in one dimension.
For example, in the general case, crucial difficulties come from the much
richer geometry which can arise from the phase space partitions, and from the
growing complexity of the partitions arising from the iterated dynamics. To
overcome these issues, one may impose conditions on the geometry of the
partitions and, roughly speaking, to ensure the amount of expansion is enough
to overcome the dynamical complexity. See, for example, the conditions given
in Theorem $4$ in \cite{GBP}, Theorem $3.1$ in \cite{C} and equation $(1.8)$
in \cite{Thomine}.

A number of authors have studied the existence of ACIPs for piecewise
expanding maps in higher dimensions. In \cite{GBMULTI}, G\'{o}ra and Boyarsky
proved the existence of ACIPs with densities of bounded variation for
piecewise $C^{2}$ transformations in $%
\mathbb{R}
^{n}$ for domains with piecewise $C^{2}$ boundaries with the assumption that
where the $C^{2}$ segments of the boundaries meet, the angle subtended by the
tangents to these segments at the point of contact is bounded away from zero.
The case when the boundaries for which the angle mentioned may become zero
(i.e. the boundaries of partitions may contain 'cusps') is studied in
\cite{Ke,Adl} by Keller and Adl-Zarabi. In \cite{S}, Saussol developed a
Lasota-Yorke inequality for a class of piecewise expanding maps defined on a
compact subset of $%
\mathbb{R}
^{n}$ and used it to prove the existence of a finite number of ACIPs with
densities in the Quasi-H\"{o}lder space. The author also provided an upper
bound on the number of these ACIPs. In \cite{C}, Cowieson extended the work of
G\'{o}ra and Boyarsky by establishing a simpler condition which guarantees the
existence of an ACIPs. The condition is that, the expansion must be greater
than the cut index defined in \cite[Section $2.2$]{C}. The author made some
statements about random perturbations of such maps in \cite[Theorem $3.2$]{C}.
In \cite{Thomine}, Thomine gave a sufficient condition shown in \cite[Equation
$(1.7)$]{Thomine} under which a piecewise $C^{1+\alpha}$ uniformly expanding
map admits a finite number of ACIPs. The author also compares his results with
the work of Saussol \cite{S} and Cowieson \cite{C}. Although no upper bounds
on the number of these ACIPs are explicitly given in \cite{Thomine}, the
author mentions that the results of \cite{S} could perhaps be adapted to his setting.

In the random one-dimensional case, in \cite{B}, Buzzi considers random
expanding-on-average Lasota--Yorke maps that have neither too many branches
nor too large distortion, and proves that the associated skew product
transformation possesses a finite number of mutually singular ergodic ACIPs,
each giving a family of random invariant measures with densities of bounded
variation. In \cite{AS}, Araujo and Solano proved existence of ACIPs for
random one dimensional dynamical systems with asymptotic expansion. Their work
can be seen as a generalization of the work of Keller \cite{KEONE} which
proves that for maps on the interval with finite number of critical points and
non-positive Schwarzian derivative, existence of absolutely continuous
invariant probability is earned by positive Lyapunov exponents. They also
prove similar results for higher dimensional random systems under the
assumption of slow recurrence to the set of discontinuities and/or
criticalities, which are of a certain non-degenerate type, shown in
\cite[Equation $(1.5)$]{AS}.

In\ \cite{BL}, Boyarsky and Lou studied the case of i.i.d. compositions of
Jab\l o\'{n}ski maps as defined in \cite{J}. The authors considered the
setting of a finite number of piecewise $C^{2}$ and monotonic Jab\l o\'{n}ski
maps $f_{1}$,\dots,$f_{l}$ where $l$ is a finite positive integer and%
\[
f_{k,i}(x_{1},\dots,x_{n})=\varphi_{k,i,j}(x_{i})\text{,}%
\]
for $(x_{1},\dots,x_{n})\in D_{k,j}$ where $D_{k,j}$ is the $j^{\text{th}}$
rectangle in the partition of $f_{k}$. The random map $f$ is defined by
choosing $f_{i}$ with probability $p_{i}$, where the $p_{i}$'s are positive
and add up to one. The authors assumed an expanding-on-average condition that
is, there exists a positive constant $0<\gamma<1$ such that%
\begin{equation}
\sum_{i=1}^{l}\sup_{j}\frac{p_{i}}{|\varphi_{k,i,j}^{\prime}(x_{i})|}%
\leq\gamma\text{,} \label{foexc}%
\end{equation}
for all $i=1,\dots,l$ and $(x_{1},\dots,x_{n})\in cl(D_{k,j})$ (the closure of
$D_{k,j}$) and proved that $f$ admits an ACIP with respect to the Lebesgue
measure. This measure has density $h$ which is a fixed point of the averaged
transfer operator $\mathcal{L}_{f}$ of $f$, given by%
\begin{equation}
\mathcal{L}_{f}=\sum_{i=1}^{l}p_{i}\mathcal{L}_{fi}\text{,} \label{fato}%
\end{equation}
where $\mathcal{L}_{fi}$ is the transfer operator of the corresponding map
$f_{i}$.

Our conditions on the maps are much more general than the ones in \cite{BL,P}.
In both articles, the maps driving the dynamics or defining the random orbits
are given by an i.i.d. process. Moreover, the maps must be chosen from a
finite set. However, in our situation the way of selecting the maps comes from
the base map $\sigma$ defined on a probability measure space $(\Omega
,\mathcal{F},\mathbb{P)}$. A difficulty of this setting is that there is no
known formula for an averaged transfer operator that corresponds to the one
described in \eqref{fato} in \cite{P,BL}. The way we overcome this obstacle,
as it has been done in \cite{B,FLQ,CQ}, is by developing a random Lasota-Yorke
inequality, Equation~\eqref{lyi}, which we use to prove several results in
this paper.

Quasi-compactness is one of the concepts which has played a key role in the
modern approach to investigate transport properties of random dynamical
systems through transfer operators \cite{FLQ,CQ}. In the case of autonomous
systems, this property was introduced in the work of Ionescu Tulcea and
Marinescu \cite{IM}. A bounded linear operator defined on a Banach space is
called quasi-compact if its spectral radius is strictly larger than its
essential spectral radius. The formulation of a non-autonomous analogue of the
quasi-compactness property goes back to Thieullen \cite{Thieullen}. It is now
widely known that quasi-compactness can be usually derived from Lasota-Yorke
type inequalities, and this is the route we pursue.

The quasi-compactness theorem of Ionescu Tulcea and Marinescu \cite{IM} is
used to provide spectral decompositions and properties in the case of
deterministic dynamical systems similar to the ones given in Section $3$ in
\cite{GBMULTI}. However, in the random case one instead uses Oseledets type
multiplicative ergodic theorems. In 1968, Oseledets multiplicative ergodic
theorem was first introduced by Oseledets \cite{Oseledets} in the context of
random multiplication of matrices. In its basic form, it describes the
asymptotic behavior of a product of matrices sampled from a dynamical system.
After that, different proofs were provided and different generalizations have
been developed and applied to transfer operator cocycles, see \cite{FLQ,CQ}.
In this paper we adapt Theorem $17$ from \cite{FLQ} to provide an Oseledets
splitting for random Jab\l o\'{n}ski maps.

When applicable, multiplicative ergodic theorems provide existence and
finiteness of random ACIPs. However, explicit bounds do not come directly from
this machinery. Despite some progress by Buzzi \cite{B} and Araujo--Solano
\cite{AS}, the question of how to find bounds on the number of ACIPs in random
dynamical systems is largely open. In \cite[Theorem $2$]{GBP}, G\'{o}ra,
Boyarsky and Proppe proved that, in their setting, the support of absolutely
continuous invariant measures is open Lebesgue almost everywhere. They used
this key fact to obtain their result \cite[Theorem $3$]{GBP} that the number
of ergodic ACIP's for deterministic dynamical systems modeled by
Jab\l o\'{n}ski transformations is at most equal to the number of crossing
points. We combine elements of their arguments with ideas from the one
dimensional work of Buzzi on random Lasota-Yorke maps (see Section $3$ in
\cite{B}) to develop a bound on the number of mutually singular ergodic ACIP's
for a class of admissible random Jab\l o\'{n}ski maps. Another bound is also
developed, and these bounds are compared in Section \ref{S:ex}.

This paper is structured as follows: in Section \ref{S:2}, we state the
definition of admissible random Jab\l o\'{n}ski maps, which involves the
formulation of an expanding-on-average condition motivated from the expanding
condition given in \eqref{foexc}. In Section \ref{S:3}, in Theorem \ref{quco},
we prove that this random map is quasi-compact and the maximal Lyapunov
exponent is indeed zero. In Section \ref{S:4}, in Theorem \ref{muer}, we prove
that the random invariant densities of admissible random Jab\l o\'{n}ski maps
are of bounded variation and equivariant. In Corollary \ref{coro1}, we prove
these densities belong to the leading Oseledets subspace and the number of
ergodic ACIPs with respect to the associated skew product is finite. Theorem
\ref{ph} is a probabilistic conclusion that shows that the marginals of the
measures in Theorem \ref{muer} are physical, which means that for Lebesgue
almost initial condition, the asymptotic long term behaviour of the
corresponding random orbit will be described by one of these physical
measures. In Section~\ref{S:bounds}, we establish upper bounds on the number
of mutually singular ergodic ACIPs for a class of admissible random
Jab\l o\'{n}ski maps, and present an example in Section \ref{S:ex}.

\section{Terminology and background}

\label{S:2}

\subsection{Preliminaries}

In this subsection we state the basic definitions and tools that will be used
throughout the paper.

\begin{definition}
Let $(\Omega,\mathcal{F},\mathbb{P)}$ be a probability space. A measurable
transformation $\sigma:\Omega\circlearrowleft$ is said to be
\emph{nonsingular} if
\[
\mathbb{P}(\sigma^{-1}(A))=0\text{,}%
\]
for all $A\in\mathcal{F}$ with $\mathbb{P}(A)=0$.
\end{definition}

\begin{definition}
Let $(\Omega,\mathcal{F},\mathbb{P)}$ be a probability space. A transformation
$\sigma:\Omega\circlearrowleft$ is said to be a \emph{measure-preserving
transformation} or, equivalently, $\mathbb{P}$ is said to be a $\sigma
-$\emph{invariant measure}, if
\[
\mathbb{P}(\sigma^{-1}(A))=\mathbb{P}(A)\text{,}%
\]
for all $A\in\mathcal{F}$.
\end{definition}

\begin{definition}
Let $(\Omega,\mathcal{F},\mathbb{P)}$ be a probability space. A nonsingular
transformation $\sigma:\Omega\circlearrowleft$ is said to be \emph{ergodic} if
for all $A\in\mathcal{F},$ with $\sigma^{-1}(A)=A,$ we have $\mathbb{P}(A)=0$
or $\mathbb{P}(\Omega\backslash A)=0.$
\end{definition}

\begin{definition}
Let $(X,\mathcal{B},%
\mu
)$ be a measure space and $f:X\circlearrowleft$ a nonsingular transformation.
The unique operator $\mathcal{L}_{f}:L^{1}(X)\circlearrowleft$ satisfying the
dual relation%
\[
\int_{A}\mathcal{L}_{f}h(x)\mu(dx)=\int_{f^{-1}A}h(x)\mu(dx)\text{,}%
\]
for every $A\in\mathcal{B}$ and $h\in L^{1}(X)$ is called \emph{the transfer
or Perron-Frobenius operator} corresponding to $f$.
\end{definition}

For $x=(x_{1},\dots,x_{n})$, if $A=\prod_{i=1}^{n}[0,x_{i}]$ in the above
definition, then differentiating both sides, we obtain%
\[
\mathcal{L}_{f}h(x)=\frac{\partial^{n}}{\partial x_{1}\dots\partial_{x_{n}}%
}\int_{f^{-1}(\prod_{i=1}^{n}[0,x_{i}])}h(y)\mu(dy)\text{,}%
\]
where $\partial x_{i}$ is the derivative with respect to $x_{i}$,
$i=1,\dots,n$. This formula can be seen in Section $2$ in \cite{BLG}. It is
well known that the transfer operator is linear, positive, contractive and
$\mathcal{L}_{f}h=h$\ if and only if the measure $\nu$ where $d\nu=hd\mu$ is
invariant under $f$, see \cite{A}.

Given sets $A_{i},$ $i=1,\dots,n$, denote the Cartesian product of the sets
$A_{i}$ by $%
{\displaystyle\prod\limits_{i=1}^{n}}
A_{i}=\{(a_{1},\dots,a_{n}):a_{i}\in A_{i},i=1,\dots,n\}$. For $i=1,\dots,n,$
let $P_{i}$ be the projection of $%
\mathbb{R}
^{n}$ onto $%
\mathbb{R}
^{n-1}$ given by
\begin{equation}
P_{i}(x_{1},\dots,x_{n})=(x_{1},\dots,x_{i-1},x_{i+1},\dots,x_{n})\text{.}
\label{projj}%
\end{equation}

We next describe the definition of the total variation of an integrable
function of several variables, due to Tonelli and Cesari, which was first used
in the context of transfer operators in \cite{BL,J}.

\begin{definition}
Consider the $n$ dimensional rectangle $A=$ $\ \
{\displaystyle\prod\limits_{i=1}^{n}}
[a_{i},b_{i}]$ where $a_{i},b_{i}\in%
\mathbb{R}
$ and $a_{i}<b_{i}$ and a function $g:A\rightarrow%
\mathbb{R}
$. For $i=1,\dots,n$, consider a real valued function $\underset{i}%
{\overset{A}{\mathbf{V}}}g$ of $(n-1)$ variables $(x_{1},\dots,x_{i-1}%
,x_{i+1},\dots,x_{n})$ given by%
\[
\underset{{\tiny
\begin{array}
[c]{c}%
a_{i}=x_{i}^{0}<x_{i}^{1}<\dots<x_{i}^{r}=b_{i}\\
r\in%
\mathbb{N}%
\end{array}
}}{\sup}\sum_{k=1}^{r}\Big(|g(x_{1},\dots,x_{i}^{k},\dots,x_{n})-g(x_{1}%
,\dots,x_{i}^{k-1},\dots,x_{n})|\Big)\text{.}%
\]

For a measurable function $f\in L^{1}(A)$ and $i=1,\dots,n$, define
\[
\underset{i}{\overset{A}{\mathbb{V}}}f=\underset{{\tiny
\begin{array}
[c]{c}%
g=f\text{ a.e.}\\
\underset{i}{\overset{A}{\mathbf{V}}}g\text{ is measurable}%
\end{array}
}}{\inf}\int_{P_{i}(A)}\underset{i}{\overset{A}{\mathbf{V}}}gdm\text{,}%
\]

\end{definition}

\begin{definition}
where $P_{i}$ is defined in \eqref{projj}and let \emph{the total variation of
}$f$ be
\[
\overset{A}{\mathbb{V}}f=\underset{i=1,\dots,n}{\max}\underset{i}{\overset
{A}{\mathbb{V}}}f\text{.}%
\]
If the total variation$\overset{A}{\text{ }\mathbb{V}}f$ of $f$ on $A$ is a
finite, then $f$ is said to be of \emph{bounded variation on }$A$, and\ the
set of all such maps is denoted by $BV(A)$. For $f\in BV(A)$, the norm of $f$
is defined by $\Vert f\Vert_{BV}=\Vert f\Vert_{1}+\overset{A}{\mathbb{V}}f$.
\end{definition}

The space $BV(A)$ is a Banach space by Remark $1.12$ in \cite{G} and compactly
embedded in $L^{1}(A)$ by Corollary $3.49$ in \cite{AFP}.

\subsection{Random dynamical systems}


\begin{definition}
\emph{A random dynamical system} is a tuple $\mathcal{R}=(\Omega
,\mathcal{F},\mathbb{P},\sigma,\mathcal{X},\mathcal{L}\mathbb{)}$, where the
base $\sigma$ is an invertible measure-preserving transformation of the
probability space $(\Omega,\mathcal{F},\mathbb{P)}$, $(\mathcal{X},\Vert
\cdot\Vert)$ is a Banach space and $\mathcal{L}:\Omega\mathcal{\rightarrow
}L(\mathcal{X},\mathcal{X})$ is a family of bounded linear maps of
$\mathcal{X}$, called \emph{the generator}.
\end{definition}

For convenience, we let $\mathcal{L}_{\omega}:=\mathcal{L}(\omega)$. A random
dynamical system defines a cocycle, given by
\[
(k,\omega)\mapsto\mathcal{L}_{\omega}^{(k)}:=\mathcal{L}_{\sigma^{k-1}\omega
}\circ\dots\circ\mathcal{L}_{\sigma\omega}\circ\mathcal{L}_{\omega}\text{.}%
\]

Different regularity conditions may be imposed on the generator $\mathcal{L}$.
The following concept of $\mathbb{P}$-continuity, which was first introduced
by Thieullen in \cite{Thieullen}, will be used in the sequel.

\begin{definition}
Let $\Omega$ be a topological space, equipped with a Borel probability measure
$\mathbb{P}$ and let $Y$ be a topological space. A mapping $L:\Omega
\rightarrow Y$ is said to be $\mathbb{P}$\emph{-continuous} if $\Omega$ can be
expressed as a countable union of Borel sets such that the restriction of $L$
to each of them is continuous.
\end{definition}

In the rest of this work, we consider random dynamical systems whose
generators $\mathcal{L}:\Omega\mathcal{\rightarrow}L(\mathcal{X},\mathcal{X}%
)$, given by $\omega\mapsto\mathcal{L}_{\omega}$, are $\mathbb{P}$-continuous
and that $\Omega$ is a Polish space. That is, a complete separable metric space.

\begin{definition}
\emph{The index of compactness} (or \emph{Kuratowski measure of
noncompactness}) of a bounded linear map $A:\mathcal{X}\circlearrowleft$ is
\[
\Vert A\Vert_{ic(\mathcal{X})}=\inf\{r>0:A(B_{\mathcal{X}})\text{ can be
covered by finitely many balls of radius }r\}\text{,}%
\]
where $B_{\mathcal{X}}$ denotes the unit ball in $\mathcal{X}$.
\end{definition}

\begin{definition}
\label{le} Let $\mathcal{R}$ $=$ $(\Omega,\mathcal{F},\mathbb{P}%
,\sigma,\mathcal{X},\mathcal{L})$ be a random dynamical system. Assume that
$\int_{\Omega}\log^{+}\Vert\mathcal{L}_{\omega}\Vert d\mathbb{P(\omega
)<\infty}.$ For each $\omega\in\Omega,$ \emph{the maximal Lyapunov exponent
}$\lambda(\omega)$\emph{ for }$\omega$ is defined as%
\[
\lambda(\omega)=\lim_{k\rightarrow\infty}\frac{1}{k}\log\Vert\mathcal{L}%
_{\omega}^{(k)}\Vert\text{,}%
\]
whenever the limit exists. \emph{The index of compactness }$K(\omega)$\emph{
for }$\omega$ is defined as%
\[
\mathcal{K}(\omega)=\lim_{k\rightarrow\infty}\frac{1}{k}\log\Vert
\mathcal{L}_{\omega}^{(k)}\Vert_{ic(\mathcal{X})}\text{,}%
\]
whenever the limit exists.
\end{definition}

The following is established in \cite{CQ}.

\begin{remark}
\label{re4} If a random dynamical system $\mathcal{R}$ has an ergodic base
$\sigma$, then $\lambda$ and $\mathcal{K}$ in the previous definition are
$\mathbb{P}-$almost everywhere constant. We call these constants
$\lambda^{\ast}(\mathcal{R})$ and $\mathcal{K}^{\ast}(\mathcal{R})$, or simply
$\lambda^{\ast}$ and $\mathcal{K}^{\ast}$, if $\mathcal{R}$ is clear from the
context. It follows from the definition that $\mathcal{K}^{\ast}\leq
\lambda^{\ast}$. The assumption $\int_{\Omega}\log^{+}\Vert\mathcal{L}%
_{\omega}\Vert d\mathbb{P(\omega)<}\infty$ implies that $\lambda^{\ast}%
<\infty$.
\end{remark}

\begin{definition}
A random dynamical system $\mathcal{R}$ with an ergodic base $\sigma$ is
called \emph{quasi-compact} if $\mathcal{K}^{\ast}<\lambda^{\ast}$.
\end{definition}

The next proposition relates the maximal Lyapunov exponent $\lambda^{\ast
}(\mathcal{R})$ and the index of compactness $\mathcal{K}^{\ast}(\mathcal{R})$
of a random dynamical system $\mathcal{R}$ with the corresponding quantities
for $\mathcal{R}^{(n)}$ $=$ $(\Omega,\mathcal{F},\mathbb{P},\sigma
^{n},\mathcal{X},\mathcal{L}^{(n)})$, $n\in%
\mathbb{N}
$.

\begin{proposition}
\label{kjn} Consider a random dynamical system $\mathcal{R}$ $=$
$(\Omega,\mathcal{F},\mathbb{P},\sigma,\mathcal{X},\mathcal{L})$ with an
ergodic base $\sigma$. Then, for each $n\in%
\mathbb{N}
$, $\mathcal{R}^{(n)}$ $=$ $(\Omega,\mathcal{F},\mathbb{P},\sigma
^{n},\mathcal{X},\mathcal{L}^{(n)})$ is a random dynamical system, with a
possibly non-ergodic base $\sigma^{n}$. For each $\omega\in\Omega$, let
\begin{align*}
\lambda_{n}(\omega)  &  =\lim_{k\rightarrow\infty}\frac{1}{k}\log
\Vert\mathcal{L}_{\omega}^{(nk)}\Vert\text{,}\\
\mathcal{K}_{n}(\omega)  &  =\lim_{k\rightarrow\infty}\frac{1}{k}\log
\Vert\mathcal{L}_{\omega}^{(nk)}\Vert_{ic(\mathcal{X})}\text{.}%
\end{align*}
Then%
\begin{align*}
\lambda_{n}(\omega)  &  =n\lambda^{\ast}(\mathcal{R})\text{,}\\
\mathcal{K}_{n}(\omega)  &  =n\mathcal{K}^{\ast}(\mathcal{R})\text{,}%
\end{align*}
$\mathbb{P}-$almost everywhere.
\end{proposition}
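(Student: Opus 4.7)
The plan is to exploit the cocycle identity $(\mathcal{L}^{(n)})_{\omega}^{(k)} = \mathcal{L}_{\omega}^{(nk)}$, which is immediate from the definition of the cocycle, so that the sequences defining $\lambda_{n}(\omega)$ and $\mathcal{K}_{n}(\omega)$ for $\mathcal{R}^{(n)}$ are precisely rescaled subsequences of the sequences that already converge for $\mathcal{R}$.

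First I would check that $\mathcal{R}^{(n)}$ is a random dynamical system in the sense of Definition~\ref{le}. Since $\sigma$ preserves $\mathbb{P}$, so does $\sigma^{n}$ (ergodicity is not inherited in general, but will not be needed). Submultiplicativity of the operator norm gives $\|\mathcal{L}_{\omega}^{(n)}\| \leq \prod_{j=0}^{n-1} \|\mathcal{L}_{\sigma^{j}\omega}\|$, so
\[
\int_{\Omega} \log^{+} \|\mathcal{L}_{\omega}^{(n)}\|\, d\mathbb{P}(\omega) \leq \sum_{j=0}^{n-1} \int_{\Omega} \log^{+} \|\mathcal{L}_{\sigma^{j}\omega}\|\, d\mathbb{P}(\omega) = n \int_{\Omega} \log^{+} \|\mathcal{L}_{\omega}\|\, d\mathbb{P}(\omega) < \infty\text{,}
\]
by $\sigma$-invariance and the standing integrability assumption on $\mathcal{R}$. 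Hence the quantities $\lambda_{n}(\omega)$ and $\mathcal{K}_{n}(\omega)$ are well defined whenever the respective limits exist.

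Next I would apply Remark~\ref{re4} to $\mathcal{R}$ itself: the ergodicity of $\sigma$ provides a $\mathbb{P}$-full-measure set $\Omega_{0} \subset \Omega$ on which both
\[
\lim_{m \to \infty} \tfrac{1}{m} \log \|\mathcal{L}_{\omega}^{(m)}\| = \lambda^{\ast}(\mathcal{R}) \quad \text{and} \quad \lim_{m \to \infty} \tfrac{1}{m} \log \|\mathcal{L}_{\omega}^{(m)}\|_{ic(\mathcal{X})} = \mathcal{K}^{\ast}(\mathcal{R})
\]
hold. Restricting to the subsequence $m = nk$ preserves both limits; multiplying through by $n$ then yields
\[
\lim_{k \to \infty} \tfrac{1}{k} \log \|\mathcal{L}_{\omega}^{(nk)}\| = n \lambda^{\ast}(\mathcal{R})\text{,} \qquad \lim_{k \to \infty} \tfrac{1}{k} \log \|\mathcal{L}_{\omega}^{(nk)}\|_{ic(\mathcal{X})} = n \mathcal{K}^{\ast}(\mathcal{R})\text{,}
\]
for every $\omega \in \Omega_{0}$, which is the claim.

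There is essentially no substantive technical obstacle here. The only subtlety worth flagging is that one should resist the temptation to apply the ergodic-base statement of Remark~\ref{re4} directly to $\mathcal{R}^{(n)}$, because $\sigma^{n}$ need not be ergodic; the argument bypasses this precisely by working inside $\mathcal{R}$, where the base \emph{is} ergodic, and invoking the trivial fact that a convergent sequence hands its limit to every subsequence.
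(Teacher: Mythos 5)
Your proof is correct and follows essentially the same route as the paper: the paper's own argument is exactly the observation that $\{\mathcal{L}_{\omega}^{(nk)}\}_{k}$ is a subsequence of $\{\mathcal{L}_{\omega}^{(k)}\}_{k}$ together with Remark~\ref{re4}, which is your rescaled-subsequence step. The integrability check for $\mathcal{R}^{(n)}$ you include is a harmless (and reasonable) addition that the paper leaves implicit.
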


\begin{proof}
For each $n\in%
\mathbb{N}
$, $\{\mathcal{L}_{\omega}^{(nk)}\}_{k=1}^{\infty}$ is a subsequence of
$\{\mathcal{L}_{\omega}^{(k)}\}_{k=1}^{\infty}$. Thus, the proof follows from
Remark~\ref{re4}.
\end{proof}

While the transformation $\sigma^{n}$ may be non-ergodic when $\sigma$ is
ergodic, the following result ensures that $\sigma^{n}$ is ergodic on some
subset $Z\subset\Omega$. This result will be used in the proof of Proposition
\ref{singmanergodic copy(3)}.

\begin{lemma}
[{Gonz\'alez-Tokman \& Quas \cite[Lemma $35$]{CQ1}}]\label{singmanergodic}Let
$\sigma$ be an ergodic $\mathbb{P}$-preserving transformation of
$(\Omega,\mathcal{F},\mathbb{P})$ and let $n\in\mathbb{N}$. Then there exists
$k$, a factor of $n$, and a $\sigma^{n}$-invariant subset $Z$ of $\Omega$ of
measure $1/k$ such that $\Omega=%
{\displaystyle\bigcup\limits_{s=0}^{k-1}}
\sigma^{-\ell}Z$ and $\sigma^{n}|_{Z}$ is ergodic. When $\sigma$ is
invertible, this argument also applies to $n<0$.
\end{lemma}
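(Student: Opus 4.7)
The plan is to pick $Z$ as an ergodic component of $\sigma^n$ and then extract the period $k$ by following the $\sigma$-orbit of $Z$ inside the $\sigma$-algebra of $\sigma^n$-invariant sets. If $\sigma^n$ happens to be ergodic, I would simply take $k=1$ and $Z=\Omega$. Otherwise, since $\Omega$ is Polish and hence a standard Borel space, the ergodic decomposition theorem applies to $\sigma^n$, and I would select an ergodic component $Z\in\mathcal{F}$ with $\mathbb{P}(Z)>0$, so that $\sigma^{-n}Z=Z$ modulo null and $\sigma^n|_Z$ is ergodic with respect to the normalized restriction of $\mathbb{P}$ to $Z$.

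Next, I would define $k$ to be the least positive integer with $\mathbb{P}(\sigma^{-k}Z\,\triangle\, Z)=0$; this exists because $n$ itself has the property. The Euclidean division $n=qk+r$ with $0\le r<k$ then yields $\sigma^{-r}Z=Z$ modulo null, so by minimality $r=0$ and hence $k\mid n$.

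I would then show that the translates $Z,\sigma^{-1}Z,\dots,\sigma^{-(k-1)}Z$ are pairwise disjoint modulo null. Each $\sigma^{-s}Z$ is still $\sigma^n$-invariant because $\sigma^{-n}(\sigma^{-s}Z)=\sigma^{-s}(\sigma^{-n}Z)=\sigma^{-s}Z$ modulo null, and each is in fact an ergodic component of $\sigma^n$: using that $\sigma$ is invertible and measure-preserving, any $\sigma^n$-invariant $B\subset\sigma^{-s}Z$ corresponds under $\sigma^s$ to a $\sigma^n$-invariant subset of $Z$ of the same measure, which by ergodicity of $\sigma^n|_Z$ has measure $0$ or $\mathbb{P}(Z)$. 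Two ergodic components that intersect in positive measure must therefore coincide modulo null, and applying $\sigma^t$ to a hypothetical equality $\sigma^{-s}Z=\sigma^{-t}Z$ with $0\le s<t\le k-1$ would give $\sigma^{-(t-s)}Z=Z$ modulo null, contradicting the minimality of $k$.

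Finally, $U=\bigcup_{s=0}^{k-1}\sigma^{-s}Z$ satisfies $\sigma^{-1}U=U$ modulo null by construction (using $\sigma^{-k}Z=Z$ to close up the cycle), has positive measure, and hence $\mathbb{P}(U)=1$ by ergodicity of $\sigma$. Since $\sigma$ preserves $\mathbb{P}$ and the $k$ translates are disjoint, each of them must have measure $1/k$. The case $n<0$ with $\sigma$ invertible follows by applying the same argument to $\sigma^{-1}$. The main subtlety is the ergodic decomposition step, which relies on the Polish hypothesis on $\Omega$, together with the transfer of ergodicity from $\sigma^n|_Z$ to its translates; this transfer is routine given the invertibility of $\sigma$ assumed throughout the paper.
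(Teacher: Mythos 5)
The paper itself offers no proof of this lemma: it is quoted from Gonz\'alez-Tokman and Quas \cite[Lemma 35]{CQ1}, so your argument has to stand on its own. Its overall architecture is the standard one and most of it is correct: take an ergodic component $Z$ of $\sigma^{n}$, let $k$ be the minimal period of $Z$ under $\sigma$ modulo null sets, use Euclidean division to get $k\mid n$, show the translates $Z,\sigma^{-1}Z,\dots,\sigma^{-(k-1)}Z$ are pairwise disjoint $\sigma^{n}$-ergodic sets, and conclude from ergodicity of $\sigma$ that their union has full measure, whence each has measure $1/k$; the reduction of $n<0$ to $\sigma^{-1}$ is also fine. (Your transfer of ergodicity to the translates uses invertibility of $\sigma$, which the first part of the lemma does not assume, but since $\sigma$ is invertible throughout this paper that is harmless.)

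There is, however, a genuine gap at the very first step: you ``select an ergodic component $Z\in\mathcal{F}$ with $\mathbb{P}(Z)>0$'' as if the ergodic decomposition theorem supplied one. It does not: for a general non-ergodic transformation the decomposition can be continuous and every individual ergodic component can be $\mathbb{P}$-null (think of the identity map on $[0,1]$ with Lebesgue measure), so the existence of a positive-measure component is exactly the point that needs proof, and it is where the ergodicity of $\sigma$ must enter --- you only invoke it at the end. The missing argument is short: if $A$ is $\sigma^{n}$-invariant mod null with $\mathbb{P}(A)>0$, then $\bigcup_{j=0}^{n-1}\sigma^{-j}A$ is $\sigma$-invariant mod null, hence of full measure by ergodicity of $\sigma$, so $\mathbb{P}(A)\geq 1/n$. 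Therefore the $\sigma^{n}$-invariant $\sigma$-algebra is purely atomic with at most $n$ atoms, and any atom is a positive-measure set on which $\sigma^{n}$ acts ergodically; take $Z$ to be such an atom. With this inserted the rest of your proof goes through verbatim, and as a bonus the appeal to the Polish/standard Borel structure of $\Omega$ (and to the ergodic decomposition theorem altogether) becomes unnecessary.
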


\subsection{Admissible random Jab\l o\'{n}ski maps and quasi-compactness}

\begin{definition}
\label{repa} A partition $\mathcal{B}=\{B_{1},\dots,B_{q}\}$ of $I^{n}$ is
called \emph{rectangular }if for each $j=1,\dots,q$,
\[
B_{j}=%
{\displaystyle\prod\limits_{i=1}^{n}}
B_{ij}\text{,}%
\]
where $B_{ij}=[a_{ij},b_{ij})$ if $b_{ij}<1$ and $B_{ij}=[a_{ij},b_{ij}]$ if
$b_{ij}=1$.
\end{definition}

A piecewise map \ $f:I^{n}\circlearrowleft$ defined on the rectangular
partition given in Definition \ref{repa} is generally written as%
\[
f(x_{1},\dots,x_{n})=(\varphi_{1,j}(x_{1},\dots,x_{n}),\dots,\varphi
_{n,j}(x_{1},\dots,x_{n}))\text{,}%
\]
where $(x_{1},\dots,x_{n})\in B_{j}$, $j=1,\dots,q$. Following \cite{J}, we
next introduce Jab\l o\'{n}ski maps as a special case of such maps. We then
define random Jab\l o\'{n}ski maps and admissible random Jab\l o\'{n}ski maps.

\begin{definition}
[\emph{Jab\l o\'{n}ski }\cite{J}]\label{def:JabMap} A map $f:I^{n}%
\circlearrowleft$ is called a \emph{Jab\l o\'{n}ski map} if it is piecewise
defined on a rectangular partition $\mathcal{B}=\{B_{1},\dots,B_{q}\}$ of
$I^{n}$ and is given by the formula%
\[
f(x_{1},\dots,x_{n})=(\varphi_{1,j}(x_{1}),\dots,\varphi_{n,j}(x_{n}))\text{,}%
\]
where $(x_{1},\dots,x_{n})\in B_{j},$ $j=1,2,\dots,q$. The vertices of the
rectangles in $\mathcal{B}$ which lie in the interior of $I^{n}$ are called
the \emph{crossing points} of $f$. The real valued maps $\varphi_{i,j}%
:B_{ij}\rightarrow\lbrack0,1]$ are called the \emph{components} of $f$. We use
$\mathcal{J}$ to denote for the class of Jab\l o\'{n}ski maps on $I^{n}$.
\end{definition}

While the above family of Jab\l o\'{n}ski maps may seem restrictive, in
\cite{BLG}, Boyarsky, G\'{o}ra and Lou proved that for any piecewise $C^{2}$
map $f$ defined on a rectangular partition of $I^{n}$, $f$ can be approximated
by a sequence of piecewise $C^{2}$ Jab\l o\'{n}ski transformations. In other
words, there exists a sequence of Jab\l o\'{n}ski maps $f_{n}$ that converges
pointwise to $f.$ Moreover, the corresponding sequence of invariant densities
of $f_{n}$ (which exists by \cite{J}) converges weakly to an invariant density
of $f$. Generally speaking, Jab\l o\'{n}ski maps can be seen as the basis maps
for a much larger class of piecewise defined maps on the $n$ dimensional rectangle.

\begin{definition}
Let $(\Omega,\mathcal{F},\mathbb{P})$ be a probability space and
$\sigma:\Omega\circlearrowleft$ an invertible, ergodic and $\mathbb{P-}%
$preserving transformation. A \emph{random Jab\l o\'{n}ski map }%
$\mathscr{F}$\emph{ over }$\sigma$ is a map $\mathscr{F}:\Omega\rightarrow
\mathcal{J}$, where $f_{\omega}:=\mathscr{F(\omega
)}:I^{n}\circlearrowleft$. Hence, for each $\omega\in\Omega$, there exists a
rectangular partition $\mathcal{B}^{\omega}$ of $I^{n}$, say,
\[
\mathcal{B}^{\omega}=\{B_{1}^{\omega},\dots,B_{q_{\omega}}^{\omega}\}\text{,}%
\]
where $q_{\omega}$ is a positive integer. If $x=(x_{1},\dots,x_{n})\in
B_{j}^{\omega}$, where $j\in\{1,\dots,q_{\omega}\}$, then we have%
\[
f_{\omega}(x)=(\varphi_{\omega,1,j}(x_{1}),\dots,\varphi_{\omega,n,j}%
(x_{n}))\text{,}%
\]
where $B_{j}^{\omega}=%
{\displaystyle\prod\limits_{i=1}^{n}}
[a_{i}^{\omega,j},b_{i}^{\omega,j})$ and $\varphi_{\omega,i,j}$ is a map from
$[a_{i}^{\omega,j},b_{i}^{\omega,j}]$ into $[0,1]$. For $k\in%
\mathbb{N}
$, the $k$ fold composition $f_{\omega}^{(k)}$ is defined as\
\begin{equation}
f_{\omega}^{(k)}:=f_{\sigma^{k-1}\omega}\circ\dots\circ f_{\sigma\omega}\circ
f_{\omega}\text{.}%
\end{equation}

\end{definition}

For simplicity, we sometimes refer to the range of $\mathscr{F}$, that is
$\{f_{\omega}\}_{\omega\in\Omega}$, as the random Jab\l o\'{n}ski map. A
random Jab\l o\'{n}ski map gives rise to a random dynamical system, where
$\mathcal{X}=BV(I^{n})$ and $\mathcal{L}_{\omega}=\mathcal{L}_{f_{\omega}}$.
The next proposition proves that for all $\omega\in\Omega$, $\mathcal{L}%
_{\omega}$ is a bounded operator on $BV(I^{n})$.

\begin{proposition}
\label{BVbded}For all $\omega\in\Omega$, $\mathcal{L}_{\omega}$ is a bounded
operator on $BV(I^{n})$.
\end{proposition}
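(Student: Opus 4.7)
The plan is to exploit the coordinate-separating structure of Jabłoński maps to reduce the boundedness of $\mathcal{L}_\omega$ on $BV(I^n)$ to a one-dimensional Lasota--Yorke type estimate along each coordinate axis, integrated over the orthogonal hyperplane. Since $f_\omega$ acts on each $B_j^\omega = \prod_i [a_i^{\omega,j}, b_i^{\omega,j})$ by $(x_1,\dots,x_n) \mapsto (\varphi_{\omega,1,j}(x_1), \dots, \varphi_{\omega,n,j}(x_n))$, the Jacobian determinant factors as $\prod_i |\varphi'_{\omega,i,j}(x_i)|$, and the change-of-variables formula for the transfer operator yields the explicit expression
\[
\mathcal{L}_\omega h(y) = \sum_{j=1}^{q_\omega} \frac{h\bigl(\varphi_{\omega,1,j}^{-1}(y_1), \dots, \varphi_{\omega,n,j}^{-1}(y_n)\bigr)}{\prod_{i=1}^n \bigl|\varphi'_{\omega,i,j}\bigl(\varphi_{\omega,i,j}^{-1}(y_i)\bigr)\bigr|} \, \mathbf{1}_{f_\omega(B_j^\omega)}(y),
\]
valid when the components $\varphi_{\omega,i,j}$ are monotonic and differentiable on $[a_i^{\omega,j}, b_i^{\omega,j}]$, as is the standing assumption on $\mathcal{J}$.

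First I would handle the $L^1$ part, which is free: $\mathcal{L}_\omega$ is the transfer operator of a nonsingular map and is therefore an $L^1$-contraction, giving $\|\mathcal{L}_\omega h\|_1 \le \|h\|_1$. Next, for the total variation, I would fix a direction $i \in \{1,\dots,n\}$ and study $\mathcal{L}_\omega h$ along one-dimensional fibers parallel to the $i$-th axis, with the remaining coordinates frozen. Along each such fiber, the explicit formula above restricts to a sum whose $j$-th term is, up to the characteristic function of the interval $\varphi_{\omega,i,j}(B_{ij}^\omega)$, the one-dimensional transfer operator of $\varphi_{\omega,i,j}$ applied to the $i$-th slice of $h$. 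The classical one-dimensional variation estimate for piecewise monotonic maps, applied slice by slice, then produces a pointwise (in the other coordinates) bound of the form
\[
\underset{i}{\mathbf{V}}(\mathcal{L}_\omega h)(\hat y_i) \le C_1(\omega)\, \underset{i}{\mathbf{V}}(h)(\hat y_i) + C_2(\omega) \cdot \bigl(\text{boundary contributions of } h \text{ along the fiber}\bigr),
\]
where $\hat y_i$ denotes the remaining coordinates and the constants depend on $\omega$ through $q_\omega$ and the pointwise values of $|\varphi'_{\omega,i,j}|$ (finite by the assumed regularity) but not on $h$.

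The next step is to integrate over $P_i(I^n)$ in the sense of Tonelli and Cesari. The left-hand side yields $\underset{i}{\overset{I^n}{\mathbb{V}}}\mathcal{L}_\omega h$, the first term on the right yields $C_1(\omega) \underset{i}{\overset{I^n}{\mathbb{V}}}h$, and after maximizing over $i$ and combining with the $L^1$ bound one obtains $\|\mathcal{L}_\omega h\|_{BV} \le C(\omega)\|h\|_{BV}$, with $C(\omega)$ finite for each fixed $\omega$.

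The main obstacle is the second term: the boundary contributions produced when the characteristic functions $\mathbf{1}_{f_\omega(B_j^\omega)}$ are cut by the fiber. These contributions take the form of traces of $h$ on the $(n-1)$-dimensional faces of the pieces $B_j^\omega$ transverse to the $i$-th direction, which do not obviously belong to $L^1$ of the fiber-parameter. The standard way around this, and the technical heart of the argument, is to invoke the slicing characterization of Tonelli--Cesari variation (Corollary~3.49 in \cite{AFP} together with Remark~1.12 in \cite{G}): the total mass of such hyperplane traces is controlled by $\overset{I^n}{\mathbb{V}}h$ itself. Since $q_\omega$ is finite for each $\omega$ and the faces are axis-aligned (so the trace is taken transverse to exactly one coordinate direction at a time, matching the one being varied), the number of such boundary terms is finite and each can be absorbed into $\|h\|_{BV}$, completing the bound.
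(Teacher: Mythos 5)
Your proposal is correct in substance, but it takes a more self-contained route than the paper: the paper's entire proof consists of the $L^1$-contraction step (identical to yours) followed by a citation of the Lasota--Yorke inequality from the proof of Theorem 1 in \cite{J}, which gives $\overset{I^{n}}{\mathbb{V}}\mathcal{L}_{\omega}h\leq\alpha\Vert h\Vert_{1}+\beta\overset{I^{n}}{\mathbb{V}}h$ for some finite $\alpha,\beta>0$ depending on $f_{\omega}$, whence $\Vert\mathcal{L}_{\omega}h\Vert_{BV}\leq\max(1+\alpha,\beta)\Vert h\Vert_{BV}$. What you do instead is essentially rederive that deterministic inequality by slicing, and your scheme (explicit formula for $\mathcal{L}_{\omega}$, fiberwise one-dimensional estimate, change of variables by $\varphi_{\omega,i,j}^{-1}$, integration over $P_{i}(I^{n})$) is precisely the machinery the paper deploys later, following \cite{P}, in the proof of Theorem \ref{quco}; so your route buys independence from \cite{J} at the cost of repeating work done there. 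Two points of care if you write it out: (i) the control of the boundary/trace terms is not supplied by Corollary 3.49 of \cite{AFP} (that is the compact embedding $BV\hookrightarrow L^{1}$) nor by Remark 1.12 of \cite{G} (completeness of $BV$); the correct, elementary mechanism---used after Equation (5) of \cite{P} and in the paper's Theorem \ref{quco}---is that for a good representative $h_{i}$ realizing the Tonelli--Cesari infimum, and for $h\geq0$ on the dense class $\mathcal{E}$, the endpoint values along a fiber are bounded by the fiber variation plus twice the fiber integral, so after integrating over $P_{i}(I^{n})$ the traces are absorbed into $\Vert h\Vert_{BV}$ (as you indeed state at the end), not into $\overset{I^{n}}{\mathbb{V}}h$ alone; (ii) since the Tonelli--Cesari variation involves an infimum over a.e.-representatives, the slice-by-slice argument must be carried out on such representatives over the dense class $\mathcal{E}$ and then extended by density, exactly as the paper does for its random Lasota--Yorke inequality. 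With those standard adjustments your outline closes.
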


\begin{proof}
Let $\omega\in\Omega$,
\begin{align*}
\Vert\mathcal{L}_{\omega}h\Vert_{BV}  &  =%
{\displaystyle\int\limits_{I^{n}}}
|\mathcal{L}_{\omega}h|dm+\overset{I^{n}}{\mathbb{V}}\mathcal{L}_{\omega}h\\
&  \leq%
{\displaystyle\int\limits_{I^{n}}}
\mathcal{L}_{\omega}|h|dm+\overset{I^{n}}{\mathbb{V}}\mathcal{L}_{\omega}h\\
&  =%
{\displaystyle\int\limits_{I^{n}}}
|h|dm+\overset{I^{n}}{\mathbb{V}}\mathcal{L}_{\omega}h\\
&  =\Vert h\Vert_{1}+\overset{I^{n}}{\mathbb{V}}\mathcal{L}_{\omega}h
\end{align*}
By the Lasota-Yorke inequality provided in the proof of Theorem $1$ in
\cite{J}\textbf{,} the last term is less than or equal%
\[
\Vert h\Vert_{1}+\alpha\Vert h\Vert_{1}+\beta\overset{A}{\mathbb{V}}h\leq
\max(1+\alpha,\beta)\Vert h\Vert_{BV}\text{.}%
\]
for some $\alpha,\beta>0$.
\end{proof}

We note that
\begin{equation}
\mathcal{L}_{\omega}^{(k)}=\mathcal{L}_{f_{\sigma^{k-1}\omega}\circ\dots\circ
f_{\sigma\omega}\circ f_{\omega}}=\mathcal{L}_{\sigma^{k-1}\omega}\circ
\dots\circ\mathcal{L}_{\sigma\omega}\circ\mathcal{L}_{\omega}\text{.}
\label{toc}%
\end{equation}
In what follows, we will use the notation $\overset{\rightarrow}{r}%
=(r_{1},\dots,r_{n})\in%
\mathbb{N}
^{n}$ and denote the set of vector indices by
\[%
\mathbb{Z}
_{_{\overset{\rightarrow}{r}}}:=\{\overset{\rightarrow}{s}=(s_{1},s_{2}%
,\dots,s_{n}):\text{ }1\leq s_{i}\leq r_{i}\}\text{.}%
\]

\begin{remark}
\label{re1} If the random Jab\l o\'{n}ski map $\mathscr{F}$ has a finite
range, then for each $k_{0}\in%
\mathbb{N}
$, there exists a common partition $\mathcal{B}=\mathcal{B}(k_{0})$ of $I^{n}$
into maximal rectangles such that the components of the maps $\{f_{\omega
}^{(k_{0})}\}_{\omega\in\Omega}$ are $C^{2}$ and monotonic on their interval domains.
\end{remark}

\begin{remark}
\label{re2} The generator $\mathcal{L}:\Omega\mathcal{\rightarrow
}L(\mathcal{X},\mathcal{X})$ of the random dynamical system generated by a
random Jab\l o\'{n}ski map $\mathscr{F}$ is $\mathbb{P}$-continuous if its
range is at most countably infinite (consisting of, say, $f_{1},f_{2},\dots$)
and the preimage of each $f_{j}$ is a measurable set. Our results will be
valid when there exists a common partition $\mathcal{B}$ of $I^{n}$ into
rectangles such that the components of the maps $\{f_{\omega}^{(N)}%
\}_{\omega\in\Omega}$ are $C^{2}$ and monotonic on their interval domains,
where $N\in%
\mathbb{N}
$ satisfies the condition given in \eqref{Nc}. In this case, for each
$i=1,\dots,n$, there exists a partition
\[
0=a_{i,0}<a_{i,1}<\dots<a_{i,r_{i}}=1\text{,}%
\]
for some $r_{i}\in%
\mathbb{N}
$. Let $B_{s_{i}}=$ $[a_{i,s_{i}-1},a_{i,s_{i}})$ when $s_{i}=1,2,\dots
,r_{i}-1$ and $B_{r_{i}}=$ $[a_{i,r_{i}-1},a_{i,r_{i}}]$. For each vector
index $\overset{\rightarrow}{s}\in%
\mathbb{Z}
_{_{\overset{\rightarrow}{r}}}$, we denote the $n$ dimensional rectangle by
$B_{\overset{\rightarrow}{s}}=%
{\displaystyle\prod\limits_{i=1}^{n}}
B_{s_{i}}$. The common rectangular partition is given by%
\[
\mathcal{B}=\{B_{\overset{\rightarrow}{s}}:\text{ }\overset{\rightarrow}{s}\in%
\mathbb{Z}
_{_{\overset{\rightarrow}{r}}}\}\text{.}%
\]
For each $\omega\in\Omega$ and $\overset{\rightarrow}{s}\in%
\mathbb{Z}
_{_{\overset{\rightarrow}{r}}}$, we write the map $f_{\omega}$ with respect to
$\mathcal{B}$ as%
\[
f_{\omega}(x)=(\varphi_{\omega,1,\overset{\rightarrow}{s}}(x_{1}%
),\dots,\varphi_{\omega,n,\overset{\rightarrow}{s}}(x_{n}))\text{, }%
x=(x_{1},\dots,x_{n})\in B_{\overset{\rightarrow}{s}}\text{,}%
\]
and, for each $k\in%
\mathbb{N}
$, the map $f_{\omega}^{(k)}$ as%
\[
f_{\omega}^{(k)}(x)=(\varphi_{\omega,1,k,\overset{\rightarrow}{s}}%
(x_{1}),\dots,\varphi_{\omega,n,k,\overset{\rightarrow}{s}}(x_{n}%
))\text{,\ }x=(x_{1},\dots,x_{n})\in B_{\overset{\rightarrow}{s}}\text{.}%
\]

\end{remark}

\begin{remark}
\label{re3} One can associate to the random Jab\l o\'{n}ski map
$\mathscr{F}=\{f_{\omega}\}_{\omega\in\Omega}$, the skew product map $F$ on
$\Omega\times I^{n}$ which encodes the dynamics of the whole system%
\begin{equation}
F(\omega,x)=(\sigma\omega,f_{\omega}(x))\text{.} \label{sp}%
\end{equation}

\end{remark}



Expanding properties for dynamical systems lead to chaotic behavior of the
orbits. However, they usually give rise to good ergodic properties like the
existence of absolutely continuous invariant measures. Next we introduce the
admissible random Jab\l o\'{n}ski maps. This definition involves a formulation
of an expanding-on-average condition.

\begin{definition}
\label{arjm} Using the notation in Remark \ref{re2}, a random Jab\l o\'{n}ski
map $\mathscr{F}$ is called \emph{admissible} if all the components
$\varphi_{\omega,i,\overset{\rightarrow}{s}}$ are $C^{2}$ and monotonic on
$[a_{i,s_{i}-1},a_{i,s_{i}}]$ and there exists a constant $\gamma>0$ such
that
\begin{equation}
\Gamma:=\int_{\Omega}\underset{i=1,\dots,n}{\min} \log(\mathbb{\gamma}%
_{i}(\omega))d\mathbb{P(\omega)>\gamma}\text{,} \label{ec}%
\end{equation}
where%
\begin{equation}
\mathbb{\gamma}_{i}(\omega):=\inf_{_{\substack{\overset{\rightarrow}{s}\in%
\mathbb{Z}
_{_{\overset{\rightarrow}{r}}}\\x_{i}\in\lbrack a_{i,s_{i}-1},a_{i,s_{i}}]}%
}}(|\varphi_{\omega,i,\overset{\rightarrow}{s}}^{\prime}(x_{i})|)\text{.}
\label{fec}%
\end{equation}
In addition, we assume the mapping $\omega\mapsto\mathcal{L}_{\omega}$ is
$\mathbb{P}$-continuous.
\end{definition}

\section{Random Lasota-Yorke inequality and quasi-compactness}

\label{S:3}

In the next theorem, we establish a suitable Lasota-Yorke inequality on the
space of bounded variation $BV(I^{n})$ and we use it to prove the
quasi-compactness property for admissible random Jab\l o\'{n}ski maps.

\begin{theorem}
\label{quco} Let $\mathscr{F}=\{f_{\omega}\}_{\omega\in\Omega}$ be an
admissible random Jab\l o\'{n}ski map. Then:

(i) the random dynamical system generated by $\mathscr{F}$ is quasi-compact; and

(ii) its maximal Lyapunov exponent $\lambda^{\ast}$ is zero.
\end{theorem}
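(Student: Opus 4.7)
The plan is to derive a random Lasota-Yorke inequality on $BV(I^{n})$ that governs the cocycle $\{\mathcal{L}_{\omega}^{(k)}\}$, and then extract quasi-compactness (i) and the identification $\lambda^{\ast}=0$ (ii) from it. First I would track the dependence on $\omega$ in Jab\l o\'{n}ski's deterministic computation \cite{J} (already invoked in Proposition~\ref{BVbded}): writing $\mathcal{L}_{\omega}h$ on each cell of $\mathcal{B}^{\omega}$ via the coordinate-wise change of variables $y_{i}=\varphi_{\omega,i,\vec{s}}(x_{i})$ and summing directional-variation contributions across cells, I expect an inequality of the form
$$\overset{I^{n}}{\mathbb{V}}\mathcal{L}_{\omega}h\;\le\;\frac{C}{\min_{i}\mathbb{\gamma}_{i}(\omega)}\,\overset{I^{n}}{\mathbb{V}}h\;+\;M(\omega)\,\|h\|_{1},$$
with a dimensional constant $C$ and a measurable $M(\omega)$ depending on the geometry of $\mathcal{B}^{\omega}$ and the $C^{2}$ bounds on the components. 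Iterating along the cocycle and using $\|\mathcal{L}_{\omega}h\|_{1}\le\|h\|_{1}$ would then give, for each $N\in\mathbb{N}$,
$$\overset{I^{n}}{\mathbb{V}}\mathcal{L}_{\omega}^{(N)}h\;\le\;A_{N}(\omega)\,\overset{I^{n}}{\mathbb{V}}h+D_{N}(\omega)\,\|h\|_{1},\qquad A_{N}(\omega):=\prod_{k=0}^{N-1}\frac{C}{\min_{i}\mathbb{\gamma}_{i}(\sigma^{k}\omega)},$$
with $D_{N}(\omega)$ an explicit sum built from $M$ and partial products of the $A_{k}$'s.

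Next I would apply Birkhoff's ergodic theorem to $\log\min_{i}\mathbb{\gamma}_{i}(\cdot)$. Together with the admissibility hypothesis \eqref{ec} (whose constant $\gamma$ is to be chosen large enough to absorb $\log C$ from the step above), this gives
$$\lim_{N\to\infty}\frac{1}{N}\log A_{N}(\omega)\;=\;\log C-\Gamma\;<\;0$$
for $\mathbb{P}$-a.e.\ $\omega$, while a routine estimate on $D_{N}(\omega)$, bounding the sum by $N$ times a maximum and using that the partial sums of $\log A$ drift to $-\infty$ linearly, yields $\frac{1}{N}\log D_{N}(\omega)\to 0$. Part (ii) then splits into two elementary bounds: $\|\mathcal{L}_{\omega}^{(N)}h\|_{BV}\le(1+A_{N}(\omega)+D_{N}(\omega))\|h\|_{BV}$ produces $\lambda^{\ast}\le 0$; conversely, testing against $\mathbf{1}\in BV(I^{n})$ and using that $\mathcal{L}_{\omega}^{(N)}\mathbf{1}\ge 0$ with $\|\mathcal{L}_{\omega}^{(N)}\mathbf{1}\|_{1}=1$ forces $\|\mathcal{L}_{\omega}^{(N)}\|_{BV\to BV}\ge 1$, hence $\lambda^{\ast}\ge 0$.

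For quasi-compactness I plan to use a random Hennion-type argument: the compact embedding $BV(I^{n})\hookrightarrow L^{1}(I^{n})$ from Corollary 3.49 of \cite{AFP} combined with the LY inequality lets $\mathcal{L}_{\omega}^{(N)}$ restricted to the unit ball of $BV$ be approximated in $BV$-norm by a finite-rank operator up to error $A_{N}(\omega)$, so $\|\mathcal{L}_{\omega}^{(N)}\|_{ic(BV)}\le A_{N}(\omega)$; applying Proposition~\ref{kjn} and the above Birkhoff limit to $\mathcal{R}^{(N)}$ yields $\mathcal{K}^{\ast}\le\log C-\Gamma<0=\lambda^{\ast}$, which is exactly quasi-compactness.

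The main obstacle will be the first step: producing a one-step random LY inequality whose multiplicative constant $C$ is small enough that, after Birkhoff averaging of the iterated contractions, the product truly contracts almost surely. Equivalently, one must confirm that the admissibility constant $\gamma$ in \eqref{ec} can be chosen to dominate the geometric constant arising from summing across rectangles in Jab\l o\'{n}ski's calculation. Controlling the measurable dependence of $M(\omega)$ under only $\mathbb{P}$-continuity of $\omega\mapsto\mathcal{L}_{\omega}$ is a secondary technical point; once the LY inequality is in hand, the remainder is a standard combination of Birkhoff's theorem, Hennion's trick, and the $L^{1}$-non-expansion of transfer operators.
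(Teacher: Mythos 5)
Your overall architecture (random Lasota--Yorke inequality $\Rightarrow$ index of compactness via the compact embedding $BV(I^{n})\hookrightarrow L^{1}$ $\Rightarrow$ quasi-compactness, plus the two elementary bounds $\lambda^{\ast}\ge 0$ from testing on $\mathbf{1}$ and $\lambda^{\ast}\le 0$ from iterating the inequality) is exactly the paper's, and those pieces are sound. But the step you yourself flag as ``the main obstacle'' is a genuine gap, not a technical point to be confirmed later, and your proposed resolution --- ``choose $\gamma$ large enough to absorb $\log C$'' --- is not available. The constant $\gamma$ in \eqref{ec} is not a free parameter: admissibility only asserts $\Gamma>\gamma$ for \emph{some} $\gamma>0$, i.e.\ $\Gamma>0$. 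If you first prove a one-step inequality with multiplicative coefficient $C/\min_{i}\gamma_{i}(\omega)$ and then iterate, the $N$-step coefficient is $A_{N}(\omega)=C^{N}/\prod_{k}\min_{i}\gamma_{i}(\sigma^{k}\omega)$, so Birkhoff gives $\tfrac1N\log A_{N}\to\log C-\Gamma$, which is negative only under the strictly stronger hypothesis $\Gamma>\log C$. With the paper's explicit computation one gets $C=3$, so your route proves the theorem only for maps with $\Gamma>\log 3$, excluding a large class of admissible maps with small average expansion.

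The fix is to block \emph{before} running the Lasota--Yorke computation, not after. Choose $N$ with $N\gamma>\log 3$ first; then $f_{\omega}^{(N)}$ is again a single Jab\l o\'{n}ski map on a common refined rectangular partition (Remark~\ref{re2}), and the Jab\l o\'{n}ski/Pelikan estimate applied directly to $\mathcal{L}_{\omega}^{(N)}$ yields
\begin{equation*}
\overset{I^{n}}{\mathbb{V}}\mathcal{L}_{\omega}^{(N)}h\le 3\,\rho_{\omega,i,N}\,\overset{I^{n}}{\mathbb{V}}h+\alpha_{2}(\omega)\Vert h\Vert_{1},
\end{equation*}
where $\rho_{\omega,i,N}$ is the reciprocal of the $N$-fold composed expansion: the geometric constant stays $3$ (it does not depend on the number of rectangles in the refined partition) while the expansion compounds to roughly $e^{N\Gamma}$. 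Hence $\int_{\Omega}\log\alpha_{1}\,d\mathbb{P}\le\log 3-N\Gamma<\log 3-N\gamma<0$ for \emph{any} admissible map. Your iteration of the one-step inequality destroys exactly this gain by raising $C$ to the $N$-th power. Secondary points: your claim $\tfrac1N\log D_{N}(\omega)\to 0$ needs the fiberwise additive term to be made uniformly bounded (the paper does this via a hybrid inequality following \cite[Lemma C.5]{CQ} and \cite[Proposition 1.4]{B}); and the ergodic-decomposition issue for $\sigma^{N}$ (Lemma~\ref{singmanergodic}) must be invoked before applying Proposition~\ref{kjn}, since $\sigma^{N}$ need not be ergodic.
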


\begin{proof}
[Proof of Theorem \ref{quco} (i)]The first step is to show that there are
$N\in%
\mathbb{N}
$ and positive measurable functions $\alpha_{1},\alpha_{2}:\Omega\rightarrow%
\mathbb{R}
^{+}$such that $\int_{\Omega}\log\alpha_{1}(\omega)d\mathbb{P(\omega)<}0$ and%
\begin{equation}
\overset{I^{n}}{\mathbb{V}}\mathcal{L}_{\omega}^{(N)}h\leq\alpha_{1}%
(\omega)\overset{I^{n}}{\mathbb{V}}h+\alpha_{2}(\omega)\Vert h\Vert
_{1}\text{,}\label{lyi}%
\end{equation}
for all $h$\textbf{$\in$}$BV(I^{n})$, where $\mathcal{L}_{\omega}^{(N)}$ is
defined in \eqref{toc}. Let $x=(x_{1},\dots,x_{n})\in I^{n}$ and $\omega
\in\Omega$, choose $N\in%
\mathbb{N}
$ such that
\begin{equation}
N\gamma>\log(3)\text{,}\label{Nc}%
\end{equation}
where $\gamma$ satisfies the condition \eqref{ec}. Let $\overset{\rightarrow
}{s_{0}}=\overset{\rightarrow}{s}$ be the label of the unique rectangle in
$\mathcal{B}$ for which $x\in B_{\overset{\rightarrow}{s}}$ and for
$k=1,2,\dots$, let $\overset{\rightarrow}{s_{k}}\in%
\mathbb{Z}
_{\overset{\rightarrow}{r}}$ be such that
\begin{align*}
f_{\omega}^{(k)}(x) &  =(\varphi_{\omega,1,k,\overset{\rightarrow}{s}}%
(x_{1}),\dots,\varphi_{\omega,n,k,\overset{\rightarrow}{s}}(x_{n}))\\
&  =\Big(\varphi_{\sigma^{k-1}\omega,1,\overset{\rightarrow}{s_{k-1}}}%
\circ\dots\circ\varphi_{\omega,1,\overset{\rightarrow}{s}}(x_{1}%
),\dots,\varphi_{\sigma^{k-1}\omega,n,\overset{\rightarrow}{s_{k-1}}}%
\circ\dots\circ\varphi_{\omega,n,\overset{\rightarrow}{s}}(x_{n})\Big)\\
&  \in B_{\overset{\rightarrow}{s_{k}}}\text{.}%
\end{align*}
Note that for any $i=1,2,\dots,n$,%
\begin{align}
&  \int_{\Omega}\inf_{_{\substack{\overset{\rightarrow}{s}\in%
\mathbb{Z}
_{\overset{\rightarrow}{r}}\\x_{i}\in\lbrack a_{i,s_{i}-1},a_{i,s_{i}}]}}}%
\log(|\varphi_{\omega,i,N,\overset{\rightarrow}{s}}^{\prime}(x_{i}%
)|)d\mathbb{P(\omega)}\label{lolololo}\\
&  =\int_{\Omega}\inf_{_{\substack{\overset{\rightarrow}{s}\in%
\mathbb{Z}
_{\overset{\rightarrow}{r}}\\x_{i}\in\lbrack a_{i,s_{i}-1},a_{i,s_{i}}]}}}%
\log\Big(|(\varphi_{\sigma^{N-1}\omega,i,\overset{\rightarrow}{s_{N-1}}}%
\circ\dots\circ\varphi_{\sigma\omega,i,\overset{\rightarrow}{s_{1}}}%
\circ\varphi_{\omega,i,\overset{\rightarrow}{s}})^{\prime}(x_{i}%
)|\Big)d\mathbb{P(\omega)}\nonumber\\
&  \mathbb{\geq}%
{\displaystyle\sum\limits_{k=0}^{N-1}}
\int_{\Omega}\inf_{_{\substack{\overset{\rightarrow}{s}\in%
\mathbb{Z}
_{\overset{\rightarrow}{r}}\\x_{i}\in\lbrack a_{i,s_{i}-1},a_{i,s_{i}}]}}}%
\log\Big(|\varphi_{\sigma^{k}(\omega),i,\overset{\rightarrow}{s_{k}}}^{\prime
}(\varphi_{\omega,i,k-1,\overset{\rightarrow}{s_{k-1}}}(x_{i}%
))|\Big)d\mathbb{P(\omega)}\nonumber\\
&  \geq%
{\displaystyle\sum\limits_{k=0}^{N-1}}
\int_{\Omega}\log(\mathbb{\gamma}_{i}(\omega))d\mathbb{P(\omega)\geq}%
N\gamma>\log(3)\text{.}\nonumber
\end{align}

Let $\mathcal{E}$ be the set of functions of the form $g=%
{\displaystyle\sum\limits_{j=1}^{M}}
g_{j}\mathcal{X}_{A_{j}}\text{,}$ where $A_{j}=%
{\displaystyle\prod\limits_{i=1}^{n}}
[\alpha_{i}^{j},\beta_{i}^{j}]\subseteq I^{n}$ and $g_{j}$:$I^{n}\rightarrow%
\mathbb{R}
$ is a $C^{1}$ function on $A_{j}$, By \cite[Remark $1$]{J}, $\mathcal{E}$
forms a dense subset of the space $L^{1}(I^{n})$. By \cite[Remark $5$]{J},
$\mathcal{E}\subset BV(I^{n})$.

We argue in a similar way to the proof of Theorem $1$ in \cite{J}. We provide
the Lasota-Yorke inequality on elements of $\mathcal{E}$. Since the BV norm is
a continuous function and by Proposition \ref{BVbded} the transfer operator is
bounded, using a density argument, the inequality can be extended to elements
of $BV(I^{n})$.

Let $h\in\mathcal{E}$ be such that $h\geq0$ and for any $i=1,\dots,n$, let
$h_{i}\in\mathcal{E}$ be such that $h_{i}=h$ Lebesgue almost everywhere with
the property%
\[
\int_{P_{i}(I^{n})}\underset{i}{\overset{I^{n}}{\mathbf{V}}}h_{i}%
dm=\underset{i}{\overset{I^{n}}{\mathbb{V}}}h\text{.}%
\]
Let
\begin{align*}
\Psi_{\omega,i,N,\overset{\rightarrow}{s}}  &  :=\varphi_{\omega
,i,N,\overset{\rightarrow}{s}}^{-1}\text{,}\\
\delta_{\omega,i,N,\overset{\rightarrow}{s}}  &  :=|\Psi_{\omega
,i,N,\overset{\rightarrow}{s}}^{\prime}|\text{,}\\
I_{\omega,N,\overset{\rightarrow}{s}}  &  :=%
{\displaystyle\prod\limits_{i=1}^{n}}
\varphi_{\omega,i,N,\overset{\rightarrow}{s}}([a_{i,s_{i}-1},a_{i,s_{i}%
}])\text{.}%
\end{align*}
The transfer operator $\mathcal{L}_{\omega}^{(N)}$ applied to $h$ evaluated at
$x=(x_{1},\dots,x_{n})\in I^{n}$ is given by%
\[
\mathcal{L}_{\omega}^{(N)}h(x)=%
{\displaystyle\sum\limits_{\overset{\rightarrow}{s}\in\mathbb{Z}%
_{_{\overset{\rightarrow}{r}}}}}
h\Big(\Psi_{\omega,1,N,\overset{\rightarrow}{s}}(x_{1}),\dots,\Psi
_{\omega,n,N,\overset{\rightarrow}{s}}(x_{n})\Big)%
{\displaystyle\prod\limits_{j=1}^{n}}
\delta_{\omega,j,N,\overset{\rightarrow}{s}}(x_{j})1_{I_{\omega,N,\overset
{\rightarrow}{s}}}(x)\text{.}%
\]
If we apply$\underset{i}{\overset{I^{n}}{\mathbf{V}}}$ for the the
$L^{1}(I^{n})$-function $\mathcal{L}_{\omega}^{(N)}h_{i}$ and the take the
integral over $P_{i}(I^{n})$, we get%
\begin{equation}
\int_{P_{i}(I^{n})}\underset{i}{\overset{I^{n}}{\mathbf{V}}}\mathcal{L}%
_{\omega}^{(N)}h_{i}dm\leq I_{1}+I_{2}\text{,} \label{bound}%
\end{equation}
where%
\begin{align*}
I_{1}  &  =%
{\displaystyle\sum\limits_{\overset{\rightarrow}{s}\in\mathbb{Z}%
_{_{\overset{\rightarrow}{r}}}}}
\int_{P_{i}(I_{\omega,N,\overset{\rightarrow}{s}})}\underset{i}{\overset
{I_{\omega,N,\overset{\rightarrow}{s}}}{\mathbf{V}}}h_{i}\Big(\Psi
_{\omega,1,N,\overset{\rightarrow}{s}}(x_{1}),\dots,\Psi_{\omega
,n,N,\overset{\rightarrow}{s}}(x_{n})\Big)%
{\displaystyle\prod\limits_{j=1}^{n}}
\delta_{\omega,j,N,\overset{\rightarrow}{s}}(x_{j})dm\text{,}\\
I_{2}  &  =%
{\displaystyle\sum\limits_{\overset{\rightarrow}{s}\in\mathbb{Z}%
_{_{\overset{\rightarrow}{r}}}}}
\int_{P_{i}(I_{\omega,N,\overset{\rightarrow}{s}})}\Big(|h_{i}(\Psi
_{\omega,1,N,\overset{\rightarrow}{s}}(x_{1}),\dots,\Psi_{\omega
,n,N,\overset{\rightarrow}{s}}(x_{n}))|\delta_{\omega,i,N,\overset
{\rightarrow}{s}}(\varphi_{\omega,i,N,\overset{\rightarrow}{s}}(a_{i,s_{i}%
}))\\
&  +|h_{i}(\Psi_{\omega,1,N,\overset{\rightarrow}{s}}(x_{1}),\dots
,\Psi_{\omega,n,N,\overset{\rightarrow}{s}}(x_{n}))|\delta_{\omega
,i,N,\overset{\rightarrow}{s}}(\varphi_{\omega,i,N,\overset{\rightarrow}{s}%
}(a_{i,s_{i}-1}))\Big)%
{\displaystyle\prod\limits_{\overset{j=1}{j\neq i}}^{n}}
\delta_{j}^{\omega,N,\overset{\rightarrow}{s}}(x_{j})dm\text{.}%
\end{align*}
Let%
\begin{equation}
\rho_{\omega,i,N}=\underset{\overset{\rightarrow}{s}\in%
\mathbb{Z}
_{\overset{\rightarrow}{r}}}{\sup}\delta_{\omega,i,N,\overset{\rightarrow}{s}%
}\text{,} \label{yaraaa}%
\end{equation}

and
\[
K_{\omega,i,N}=\frac{\underset{\overset{\rightarrow}{s}\in%
\mathbb{Z}
_{\overset{\rightarrow}{r}}}{\sup}\delta_{\omega,i,N,\overset{\rightarrow}{s}%
}^{\prime}}{\underset{\overset{\rightarrow}{s}\in%
\mathbb{Z}
_{\overset{\rightarrow}{r}}}{\inf}\delta_{\omega,i,N,\overset{\rightarrow}{s}%
}}+\underset{\overset{\rightarrow}{s}\in%
\mathbb{Z}
_{\overset{\rightarrow}{r}}}{\sup}\delta_{\omega,i,N,\overset{\rightarrow}{s}%
}\text{.}%
\]

These constants are motivated from the ones given in Theorem $1$ in \cite{P}
also Theorem $2$ in \cite{BL}.\ By Inequality $7$ in \cite{P} adapted to our
notation, we have%

\begin{align*}
I_{1}  &  \leq2\rho_{\omega,i,N}%
{\displaystyle\sum\limits_{\overset{\rightarrow}{s}\in\mathbb{Z}%
_{P_{i}(\overset{\rightarrow}{r})}}}
\int_{P_{i}(I_{\omega,N,\overset{\rightarrow}{s}})}\underset{i}{\overset
{P_{i}(I_{\omega,N,\overset{\rightarrow}{s}})\times I}{\mathbf{V}}}%
h_{i}\Big(\Psi_{\omega,1,N,\overset{\rightarrow}{s}}(x_{1}),\dots,x_{i}%
,\dots,\Psi_{\omega,n,N,\overset{\rightarrow}{s}}(x_{n})\Big)\\
&
{\displaystyle\prod\limits_{\overset{j=1}{j\neq i}}^{n}}
\delta_{\omega,j,N,\overset{\rightarrow}{s}}(x_{j})dm\\
&  +K_{\omega,i,N}%
{\displaystyle\sum\limits_{\overset{\rightarrow}{s}\in\mathbb{Z}%
_{P_{i}(\overset{\rightarrow}{r})}}}
\int_{P_{i}(I_{\omega,N,\overset{\rightarrow}{s}})}%
{\displaystyle\int\limits_{0}^{1}}
h_{i}\Big(\Psi_{\omega,1,N,\overset{\rightarrow}{s}}(x_{1}),\dots,x_{i}%
,\dots,\Psi_{\omega,n,N,\overset{\rightarrow}{s}}(x_{n})\Big)dx_{i}\\
&
{\displaystyle\prod\limits_{\overset{j=1}{j\neq i}}^{n}}
\delta_{\omega,j,N,\overset{\rightarrow}{s}}(x_{j})dm\text{.}%
\end{align*}

Making the change of variables $U=\Psi^{-1}$, as in Lemma $3$ in \cite{P},
then the above sum is equal to%

\begin{align*}
&  2\rho_{\omega,i,N}%
{\displaystyle\sum\limits_{\overset{\rightarrow}{s}\in\mathbb{Z}%
_{P_{i}(\overset{\rightarrow}{r})}}}
\int_{P_{i}(B_{\overset{\rightarrow}{s}})}\underset{i}{\overset{P_{i}%
(B_{\overset{\rightarrow}{s}})\times I}{\mathbf{V}}}h_{i}(x_{1},\dots
,x_{n})dm\\
&  +K_{\omega,i,N}%
{\displaystyle\sum\limits_{\overset{\rightarrow}{s}\in\mathbb{Z}%
_{P_{i}(\overset{\rightarrow}{r})}}}
\int_{P_{i}(B_{\overset{\rightarrow}{s}})}%
{\displaystyle\int\limits_{0}^{1}}
h_{i}(x_{1},\dots,x_{n})dx_{i}dm\text{.}%
\end{align*}

Since $\{B_{\overset{\rightarrow}{s}}:$ $\overset{\rightarrow}{s}\in%
\mathbb{Z}
_{\overset{\rightarrow}{r}}\}$ forms a partition for $I^{n},$ the last sum is
equal to%

\begin{align}
&  2\rho_{\omega,i,N}\int_{P_{i}(I^{n})}\underset{i}{\overset{I^{n}%
}{\mathbf{V}}}h_{i}dm+K_{\omega,i,N}\int_{P_{i}(I^{n})}(%
{\displaystyle\int\limits_{0}^{1}}
h_{i}dx_{i})dm\nonumber\\
&  \leq2\rho_{\omega,i,N}\int_{P_{i}(I^{n})}\underset{i}{\overset{I^{n}%
}{\mathbf{V}}}h_{i}dm+K_{\omega,i,N}\Vert h_{i}\Vert_{1}\text{.} \label{b1}%
\end{align}

The expression in $I_{2}$ is less than or equal to%
\begin{align*}
&  \underset{\overset{\rightarrow}{s}\in%
\mathbb{Z}
_{\overset{\rightarrow}{r}}}{\sup}\delta_{\omega,i,N,\overset{\rightarrow}{s}}%
{\displaystyle\sum\limits_{\overset{\rightarrow}{s}\in\mathbb{Z}%
_{_{\overset{\rightarrow}{r}}}}}
\int_{P_{i}(I_{\omega,N,\overset{\rightarrow}{s}})}\Big(|h_{i}(\Psi
_{\omega,1,N,\overset{\rightarrow}{s}}(x_{1}),\dots,a_{i,s_{i}},\dots
,\Psi_{\omega,n,N,\overset{\rightarrow}{s}}(x_{n}))|\\
&  +|h_{i}(\Psi_{\omega,1,N,\overset{\rightarrow}{s}}(x_{1}),\dots
,a_{i,s_{i}-1},\dots,\Psi_{\omega,n,N,\overset{\rightarrow}{s}}(x_{n}))|\Big)%
{\displaystyle\prod\limits_{\overset{j=1}{j\neq i}}^{n}}
\delta_{\omega,j,N,\overset{\rightarrow}{s}}(x_{j})dm.
\end{align*}
Since $h\geq0$, the argument after Equation $(5)$ in \cite{P}, implies%
\begin{align*}
\Big(  &  |h_{i}(\Psi_{\omega,1,N,\overset{\rightarrow}{s}}(x_{1}%
),\dots,a_{i,s_{i}},\dots,\Psi_{\omega,n,N,\overset{\rightarrow}{s}}%
(x_{n}))|\\
&  +|h_{i}(\Psi_{\omega,1,N,\overset{\rightarrow}{s}}(x_{1}),\dots
,a_{i,s_{i}-1},\dots,\Psi_{\omega,n,N,\overset{\rightarrow}{s}}(x_{n}%
))|\Big)\\
\leq &  \underset{i}{\overset{P_{i}(I_{\omega,N,\overset{\rightarrow}{s}%
})\times I}{\mathbf{V}}}h_{i}(\Psi_{\omega,1,N,\overset{\rightarrow}{s}}%
(x_{1}),\dots,x_{i},\dots,\Psi_{\omega,n,N,\overset{\rightarrow}{s}}(x_{n}))\\
&  +2%
{\displaystyle\int\limits_{0}^{1}}
h_{i}(\Psi_{\omega,1,N,\overset{\rightarrow}{s}}(x_{1}),\dots,x_{i},\dots
,\Psi_{\omega,n,N,\overset{\rightarrow}{s}}(x_{n}))dx_{i}\text{,}%
\end{align*}
then we have the expression in $I_{2}$ is less than or equal to
\begin{align*}
&  \rho_{\omega,i,N}%
{\displaystyle\sum\limits_{\overset{\rightarrow}{s}\in\mathbb{Z}%
_{P_{i}(\overset{\rightarrow}{r})}}}
\int_{P_{i}(I_{\omega,N,\overset{\rightarrow}{s}})}\Big(\underset{i}%
{\overset{P_{i}(I_{\omega,N,\overset{\rightarrow}{s}})\times I}{\mathbf{V}}%
}h_{i}(\Psi_{\omega,1,N,\overset{\rightarrow}{s}}(x_{1}),\dots,x_{i}%
,\dots,\Psi_{\omega,n,N,\overset{\rightarrow}{s}}(x_{n}))\\
&  +2%
{\displaystyle\int\limits_{0}^{1}}
h_{i}(\Psi_{\omega,1,N,\overset{\rightarrow}{s}}(x_{1}),\dots,x_{i},\dots
,\Psi_{\omega,n,N,\overset{\rightarrow}{s}}(x_{n}))dx_{i}\Big)%
{\displaystyle\prod\limits_{\overset{j=1}{j\neq i}}^{n}}
\delta_{\omega,j,N,\overset{\rightarrow}{s}}(x_{j})dm.
\end{align*}

Using again the change of variables $U=\Psi^{-1}$, we get the last sum is
equal to%

\begin{align}
&  \rho_{\omega,i,N}%
{\displaystyle\sum\limits_{\overset{\rightarrow}{s}\in\mathbb{Z}%
_{P_{i}(\overset{\rightarrow}{r})}}}
\int_{P_{i}(B_{\overset{\rightarrow}{s}})}\Big(\underset{i}{\overset
{P_{i}(B_{\overset{\rightarrow}{s}})\times I}{\mathbf{V}}}h_{i}(x_{1}%
,\dots,x_{n})+2%
{\displaystyle\int\limits_{0}^{1}}
h_{i}(x_{1},\dots,x_{n})dx_{i}\Big)dm\nonumber\\
&  =\rho_{\omega,i,N}\int_{P_{i}(I^{n})}\underset{i}{\overset{I^{n}%
}{\mathbf{V}}}h_{i}dm+2\rho_{\omega,i,N}\int_{P_{i}(I^{n})}%
{\displaystyle\int\limits_{0}^{1}}
h_{i}dx_{i}dm\nonumber\\
&  \leq\rho_{\omega,i,N}\int_{P_{i}(I^{n})}\underset{i}{\overset{I^{n}%
}{\mathbf{V}}}h_{i}dm+2\rho_{\omega,i,N}\Vert h_{i}\Vert_{1}\text{.}
\label{b2}%
\end{align}
\qquad

Now, combining the results from \eqref{b1}$,$ \eqref{b2} and \eqref{bound}, we
get%
\[
\int_{P_{i}(I^{n})}\underset{i}{\overset{I^{n}}{\mathbf{V}}}\mathcal{L}%
_{\omega}^{(N)}h_{i}dm\leq3\rho_{\omega,i,N}\int_{P_{i}(I^{n})}\underset
{i}{\overset{I^{n}}{\mathbf{V}}}h_{i}dm+(K_{\omega,i,N}+2\rho_{\omega
,i,N})\Vert h_{i}\Vert_{1}\text{.}%
\]
Thus, letting
\begin{align}
\alpha_{1}(\omega)  &  =\underset{i=1,\dots,n}{\max}3\rho_{\omega,i,N}%
\text{,}\label{zezeee}\\
\alpha_{2}(\omega)  &  =\underset{i=1,\dots,n}{\max}(K_{\omega,i,N}%
+2\rho_{\omega,i,N})\text{,}\nonumber
\end{align}
we have, for each $i=1,\dots,n$,%
\[
\int_{P_{i}(I^{n})}\underset{i}{\overset{I^{n}}{\mathbf{V}}}\mathcal{L}%
_{\omega}^{(N)}hdm\leq\alpha_{1}(\omega)\int_{P_{i}(I^{n})}\underset
{i}{\overset{I^{n}}{\mathbf{V}}}hdm+\alpha_{2}(\omega)\Vert h\Vert_{1}\text{.}%
\]

By \cite[Lemma C.$5$]{CQ} and Lemma \ref{singmanergodic}, the index of
compactness $\mathcal{K}_{N}(\omega)$ is less than%
\[
\int_{\sigma^{-\ell}Z}\log\alpha_{1}(\bar{\omega})d\mathbb{P(\bar{\omega}%
)}\text{,}%
\]
where $\ell$ is such that $\sigma^{-\ell}Z$ is the ergodic component of
$\sigma^{N}$ containing $\omega$. Since $\Omega=%
{\displaystyle\bigcup\limits_{s=0}^{k-1}}
\sigma^{-\ell}Z$ and $\int_{\Omega}\log\alpha_{1}(\omega)d\mathbb{P(\omega
)<}0$, we have $\int_{\sigma^{-\ell_{0}}Z}\log\alpha_{1}({\omega
})d\mathbb{P({\omega})<}0$ for some $\ell_{0}=0,1,\dots,k-1$. By Proposition
\ref{kjn}, we have $\mathcal{K}^{\ast}=\frac{\mathcal{K}_{N}(\omega)}{N}<0$.

Since the transfer operator $\mathcal{L}_{\omega}^{(n)}$ is a Markov operator
for each $\omega\in\Omega$, for any density function $h\in BV(I^{n})$, we have
that $\Vert\mathcal{L}_{\omega}^{(n)}h\Vert_{BV}\geq\Vert\mathcal{L}_{\omega
}^{(n)}h\Vert_{1}=\Vert h\Vert_{1}=1$. This shows that
\begin{equation}
\lambda^{\ast}\geq0, \label{bsb}%
\end{equation}
and therefore $\mathcal{K}^{\ast}<\lambda^{\ast}$. This finishes the proof of
Theorem \ref{quco} (i).
\end{proof}

\begin{proof}
[Proof of Theorem \ref{quco} (ii)]In the proof of Theorem \ref{quco} (i), we
proved that there are $N\in%
\mathbb{N}
$ where $N$ satisfies the condition in \eqref{Nc} and $\alpha_{1},\alpha
_{2}:\Omega\rightarrow%
\mathbb{R}
^{+}$such that $\int_{\Omega}\log\alpha_{1}(\omega)d\mathbb{P(\omega)<}0$ with
the property that%
\begin{equation}
\overset{I^{n}}{\mathbb{V}}\mathcal{L}_{\omega}^{(N)}h\leq\alpha_{1}%
(\omega)\overset{I^{n}}{\mathbb{V}}h+\alpha_{2}(\omega)\Vert h\Vert
_{1}\text{,} \label{eq:LY}%
\end{equation}
for all $h\in BV(I^{n})$ and $\omega\in\Omega$. We also proved that
$\lambda^{\ast}\geq0$ in \eqref{bsb}. It remains to prove $\lambda^{\ast}%
\leq0$. Since $\Vert\mathcal{L}_{\omega}\Vert_{1}\leq1$, it is enough to
consider the growth of the variation of the term $\mathcal{L}_{\omega}^{(n)}%
h$. Using the argument in \cite[Lemma C.5]{CQ} and \cite[Proposition 1.4]{B},
$\alpha_{1}(\omega)$ and $\alpha_{2}(\omega)$ can be redefined so that
\eqref{eq:LY} holds and $\alpha_{2}(\omega)$ is uniformly bounded by positive
constant $\tilde{\alpha}_{2}$, which gives a hybrid Lasota-Yorke inequality%
\begin{equation}
\overset{I^{n}}{\mathbb{V}}\mathcal{L}_{\omega}^{(N)}h\leq\alpha_{1}%
(\omega)\overset{I^{n}}{\mathbb{V}}h+\tilde{\alpha}_{2}\Vert h\Vert
_{1}\text{.} \label{hlyi}%
\end{equation}
By iterating the hybrid Lasota-Yorke inequality \eqref{hlyi}, we get a bound
on the sequence $(\overset{I^{n}}{\mathbb{V}}\mathcal{L}_{\omega}%
^{(Nk)}h)_{k=1}^{\infty}$. Therefore,
\[
\lim_{k\rightarrow\infty}\frac{1}{Nk}\log\Vert\mathcal{L}_{\omega}%
^{(Nk)}h\Vert_{BV}\leq0\text{.}%
\]
and since this is true for almost every $\omega\in\Omega$,
Proposition~\ref{kjn} implies that $\lambda^{\ast}\leq0$.
\end{proof}

\section{Random invariant densities and ACIPs, skew product ACIPs and Physical
measures}

\label{S:4}

The concept of random invariant measures (for random dynamical systems) is a
natural generalization of the notion of invariant measures (for deterministic
dynamical systems). In this section we introduce our main results regarding
the existence of random invariant densities and measures as well as skew
product ACIPs. After that, we deduce the existence of physical measures. We
shall assume throughout the rest of the paper that $\int_{\Omega}\log^{+}%
\Vert\mathcal{L}_{\omega}\Vert_{BV}d\mathbb{P(\omega)<}\infty$.

\begin{definition}
Let $\mathscr{F}=\{f_{\omega}\}_{\omega\in\Omega}$ be an admissible random
Jab\l o\'{n}ski map. A family $\{\mu_{\omega}\}_{\omega\in\Omega}$ of random
invariant measures for $\mathscr{F}$ is a family of probability measures
$\mu_{\omega}$ on $I^{n}$ where the map $\omega\mapsto\mu_{\omega}$ is
measurable and%
\[
f_{\omega}\mu_{\omega}=\mu_{\sigma\omega}\text{, for }\mathbb{P}\text{-a.e.
}\omega\in\Omega\text{.}%
\]
A family $\{h_{\omega}\}_{\omega\in\Omega}$ of random invariant densities for
$\mathscr{F}$ is a family such that $h_{\omega}\geq0$, $h_{\omega}\in
L^{1}(I^{n})$, $\Vert h_{\omega}\Vert_{1}=1$, the map $\omega\mapsto
h_{\omega}$ is measurable and%
\begin{equation}
\mathcal{L}_{{\omega}}h_{\omega}=h_{\sigma\omega}\text{, for }\mathbb{P}%
\text{-a.e. }\omega\in\Omega\text{.} \label{eqivar}%
\end{equation}

\end{definition}

\begin{proposition}
\label{singmanergodic copy(3)} Let $N$ be as in \eqref{Nc}.
Then, for $\mathbb{P}$-almost all $\omega\in\Omega$, we have
\[
\lim_{j\rightarrow\infty}\frac{1}{j}\sum_{t=1}^{j}\log(\alpha_{1}(\sigma
^{-tN}\omega))<0\text{.}%
\]

\end{proposition}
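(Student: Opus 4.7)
The plan is to bound $\log\alpha_1(\omega)$ pointwise by a Birkhoff-style sum of a simpler observable over the base map $\sigma$ itself (rather than $\sigma^N$), and then invoke the Birkhoff ergodic theorem for $\sigma^{-1}$, which is ergodic because $\sigma$ is invertible and $\mathbb{P}$-ergodic. This circumvents the main conceptual obstacle, namely that the transformation $\sigma^N$ need not be ergodic (cf.\ Lemma~\ref{singmanergodic}), so a direct application of the Birkhoff ergodic theorem to $\sigma^{-N}$ with observable $\log\alpha_1$ would only return a conditional expectation over the $\sigma^N$-ergodic components, which a priori could be non-negative on some of them.

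The first step is to establish the pointwise inequality
\begin{equation*}
\log\alpha_1(\omega) \le \log 3 - \sum_{k=0}^{N-1} g(\sigma^k\omega), \qquad g(\omega):=\min_{i=1,\dots,n}\log\gamma_i(\omega),
\end{equation*}
which is essentially implicit in the proof of Theorem~\ref{quco}(i). To see it, combine the definition $\alpha_1(\omega)=\max_i 3\rho_{\omega,i,N}$ from \eqref{zezeee} with the identity $\log\rho_{\omega,i,N}=-\inf_{\vec s,x_i}\log|\varphi'_{\omega,i,N,\vec s}(x_i)|$ (which follows from $\delta=|\Psi'|=1/|\varphi'\circ\Psi|$) and the pointwise chain-rule estimate used en route to \eqref{lolololo}, yielding $\max_i\log\rho_{\omega,i,N}\le -\sum_{k=0}^{N-1}\min_i\log\gamma_i(\sigma^k\omega)=-\sum_{k=0}^{N-1}g(\sigma^k\omega)$. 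Substituting into the target Ces\`aro average and swapping the order of summation produces
\begin{equation*}
\frac{1}{j}\sum_{t=1}^{j}\log\alpha_1(\sigma^{-tN}\omega) \le \log 3 - \frac{1}{j}\sum_{t=1}^{j}\sum_{k=0}^{N-1}g(\sigma^{k-tN}\omega) = \log 3 - \frac{1}{j}\sum_{m=1}^{jN}g(\sigma^{-m}\omega),
\end{equation*}
where the last equality uses that, as $(k,t)$ ranges over $\{0,\dots,N-1\}\times\{1,\dots,j\}$, the exponent $k-tN$ takes each value in $\{-1,-2,\dots,-jN\}$ exactly once.

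Since $\sigma^{-1}$ is ergodic and $g\in L^1(\mathbb P)$ (by the admissibility condition \eqref{ec}, $\int g\,d\mathbb P=\Gamma>\gamma>0$), the Birkhoff ergodic theorem for $\sigma^{-1}$ applied to $g$ gives, for $\mathbb P$-a.e.\ $\omega$,
\begin{equation*}
\frac{1}{jN}\sum_{m=1}^{jN}g(\sigma^{-m}\omega) \longrightarrow \Gamma \quad \text{as } j\to\infty.
\end{equation*}
Multiplying by $N$ and using the defining property $N\gamma>\log 3$ of $N$ from \eqref{Nc}, the displayed inequality above yields, for $\mathbb P$-a.e.\ $\omega$,
\begin{equation*}
\limsup_{j\to\infty}\frac{1}{j}\sum_{t=1}^{j}\log\alpha_1(\sigma^{-tN}\omega) \le \log 3 - N\Gamma < \log 3 - N\gamma < 0.
\end{equation*}
Finally, the standing integrability assumption of Section~\ref{S:4} ensures $\log^+\alpha_1\in L^1(\mathbb P)$, so applying the Birkhoff ergodic theorem to $\sigma^{-N}$ with observable $\log\alpha_1$ guarantees that the actual limit exists $\mathbb P$-a.e.; it therefore equals the $\limsup$ above and is strictly negative, which is the desired conclusion.
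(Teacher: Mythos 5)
Your argument is correct, but it takes a genuinely different route from the paper's. The paper invokes Lemma~\ref{singmanergodic} to decompose $\Omega$ into the $k$ ergodic components $\sigma^{\ell}Z$ of $\sigma^{-N}$, applies the Birkhoff ergodic theorem on each component, and then estimates the \emph{integral} $\int_{\sigma^{\ell}Z}\log\alpha_{1}\,d\mathbb{P}$ by pushing the chain-rule bound of \eqref{lolololo} through a change of variables over the components; this yields existence of the limit and the value $k\int_{\sigma^{\ell}Z}\log\alpha_{1}\,d\mathbb{P}\le\frac{1}{k}(\log 3-N\Gamma)<0$ in one stroke. You instead promote the same chain-rule estimate to a \emph{pointwise} bound $\log\alpha_{1}(\omega)\le\log 3-\sum_{k=0}^{N-1}g(\sigma^{k}\omega)$, telescope the double sum into a single Birkhoff sum for the ergodic map $\sigma^{-1}$, and conclude $\limsup\le\log 3-N\Gamma<0$ without ever needing the ergodic decomposition of $\sigma^{N}$; the decomposition (or, as you do, the non-ergodic Birkhoff theorem for $\sigma^{-N}$) is only needed at the very end to upgrade the $\limsup$ to an honest limit. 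Your route is arguably cleaner and makes transparent that the bound $\log 3-N\Gamma$ is uniform over a.e.\ $\omega$, whereas the paper's route gives the exact value of the limit on each component. Two small remarks: (i) your justification that $\log^{+}\alpha_{1}\in L^{1}$ via the standing assumption $\int\log^{+}\Vert\mathcal{L}_{\omega}\Vert_{BV}\,d\mathbb{P}<\infty$ is not the natural one, since $\alpha_{1}$ is not directly controlled by $\Vert\mathcal{L}_{\omega}\Vert_{BV}$; it is cleaner to observe that your own pointwise bound gives $(\log\alpha_{1})^{+}\le\log 3+\sum_{k=0}^{N-1}g^{-}\circ\sigma^{k}$, which is integrable because \eqref{ec} (together with the uniform upper bound on $\gamma_{i}$ coming from the fixed common partition) forces $g\in L^{1}$; (ii) the conclusion of the non-ergodic Birkhoff theorem is a priori $[-\infty,\infty)$-valued, which still satisfies the stated inequality, but matches the paper's later use of the proposition (a finite $\log\hat{\alpha}(\omega)$) only when $\log\alpha_{1}\in L^{1}$ -- a point on which your proof and the paper's are equally silent.
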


\begin{proof}
By Lemma \ref{singmanergodic}, there exists $k$, a factor of $N$, and a
$\sigma^{-N}$-invariant subset $Z$ of $\Omega$ of measure $1/k$ such that
$\Omega=%
{\displaystyle\bigcup\limits_{s=0}^{k-1}}
\sigma^{\ell}Z$ and $\sigma^{-N}|_{Z}$ is ergodic. In fact, since $\sigma$ is
invertible, ergodic and $\mathbb{P}$-preserving, $\sigma^{-N}|_{\sigma^{\ell
}Z}$ is ergodic and $\mathbb{P(\sigma}^{\ell}Z\mathbb{)=}\frac{1}{k}$, for all
$\ell=0,1,\dots,k-1$. By Birkhoff ergodic theorem, we have
\[
\lim_{j\rightarrow\infty}\frac{1}{j}\sum_{t=1}^{j}\log(\alpha_{1}(\sigma
^{-tN}\omega))=k\int_{\mathbb{\sigma}^{\ell}Z}\log\alpha_{1}(\bar
\omega)d\mathbb{P(\bar\omega)}\text{,}%
\]
for $\mathbb{P}$-almost all $\omega\in\sigma^{\ell}Z$, and $\ell
=0,1,\dots,k-1$. Note that for any $\ell=0,1,\dots,k-1$ and $\mathbb{P}%
$-almost all $\omega\in\sigma^{\ell}Z$, the definition of $\alpha_{1}(\omega)$
in \eqref{zezeee} and the argument in \eqref{lolololo} imply%
\begin{align*}
&  \int_{\sigma^{\ell}Z}\log\alpha_{1}(\omega)d\mathbb{P(\omega)=}\int
_{\sigma^{\ell}Z}\log\Big(\underset{i=1,\dots,n}{\max}3\Big(\underset
{\overset{\rightarrow}{s}\in%
\mathbb{Z}
_{\overset{\rightarrow}{r}}}{\sup}|(\varphi_{\omega,i,N,\overset{\rightarrow
}{s}}^{-1})^{^{\prime}}|\Big)\Big)d\mathbb{P(\omega)}\\
&  =\int_{\sigma^{\ell}Z}\log\Big(\underset{i=1,\dots,n}{\max}3\Big(\underset
{\overset{\rightarrow}{s}\in%
\mathbb{Z}
_{\overset{\rightarrow}{r}}}{\sup}|\Big((\varphi_{\sigma^{N-1}\omega
,i,\overset{\rightarrow}{s_{N-1}}}\circ\dots\circ\varphi_{\sigma
\omega,i,\overset{\rightarrow}{s_{1}}}\circ\varphi_{\omega,i,\overset
{\rightarrow}{s}})^{-1}\Big)^{^{\prime}}|\Big)\Big)d\mathbb{P(\omega)}\\
&  \leq\int_{\sigma^{\ell}Z}\log\Big(\underset{i=1,\dots,n}{\max}3\Big(|%
{\displaystyle\prod\limits_{t=0}^{N-1}}
\underset{\overset{\rightarrow}{s}\in%
\mathbb{Z}
_{\overset{\rightarrow}{r}}}{\sup}\frac{1}{\varphi_{\sigma^{t}(\omega
),i,\overset{\rightarrow}{s_{t}}}^{\prime}(\varphi_{\omega,i,t-1,\overset
{\rightarrow}{s_{t-1}}}(x_{i}))}|\Big)\Big)d\mathbb{P(\omega)}\text{. }%
\end{align*}
By definition of $\mathbb{\gamma}_{i}$ in Equation \eqref{fec}, we have
\begin{align*}
\int_{\sigma^{\ell}Z}\log\alpha_{1}(\omega)d\mathbb{P(\omega)}  &  \leq
\int_{\sigma^{\ell}Z}\log3-\underset{i=1,\dots,n}{\min}\log\Big(%
{\displaystyle\prod\limits_{t=0}^{N-1}}
\mathbb{\gamma}_{i}(\sigma^{t}\omega)\Big)d\mathbb{P(\omega)}\\
&  =\frac{\log(3)}{k}\mathbb{-}%
{\displaystyle\sum\limits_{t=0}^{N-1}}
\int_{\sigma^{\ell}Z} \underset{i=1,\dots,n}{\min} \log(\mathbb{\gamma}%
_{i}(\sigma^{t}\omega))d\mathbb{P(\omega)}\text{.}%
\end{align*}
Since $\sigma$ is measure preserving, a change of variables makes the last
term equal to%
\begin{align*}
\frac{\log(3)}{k}  &  \mathbb{-}%
{\displaystyle\sum\limits_{t=0}^{N-1}}
\int_{Z} \underset{i=1,\dots,n}{\min} \log(\mathbb{\gamma}_{i}(\sigma^{t-\ell
}\omega))d\mathbb{P(\omega)}=\frac{\log(3)}{k}\mathbb{-}\frac{N}{k}%
\int_{\Omega}\underset{i=1,\dots,n}{\min} \log(\mathbb{\gamma}_{i}%
(\omega))d\mathbb{P(\omega)}\\
&  =\frac{1}{k}(\log(3)\mathbb{-}N\Gamma)<\frac{1}{k}(\log(3)\mathbb{-}%
N\gamma)<0\text{,}%
\end{align*}
by Definition \ref{arjm} and \eqref{Nc}.
\end{proof}

\begin{theorem}
\label{muer} Consider an admissible random Jab\l o\'{n}ski map $\mathscr F$.
For each $\omega\in\Omega\ $and $k=1,2,\dots$, we define
\[
h_{\omega}^{k}=(\mathcal{L}_{\sigma^{-1}\omega}\circ\dots\circ\mathcal{L}%
_{\sigma^{-(k-1)}\omega}\circ\mathcal{L}_{\sigma^{-k}\omega})1\text{,}%
\]
where $1\in BV(I^{n})$ is the constant function and for each $s=1,2,\dots$, we
define%
\[
H_{\omega}^{s}=\frac{1}{s}\sum_{k=1}^{s}h_{\omega}^{k}\text{.}%
\]
Then, for $\mathbb{P}$-a.e. $\omega\in\Omega$:\newline(i) the sequence
$\{H_{\omega}^{s}\}_{s\in\mathbb{N}}$ is relatively compact in $L^{1}$;
and\newline(ii) the following limit exists,%
\begin{equation}
\lim_{s\rightarrow\infty}H_{\omega}^{s}=:h_{\omega}\in BV(I^{n})\text{ in
}L^{1}\text{.} \label{hw}%
\end{equation}
Moreover, $\{h_{\omega}\}_{\omega\in\Omega}$ is a family of random invariant
densities for $\mathscr{F}$.
\end{theorem}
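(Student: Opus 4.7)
My approach combines two ingredients: a uniform BV bound on the backward iterates $h_\omega^k$ via the random Lasota--Yorke inequality \eqref{hlyi}, and the compact embedding $BV(I^n) \hookrightarrow L^1(I^n)$ from Corollary 3.49 in \cite{AFP}. Together these yield relative compactness of $\{H_\omega^s\}$ in $L^1$, which is part (i). Equivariance of any accumulation point will follow from a short algebraic identity, and the full-sequence convergence in (ii) will come from a mean ergodic argument for the Markov transfer operator of the skew product (Remark~\ref{re3}).

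\textbf{Step 1 (uniform BV bound).} Fixing $N$ as in \eqref{Nc}, I would factor, for $k = jN$,
$$h_\omega^{jN} = \mathcal{L}_{\sigma^{-N}\omega}^{(N)} \circ \mathcal{L}_{\sigma^{-2N}\omega}^{(N)} \circ \cdots \circ \mathcal{L}_{\sigma^{-jN}\omega}^{(N)}(1),$$
and iterate the hybrid inequality \eqref{hlyi} at each block. Using $\overset{I^n}{\mathbb{V}} 1 = 0$ and $\|h_\omega^k\|_1 = 1$, this produces
$$\overset{I^n}{\mathbb{V}} h_\omega^{jN} \leq \tilde\alpha_2 \sum_{i=0}^{j-1} \prod_{\ell=1}^{i} \alpha_1(\sigma^{-\ell N}\omega).$$
Proposition~\ref{singmanergodic copy(3)} provides exponential decay of the partial products for $\mathbb{P}$-a.e. $\omega$, so the series is uniformly bounded in $j$. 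For general $k = jN + r$ with $0 \leq r < N$, the factorization $h_\omega^k = \mathcal{L}_{\sigma^{-r}\omega}^{(r)} h_{\sigma^{-r}\omega}^{jN}$ combined with Proposition~\ref{BVbded} extends the bound, yielding $\sup_k \|h_\omega^k\|_{BV} < \infty$ for $\mathbb{P}$-a.e. $\omega$. This transfers to the Cesàro averages, so $\sup_s \|H_\omega^s\|_{BV} < \infty$ a.s., and the compact embedding into $L^1$ gives (i).

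\textbf{Step 2 (equivariance and full-sequence convergence).} The identity $\mathcal{L}_\omega h_\omega^k = h_{\sigma\omega}^{k+1}$ gives
$$\mathcal{L}_\omega H_\omega^s - H_{\sigma\omega}^s = \tfrac{1}{s}\bigl(h_{\sigma\omega}^{s+1} - h_{\sigma\omega}^1\bigr) \to 0 \text{ in } L^1,$$
so any $L^1$-accumulation point $h_\omega$ is a density lying in $BV(I^n)$ (by lower semicontinuity of the BV seminorm) and satisfies $\mathcal{L}_\omega h_\omega = h_{\sigma\omega}$. To upgrade subsequential to full-sequence convergence, I would interpret $H_\omega^s(x) = \frac{1}{s}\sum_{k=1}^s \mathcal{L}_F^k \mathbf{1}(\omega,x)$ as Cesàro averages of the Markov operator $\mathcal{L}_F$ of the skew product on $L^1(\Omega \times I^n, \mathbb{P}\times m)$. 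The fiberwise BV bound and Fubini deliver strong $L^1$-precompactness of the orbit on the product space, and the Kakutani--Yosida mean ergodic theorem for Markov operators then produces strong $L^1(\Omega \times I^n)$-convergence to a unique $\mathcal{L}_F$-invariant density $h(\omega,x) = h_\omega(x)$. Combined with fiberwise $L^1$-precompactness, the resulting uniqueness of subsequential limits forces $H_\omega^s \to h_\omega$ in $L^1(I^n)$ along the full sequence for $\mathbb{P}$-a.e. $\omega$. Measurability of $\omega \mapsto h_\omega$ follows from that of $H_\omega^s$ and the $L^1$-convergence.

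\textbf{Main obstacle.} The delicate point is converting fiberwise $L^1$-precompactness of $\{H_\omega^s\}$ (immediate from the BV bound) into full-sequence $L^1(I^n)$ convergence for each $\omega$. The consecutive differences $\|H_\omega^{s+1} - H_\omega^s\|_1 = O(1/s)$ are not summable in $s$, ruling out a direct Cauchy argument, so one must instead lift the problem to the skew-product transfer operator and exploit mean-ergodic convergence on the product space to pin down a unique accumulation point fiberwise.
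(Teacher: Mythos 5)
Your Step 1 is correct and is essentially the paper's own argument for part (i): iterate the hybrid inequality \eqref{hlyi} along blocks of length $N$, control the partial products of $\alpha_{1}$ via Proposition~\ref{singmanergodic copy(3)}, extend from $k\in N\mathbb{N}$ to all $k$ (your factorization through Proposition~\ref{BVbded} is fine; the paper does this implicitly), and use compactness of $BV(I^{n})$ in $L^{1}$ (the paper cites \cite[Lemma A.1]{liv} rather than \cite{AFP}). Your identity $\mathcal{L}_{\omega}H_{\omega}^{s}-H_{\sigma\omega}^{s}=\tfrac{1}{s}(h_{\sigma\omega}^{s+1}-h_{\sigma\omega}^{1})$ is also correct.

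The gap is in Step 2, where you replace the paper's citation of the random mean ergodic theorem by your own argument. Two issues. First, fiberwise $L^{1}(I^{n})$-precompactness plus Fubini does not give strong precompactness of $\{H^{s}\}$ in $L^{1}(\Omega\times I^{n})$: there is no compactness mechanism in the $\omega$-direction. What the fiberwise BV bounds do give is uniform integrability on the product (via a $BV(I^{n})\hookrightarrow L^{p}(I^{n})$ bound for some $p>1$ and dominated convergence in $\omega$), i.e.\ weak sequential compactness, and that indeed suffices for the Yosida--Kakutani theorem, so $H^{s}\rightarrow h$ strongly in $L^{1}(\mathbb{P}\times m)$; this part is repairable. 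Second, and decisively, product-space convergence only yields $\int_{\Omega}\Vert H_{\omega}^{s}-h_{\omega}\Vert_{L^{1}(I^{n})}d\mathbb{P}\rightarrow0$, hence fiberwise convergence for $\mathbb{P}$-a.e.\ $\omega$ only along a subsequence of $s$. Your claimed ``uniqueness of subsequential limits'' is never established: fiberwise precompactness says every subsequence of $\{H_{\omega}^{s}\}_{s}$ has a convergent sub-subsequence, but nothing identifies an arbitrary fiberwise accumulation point with $h_{\omega}$ --- the exceptional sets $\{\omega:\Vert H_{\omega}^{s}-h_{\omega}\Vert_{1}>\epsilon\}$ merely have small measure and may drift with $s$, so a fixed $\omega$ can lie in them infinitely often; equivariance cannot pin the limit either, since there may be several equivariant densities ($\dim Y_{1}(\omega)$ can exceed $1$). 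A monotonicity trick also falls short: $\phi(\omega):=\limsup_{s}\Vert H_{\omega}^{s}-h_{\omega}\Vert_{1}$ satisfies $\phi\circ\sigma\leq\phi$ and is therefore a.e.\ constant, but the subsequence only shows $\liminf_{s}\Vert H_{\omega}^{s}-h_{\omega}\Vert_{1}=0$, not that the constant vanishes. This quenched, full-sequence statement is exactly what the paper outsources to Nakamura--Toyokawa \cite[Theorem B]{NakamuraToyokawa}, which upgrades the fiberwise relative compactness of part (i) to a.e.\ fiberwise convergence to a random invariant density. So your proposal proves (i), but (ii) as written is not proved; you should either invoke that theorem, as the paper does, or supply a genuine argument for the fiberwise pinning.
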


\begin{proof}
Recall from the proof of Theorem \ref{quco} (ii), there are $N\in%
\mathbb{N}
$ where $N$ satisfies\ the condition in \eqref{Nc}, a constant $\tilde{\alpha
}_{2}$ and a positive measurable function $\alpha_{1}:\Omega\rightarrow%
\mathbb{R}
^{+}$ such that $\int_{\Omega}\log\alpha_{1}(\omega)d\mathbb{P(\omega)<}0$ and
the hybrid Lasota-Yorke inequality \eqref{hlyi} is satisfied. That is,
\[
\overset{I^{n}}{\mathbb{V}}\mathcal{L}_{\omega}^{(N)}h\leq\alpha_{1}%
(\omega)\overset{I^{n}}{\mathbb{V}}h+\tilde{\alpha}_{2}\Vert h\Vert_{1},
\]
for all $h\in BV(I^{n})$ and $\omega\in\Omega$. For $k=1,2,\dots$, and
$\mathbb{P}$-almost all $\omega\in\Omega$,
the following holds,
\[
h_{\omega}^{Nk}=\mathcal{L}_{\sigma^{-Nk}\omega}^{(Nk)}1\text{.}%
\]
Applying \eqref{hlyi} to upper bound the variation of $h_{\omega}^{Nk}$ on
$I^{n}$ yields
\begin{align*}
\overset{I^{n}}{\mathbb{V}}h_{\omega}^{Nk}  &  =\overset{I^{n}}{\mathbb{V}%
}\mathcal{L}_{\sigma^{-Nk}\omega}^{(Nk)}1\\
&  \leq\alpha_{1}(\sigma^{-N}\omega)\overset{I^{n}}{\mathbb{V}}(\mathcal{L}%
_{\sigma^{-(N+1)}(\omega)}\circ\dots\circ\mathcal{L}_{\sigma^{-2N}(\omega
)}\circ\mathcal{L}_{\sigma^{-Nk}(\omega)})1\\
&  \ +\tilde{\alpha}_{2}\Vert(\mathcal{L}_{\sigma^{-(N+1)}(\omega)}\circ
\dots\circ\mathcal{L}_{\sigma^{-2N}(\omega)}\circ\mathcal{L}_{\sigma
^{-Nk}(\omega)})1\Vert_{1}\\
&  \leq\alpha_{1}(\sigma^{-N}\omega)\alpha_{1}(\sigma^{-2N}\omega
)\overset{I^{n}}{\mathbb{V}}(\mathcal{L}_{\sigma^{-(2N+1)}(\omega)}\circ
\dots\circ\mathcal{L}_{\sigma^{-3N}(\omega)}\circ\mathcal{L}_{\sigma
^{-Nk}(\omega)})1\\
&  \ +\tilde{\alpha}_{2}\Vert(\mathcal{L}_{\sigma^{-(N+1)}(\omega)}\circ
\dots\circ\mathcal{L}_{\sigma^{-2N}(\omega)}\circ\mathcal{L}_{\sigma
^{-Nk}(\omega)})1\Vert_{1}\\
&  \ +\alpha_{1}(\sigma^{-N}\omega)\tilde{\alpha}_{2}\Vert(\mathcal{L}%
_{\sigma^{-(2N+1)}(\omega)}\circ\dots\circ\mathcal{L}_{\sigma^{-3N}(\omega
)}\circ\mathcal{L}_{\sigma^{-Nk}(\omega)})1\Vert_{1}\\
&  \leq\dots\leq\alpha_{1}(\sigma^{-N}\omega)\alpha_{1}(\sigma^{-2N}%
\omega)\dots\alpha_{1}(\sigma^{-kN}\omega)\overset{I^{n}}{\mathbb{V}}1\\
&  \ +\tilde{\alpha}_{2}\Vert(\mathcal{L}_{\sigma^{-(N+1)}(\omega)}\circ
\dots\circ\mathcal{L}_{\sigma^{-2N}(\omega)}\circ\mathcal{L}_{\sigma
^{-Nk}(\omega)})1\Vert_{1}\\
&  \ +\alpha_{1}(\sigma^{-N}\omega)\tilde{\alpha}_{2}\Vert(\mathcal{L}%
_{\sigma^{-(2N+1)}(\omega)}\circ\dots\circ\mathcal{L}_{\sigma^{-3N}(\omega
)}\circ\mathcal{L}_{\sigma^{-Nk}(\omega)})1\Vert_{1}\\
&  \ +\dots+\alpha_{1}(\sigma^{-N}\omega)\alpha_{1}(\sigma^{-2N}\omega
)\dots\alpha_{1}(\sigma^{-kN}\omega)\tilde{\alpha}_{2}\Vert1\Vert_{1}\text{,}%
\end{align*}
and since $\overset{I^{n}}{\mathbb{V}}1=0$, $\Vert1\Vert_{1}=1$ and the
transfer operator is contractive, we have%

\begin{align*}
\overset{I^{n}}{\mathbb{V}}h_{\omega}^{Nk}  &  \leq\tilde{\alpha}%
_{2}\Big(1+\alpha_{1}(\sigma^{-N}\omega)+\alpha_{1}(\sigma^{-N}\omega
)\alpha_{1}(\sigma^{-2N}\omega)+\dots\\
&  \ +\alpha_{1}(\sigma^{-N}\omega)\alpha_{1}(\sigma^{-2N}\omega)\dots
\alpha_{1}(\sigma^{-kN}\omega)\Big)\\
&  =\tilde{\alpha}_{2}(1+\sum_{j=1}^{k}\alpha_{1}^{(j)}(\sigma^{-jN}%
\omega))\text{,}%
\end{align*}
where for $j=1,2,\dots$, we let $\alpha_{1}^{(j)}(\sigma^{-jN}\omega
)=\alpha_{1}(\sigma^{-N}\omega)\alpha_{1}(\sigma^{-2N}\omega)\dots\alpha
_{1}(\sigma^{-jN}\omega)$.
By Proposition \ref{singmanergodic copy(3)}, there exists $0<\hat{\alpha
}(\omega)<1$ such that the time averages $\frac{1}{j}\log\alpha_{1}%
^{(j)}(\sigma^{-jN}\omega)$ converge to $\log(\hat{\alpha}(\omega))<0$. Choose
$\alpha(\omega)$ such that $0<\hat{\alpha}(\omega)<\alpha(\omega)<1$. For
sufficiently large $j_{0}(\omega)$, we have that%
\[
\alpha_{1}^{(j)}(\sigma^{-jN}\omega)<\alpha(\omega)^{j}\text{, for all }j\geq
j_{0}(\omega)\text{.}%
\]
Let $c(\omega)$ be defined as%
\[
c(\omega)=\underset{1\leq j\leq j_{0}(\omega)}{\max}(\frac{\alpha_{1}%
^{(j)}(\sigma^{-jN}\omega)}{\alpha(\omega)^{j}},1)\text{,}%
\]
and hence for all $j$, we have that%
\[
\alpha_{1}^{(j)}(\sigma^{-jN}\omega)<c(\omega)\alpha(\omega)^{j}\text{.}%
\]
Taking the sum over $j$, we get that%
\begin{align*}
\tilde{\alpha}_{2}(1+\sum_{j=1}^{k}\alpha_{1}^{(j)}(\sigma^{-jN}\omega))  &
\leq\tilde{\alpha}_{2}(1+c(\omega)\sum_{j=0}^{\infty}\alpha(\omega)^{j})\\
&  =\tilde{\alpha}_{2}(1+c(\omega)\tilde{\alpha}(\omega))\text{,}%
\end{align*}
where $\tilde{\alpha}(\omega)=\frac{1}{1-\alpha(\omega)}$. Let
\[
c_{1}(\omega)=\tilde{\alpha}_{2}(1+c(\omega)\tilde{\alpha}(\omega))\text{,}%
\]
then we have proven that for every $k\in\mathbb{N}$
\[
\overset{I^{n}}{\mathbb{V}}h_{\omega}^{Nk}\leq c_{1}(\omega)\text{.}%
\]
From this inequality, it follows $\{\overset{I^{n}}{\mathbb{V}}h_{\omega}%
^{Nk}\}_{k\in\mathbb{N}}$ is bounded. The same holds for the whole sequence
$\{\overset{I^{n}}{\mathbb{V}}h_{\omega}^{k}\}_{k\in\mathbb{N}}$, and indeed
for the averages $\{\overset{I^{n}}{\mathbb{V}}H_{\omega}^{s}\}_{s\in
\mathbb{N}}$. Hence, $\{H_{\omega}^{s}\}_{s\in\mathbb{N}}$ is relatively
compact in $L^{1}$ by \cite[Lemma A.$1$]{liv}. This establishes (i).

Then, the random mean ergodic theorem \cite[Theorem B]{NakamuraToyokawa} shows
that $\{H_{\omega}^{s}\}_{s\in\mathbb{N}}$ converges in the strong sense to a
random invariant density $h_{\omega}$, as in \eqref{hw}. The fact that
$h_{\omega}\in BV(I^{n})$ follows once again from the relative compactness of
$BV(I^{n})$ in $L^{1}$. This establishes (ii).
\end{proof}

We can think of the above random invariant densities $h_{\omega}$ as
asymptotic distributions arrived at by running the dynamics of a uniform
distribution from the distant past. Returning to the present setting of random
compositions of Jab\l o\'{n}ski maps, a family of random invariant measures
with densities of bounded variation will also define a measure that is
invariant with respect to the associated skew product, as described in the
following remark.

\begin{remark}
\label{re5} For $\mathbb{P}$-a.e. $\omega\in\Omega$, define $\mu_{\omega}$ on
the fiber $\{\omega\}\times I^{n}\subset\Omega\times I^{n}$, as
\[
\frac{d\mu_{\omega}}{dm}=h_{\omega}\text{,}%
\]
where $h_{\omega}$ is given by \eqref{hw}. Then $\mu_{\omega}$ is a random
invariant ACIP and the measure $\mu$ defined on $\mathbb{P\times}m$-measurable
sets $A\subseteq$ $\Omega\times I^{n}$ by
\[
\mu(A)=%
{\displaystyle\int\limits_{\Omega}}
\mu_{\omega}(A)d\mathbb{P(\omega)}\text{,}%
\]
is an ACIP for the associated skew product $F$ defined in \eqref{sp}.
\end{remark}

Multiplicative ergodic theorems are concerned with random dynamical systems
$\mathcal{R}= (\Omega,\mathcal{F},\mathbb{P},\sigma,\mathcal{X},\mathcal{L}%
\mathbb{)}$.
They give rise to an $\omega$-dependent hierarchical decomposition of
$\mathcal{X}$ into equivariant subspaces, called Oseledets spaces. In the
literature, multiplicative ergodic theorems are divided into two types,
according to the invertibility of the base map $\sigma$ and the operators
$\mathcal{L}_{\omega}$. In \cite{FLQ}, Froyland, Lloyd and Quas show a
semi-invertible multiplicative ergodic theorem, where the base is assumed to
be invertible, but there is no assumption about invertibility of the operators
$\mathcal{L}_{\omega}$. We will apply this theorem to show that the random
invariant densities $h_{\omega}$ found in Theorem~\ref{muer} belong to the
leading Oseledets subspace. Moreover, we will deduce the finiteness of the
number of ergodic ACIPs in Corollary \ref{coro1}.

An Oseledets splitting for a random dynamical system $\mathcal{R}%
=(\Omega,\mathcal{F},\mathbb{P},\sigma,\mathcal{X},\mathcal{L}\mathbb{)}$
consists of

\begin{itemize}
\item A sequence of isolated (exceptional) Lyapunov exponents
\[
\infty>\lambda^{*}=\lambda_{1}>\lambda_{2}>\dots>\lambda_{l}>\mathcal{K}%
^{\ast}\geq-\infty\text{,}%
\]
where the index $l$ $\geq1$ is allowed to be finite or countably infinite, and

\item A family of $\omega$-dependent splittings,%
\begin{equation}
\mathcal{X}=Y_{1}(\omega)\oplus\dots\oplus Y_{l}(\omega)\oplus V(\omega
)\text{,} \label{y1}%
\end{equation}
where for $j=1,\dots,l$, $d_{j}:=\dim(Y_{j}(\omega))<\infty$ and $V(\omega
)\in\mathcal{G(X)}$ where $\mathcal{G(X)}$ is the Grassmannian of
$\mathcal{X}$.
\end{itemize}

For all $j=1,\dots,l$ and $\mathbb{P}$-a.e. $\omega\in\Omega$, we have%
\begin{align}
\mathcal{L}_{\omega}Y_{j}(\omega)  &  =Y_{j}(\sigma\omega)\text{,}\\
\mathcal{L}_{\omega}V(\omega)  &  \subseteq V(\sigma\omega)\text{,}%
\end{align}
and
\begin{align}
\lim_{s\rightarrow\infty}\frac{1}{s}\log\Vert\mathcal{L}_{\omega}^{(s)}y\Vert
&  =\lambda_{j}\text{, }\forall y\in Y_{j}(\omega)\backslash\{0\}\text{,}%
\label{keke}\\
\lim_{s\rightarrow\infty}\frac{1}{s}\log\Vert\mathcal{L}_{\omega}^{(s)}v\Vert
&  \leq\mathcal{K}^{\ast}\text{, }\forall v\in V(\omega)\text{.}%
\end{align}

\begin{theorem}
[{Froyland, Lloyd and Quas \cite[Theorem $17$]{FLQ}}]\label{thm:FLQ}%
\label{flq} Let $\Omega$ be a Borel subset of a separable complete metric
space, $\mathcal{F}$ the Borel sigma-algebra and $\mathbb{P}$ a Borel
probability measure. Let $\mathcal{X}$ be a Banach space. Consider a random
dynamical system $\mathcal{R}$ $=$ $(\Omega,\mathcal{F},\mathbb{P}%
,\sigma,\mathcal{X},\mathcal{L})$ with base transformation $\sigma
:\Omega\circlearrowleft$ an ergodic homeomorphism, and suppose that the
generator $\mathcal{L}:\Omega$ $\rightarrow L(\mathcal{X},\mathcal{X})$ is
$\mathbb{P}$-continuous and satisfies
\[
\int_{\Omega}\log^{+}\Vert\mathcal{L}_{\omega}\Vert d\mathbb{P(\omega)<}%
\infty\text{.}%
\]
If $\mathcal{R}$ is quasi-compact, that is, if $\mathcal{K}^{\ast}%
<\lambda^{\ast}$, then $\mathcal{R}$ admits a unique $\mathbb{P}$-continuous
Oseledets splitting.
\end{theorem}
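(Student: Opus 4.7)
The plan is to adapt the classical Oseledets argument to the semi-invertible Banach-space setting. The starting point is Kingman's subadditive ergodic theorem applied to $n \mapsto \log \|\mathcal{L}_\omega^{(n)}\|$, which together with the integrability hypothesis $\int \log^{+}\|\mathcal{L}_\omega\|\,d\mathbb{P} < \infty$ and ergodicity of $\sigma$ produces the $\mathbb{P}$-a.s. constant top Lyapunov exponent $\lambda^{*}$. Applying the same machinery to exterior powers (or equivalently to volume-like growth quantities $\log\mu_k(\mathcal{L}_\omega^{(n)})$) yields the partial sums $\lambda_1 + \cdots + \lambda_k$ and hence the remaining Lyapunov values down to $\mathcal{K}^{*}$. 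Quasi-compactness $\mathcal{K}^{*} < \lambda^{*}$ ensures that the portion of the spectrum lying strictly above $\mathcal{K}^{*}$ is discrete, with each isolated value having finite multiplicity, so that the index $l$ and the dimensions $d_j$ in \eqref{y1} are well defined.

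For the equivariant splitting, I would first define the slow subspace by
\[
V(\omega) := \Big\{ v \in \mathcal{X} : \limsup_{s \to \infty} \tfrac{1}{s}\log \|\mathcal{L}_\omega^{(s)}v\| \leq \mathcal{K}^{*} \Big\},
\]
which is a closed, forward-invariant subspace satisfying $\mathcal{L}_\omega V(\omega) \subseteq V(\sigma\omega)$ by subadditivity. For each exceptional exponent $\lambda_j$, the fast equivariant subspace $Y_j(\omega)$ is constructed using the invertibility of the base $\sigma$: one takes a Grassmannian limit of finite-dimensional subspaces of the form $\mathcal{L}_{\sigma^{-n}\omega}^{(n)}(W_n)$, where $W_n \subseteq \mathcal{X}$ is chosen so that the top $d_1 + \cdots + d_j$ growth directions after $n$ iterations dominate. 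The Cauchy property of this sequence in the Grassmannian, which yields existence of the limit, is controlled by the spectral gap $\lambda_j - \lambda_{j+1}$ (or $\lambda_l - \mathcal{K}^{*}$ at the bottom of the exceptional spectrum). Equivariance $\mathcal{L}_\omega Y_j(\omega) = Y_j(\sigma\omega)$ and the exact exponential growth rate \eqref{keke} are built into the construction, and uniqueness follows from characterizing $Y_j(\omega)$ intrinsically as the set of vectors whose forward cocycle growth rate is exactly $\lambda_j$.

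The $\mathbb{P}$-continuity of $\mathcal{L}$ together with the Polish assumption on $\Omega$ ensures that the maps $\omega \mapsto Y_j(\omega)$ and $\omega \mapsto V(\omega)$ are measurable, and in fact $\mathbb{P}$-continuous, into the appropriate Grassmannian; this is what makes the splitting "$\mathbb{P}$-continuous" in the sense stated in the theorem and yields uniqueness.

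The hard part is the semi-invertibility. In the classical invertible MET one defines each Oseledets space via two-sided limits on the cocycle, but here only $\sigma$ is invertible while the operators $\mathcal{L}_\omega$ are not, and the ambient space is a Banach space with no inner-product structure to fall back on for an SVD. One must therefore manufacture the finite-dimensional fast subspaces purely from forward iterates along the backward orbit of the base, and the key technical step is to show that the corresponding sequence of $d_j$-dimensional subspaces is Cauchy in the Grassmannian. This contraction estimate is precisely where the full strength of the quasi-compactness gap $\mathcal{K}^{*} < \lambda^{*}$ is used, and it constitutes the technical core of the FLQ argument.
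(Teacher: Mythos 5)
First, note that the paper does not prove this statement: it is quoted verbatim from Froyland, Lloyd and Quas \cite[Theorem 17]{FLQ} and used as a black box --- the authors only verify its hypotheses ($\mathbb{P}$-continuity of the generator, the integrability condition, and quasi-compactness via Theorem \ref{quco}) before invoking it in Corollary \ref{coro1}. So there is no in-paper proof to compare yours against; the relevant comparison is with the argument in \cite{FLQ} itself.

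Your outline does track the strategy of that reference reasonably faithfully: Kingman's subadditive theorem for $\lambda^{*}$, volume-growth quantities for the lower exceptional exponents, a slow space defined by a growth-rate characterization, and fast spaces obtained as Grassmannian limits of $\mathcal{L}_{\sigma^{-n}\omega}^{(n)}(W_n)$ along backward orbits of the base --- this last point being exactly the ``semi-invertible'' innovation. But as a proof it is only a plan: the two decisive steps are asserted rather than established. (i) That quasi-compactness forces the exponents above $\mathcal{K}^{*}$ to be isolated with finite multiplicity is itself a substantial portion of the Banach-space multiplicative ergodic theorem, requiring the index-of-compactness and volume-growth machinery; it is not a formal consequence of $\mathcal{K}^{*}<\lambda^{*}$. (ii) The Cauchy estimate in the Grassmannian for the pushed-forward subspaces is the heart of the matter, and you yourself flag it as ``the technical core of the FLQ argument'' without carrying it out; note also that in a general Banach space there is no singular value decomposition, so even choosing the subspaces $W_n$ and controlling the angle between fast and slow directions requires care. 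A smaller point: uniqueness of the splitting comes from the intrinsic growth-rate characterization of the spaces $Y_j(\omega)$, not from $\mathbb{P}$-continuity as you suggest; $\mathbb{P}$-continuity of $\omega\mapsto Y_j(\omega)$ is part of the conclusion, not the mechanism for uniqueness. If the task was to reprove the cited theorem, these gaps are real; if it was to justify its availability in this setting, citing \cite{FLQ} as the paper does is the appropriate move.
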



By Theorem~\ref{quco}, admissible random Jab\l o\'{n}ski maps give rise to
quasi-compact random dynamical systems with $\lambda_{1}=0$. Therefore,
Theorem~\ref{thm:FLQ} implies the following.

\begin{corollary}
\label{coro1} For $\mathbb{P}$-a.e. $\omega\in\Omega$, the random invariant
density $h_{\omega}$ given in \eqref{hw} belongs to the Oseledets space
$Y_{1}(\omega)$ given in \eqref{y1}. Moreover, the number $r$ of ergodic ACIPs
$\mu_{1},\dots,\mu_{r}$ with respect to the associated skew product is finite;
indeed, we have
\begin{equation}
r\leq d_{1}=\dim(Y_{1}(\omega))\text{.} \label{easybd}%
\end{equation}

\begin{proof}
Let $\omega\in\Omega$, by the equivariance property given in \eqref{eqivar},
we have $\mathcal{L}_{\omega}^{(m)}h_{\omega}=h_{\sigma^{m}\omega}$, for $m\in%
\mathbb{N}
$. To show that $h_{\omega}\in Y_{1}(\omega)$, we verify the limit condition
given in \eqref{keke} for $j=1$. Note that%

\begin{align*}
&  \lim_{m\rightarrow\infty}\frac{1}{m}\log\Vert\mathcal{L}_{\omega}%
^{(m)}h_{\omega}\Vert_{BV}\\
&  =\lim_{m\rightarrow\infty}\frac{1}{m}\log\Vert h_{\sigma^{m}\omega}%
\Vert_{BV}\\
&  \geq\lim_{m\rightarrow\infty}\frac{1}{m}\log\Vert h_{\sigma^{m}\omega}%
\Vert_{1}=0=\lambda^{\ast}\text{,}%
\end{align*}
on the other hand%
\begin{align*}
&  \lim_{m\rightarrow\infty}\frac{1}{m}\log\Vert\mathcal{L}_{\omega}%
^{(m)}h_{\omega}\Vert_{BV}\\
&  \leq\lim_{m\rightarrow\infty}\frac{1}{m}\log\Vert\mathcal{L}_{\omega}%
^{(m)}\Vert_{BV}=0=\lambda^{\ast}\text{,}%
\end{align*}
by Theorem \ref{quco}. Since the splitting in Theorem \ref{flq} is unique,
this gives that $h_{\omega}\in Y_{1}(\omega)$. By the finite dimensionality of
the leading Oseledets subspace $Y_{1}(\omega)$, we get the bound given in \eqref{easybd}.
\end{proof}
\end{corollary}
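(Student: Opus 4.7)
The proof plan rests on the Oseledets splitting for the random dynamical system generated by $\mathscr{F}$, whose existence is guaranteed by Theorem~\ref{thm:FLQ} of Froyland--Lloyd--Quas. The hypotheses of that theorem have already been secured: quasi-compactness and $\lambda^{\ast} = 0$ are Theorem~\ref{quco}, $\mathbb{P}$-continuity of $\omega\mapsto\mathcal{L}_\omega$ is part of Definition~\ref{arjm}, and the log-integrability assumption $\int_\Omega \log^+\|\mathcal{L}_\omega\|_{BV} \, d\mathbb{P}(\omega) < \infty$ is standing. Thus for $\mathbb{P}$-a.e.\ $\omega$ we obtain a splitting $BV(I^n) = Y_1(\omega) \oplus \cdots \oplus Y_l(\omega) \oplus V(\omega)$ with isolated exponents $0 = \lambda_1 > \lambda_2 > \cdots > \lambda_l > \mathcal{K}^\ast$.

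The first task is to place $h_\omega$ in $Y_1(\omega)$. I would exploit the equivariance $\mathcal{L}_\omega^{(m)} h_\omega = h_{\sigma^m\omega}$ from \eqref{eqivar}, which turns the question into a two-sided growth estimate for $\|h_{\sigma^m\omega}\|_{BV}$. On the one hand, $\|h_{\sigma^m\omega}\|_{BV} \geq \|h_{\sigma^m\omega}\|_1 = 1$, so the liminf of $\frac{1}{m}\log\|\mathcal{L}_\omega^{(m)} h_\omega\|_{BV}$ is at least $0$. On the other hand, $\|\mathcal{L}_\omega^{(m)} h_\omega\|_{BV} \leq \|\mathcal{L}_\omega^{(m)}\|_{BV}\|h_\omega\|_{BV}$, and Theorem~\ref{quco}(ii) gives $\lambda^\ast = 0$, so the limsup is at most $0$. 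The growth rate along $h_\omega$ is therefore exactly $\lambda_1 = 0$, and since $Y_1(\omega)$ is the unique Oseledets subspace on which this rate is realized (vectors in any $Y_j$ with $j \geq 2$ grow at rate $\lambda_j < 0$ and vectors in $V(\omega)$ grow at rate $\leq \mathcal{K}^\ast < 0$), we conclude $h_\omega \in Y_1(\omega)$.

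The bound on the number of ergodic ACIPs then comes from a linear independence argument. Any ergodic ACIP $\mu_i$ for the skew product $F$ disintegrates as $d\mu_i = h^{(i)}_\omega \, dm \, d\mathbb{P}$, where $\{h^{(i)}_\omega\}$ is an equivariant family of random densities of bounded variation (the same argument that produced $h_\omega$ applies to the fiber densities of $\mu_i$, since Theorem~\ref{muer} only used equivariance and the hybrid Lasota-Yorke inequality \eqref{hlyi}). By the first step, each $h^{(i)}_\omega$ lies in $Y_1(\omega)$ for $\mathbb{P}$-a.e.\ $\omega$. Mutual singularity of $\mu_1,\dots,\mu_r$ yields, via Fubini, $\mathbb{P}$-a.e.\ mutual singularity of the fiber measures, whence for a.e.\ $\omega$ the densities $h^{(1)}_\omega,\dots,h^{(r)}_\omega$ have pairwise disjoint supports and are therefore linearly independent in $Y_1(\omega)$. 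Hence $r \leq \dim Y_1(\omega) = d_1$.

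The main technical obstacle I anticipate is the verification that $h_\omega \in Y_1(\omega)$ using only the one-sided bounds coming from $\|\cdot\|_1 \leq \|\cdot\|_{BV}$ and $\lambda^\ast = 0$; everything else is a clean application of the uniqueness of the Oseledets splitting and the disintegration of mutually singular skew-product measures. The finiteness of $d_1$ (hence of $r$) is part of the conclusion of Theorem~\ref{thm:FLQ}, and does not need to be re-established here.
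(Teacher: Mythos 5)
Your proposal is correct and follows essentially the same route as the paper: the membership $h_\omega\in Y_1(\omega)$ is obtained from exactly the same two-sided growth estimate (lower bound via $\Vert\cdot\Vert_{1}\leq\Vert\cdot\Vert_{BV}$ and $\Vert h_{\sigma^{m}\omega}\Vert_{1}=1$, upper bound via $\lambda^{\ast}=0$ from Theorem~\ref{quco}) together with uniqueness of the splitting from Theorem~\ref{thm:FLQ}. For the bound $r\leq d_{1}$ you actually supply more detail than the paper, which only invokes finite dimensionality of $Y_{1}(\omega)$; your disintegration-plus-linear-independence argument via mutual singularity of the fiber measures is the standard way to make that step precise and is consistent with the authors' intent.
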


Next, we define physical measures and show how the measures given in Corollary
\ref{coro1} are physical measures.

\begin{definition}
\label{phme} Consider the tuple $(\Omega,\mathcal{F},\mathbb{P},\sigma,f)$
where $(\Omega,\mathcal{F},\mathbb{P)}$ is a probability space, $\sigma
:\Omega\circlearrowleft$ an invertible, ergodic and $\mathbb{P-}$preserving
transformation and $f=\{f_{\omega}:M\rightarrow M\}_{\omega\in\Omega}$ where
$M\subseteq%
\mathbb{R}
^{n}$. A probability measure $\nu$ on $M$ is called \emph{physical} if for
$\mathbb{P}$-a.e. $\omega\in\Omega$, the Lebesgue measure of the random basin
$RB_{\omega}(\nu)$ of $\nu$ at $\omega$ is positive where%
\[
RB_{\omega}(\nu)=\{x\in M:\frac{1}{s}\sum_{k=0}^{s-1}\delta_{f_{\omega}%
^{(k)}(x)}\rightarrow\nu\}\text{,}%
\]
where $\delta_{x}$ is the Dirac measure at a point $x$.
\end{definition}

The convergence in Definition \ref{phme} is in the weak convergence sense. In
the case where $f_{\omega}$ is independent of $\omega$, this reduces to the
definition of physical measure for a deterministic dynamical system. The next
probabilistic result due to Buzzi applies in our setting.

\begin{theorem}
[{Buzzi \cite[Proposition $4.1$]{B}}]\label{ph} Let $\mu_{i}$ be one of the
measures $\mu_{i}:$ $i=1,\dots r$ given in Corollary \ref{coro1}. Then, the
marginal measure of $\mu_{i}$ on $I^{n}$, denoted by $\nu_{i}$, is a physical
measure on $I^{n}$.
\end{theorem}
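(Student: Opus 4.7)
\emph{The plan is} to apply the Birkhoff ergodic theorem to the $F$-invariant, $F$-ergodic measure $\mu_i$ on $\Omega \times I^n$ and then push the resulting convergence of empirical measures down to the fibre $I^n$. Since $I^n$ is compact, $C(I^n)$ is separable; fix a countable dense set $\{\psi_j\}_{j \in \mathbb{N}} \subset C(I^n)$ and lift each $\psi_j$ to $\Phi_j(\omega,x) := \psi_j(x)$, which lies in $L^\infty(\mu_i) \subset L^1(\mu_i)$. By Corollary \ref{coro1} the measure $\mu_i$ is ergodic for $F$, so Birkhoff produces a $\mu_i$-null set $N_j$ off which
\[
\frac{1}{s}\sum_{k=0}^{s-1}\psi_j(f_\omega^{(k)}(x)) = \frac{1}{s}\sum_{k=0}^{s-1}\Phi_j(F^k(\omega,x)) \longrightarrow \int \Phi_j\, d\mu_i = \int \psi_j\, d\nu_i.
\]
Setting $N := \bigcup_j N_j$, a standard uniform-approximation argument using the density of $\{\psi_j\}$ in $C(I^n)$ shows that for every $(\omega,x) \notin N$ the empirical measures $\frac{1}{s}\sum_{k=0}^{s-1}\delta_{f_\omega^{(k)}(x)}$ converge weakly to $\nu_i$; equivalently, $x \in RB_\omega(\nu_i)$.

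Next I would disintegrate $\mu_i$ along the fibres. The first marginal $(\pi_1)_*\mu_i$ is $\sigma$-invariant (since $\pi_1 \circ F = \sigma \circ \pi_1$) and absolutely continuous with respect to $\mathbb{P}$ (since $\mu_i \ll \mathbb{P} \times m$), hence equals $\mathbb{P}$ by ergodicity of $\sigma$. Consequently, as in Remark \ref{re5},
\[
d\mu_i(\omega, x) = h_{i,\omega}(x)\, dm(x)\, d\mathbb{P}(\omega),
\]
for a measurable family of fibrewise Radon--Nikodym densities $h_{i,\omega} \geq 0$ with $\int h_{i,\omega}\, dm = 1$. Applying Fubini to the relation $\mu_i(N) = 0$ yields, for $\mathbb{P}$-a.e.\ $\omega \in \Omega$,
\[
\int_{N_\omega} h_{i,\omega}(x)\, dm(x) = 0, \qquad N_\omega := \{x \in I^n : (\omega,x) \in N\},
\]
so $h_{i,\omega} = 0$ Lebesgue-a.e.\ on $N_\omega$, i.e.\ $\{h_{i,\omega} > 0\} \subseteq RB_\omega(\nu_i)$ up to a Lebesgue null set.

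To conclude, since $h_{i,\omega} \geq 0$ and $\int h_{i,\omega}\, dm = 1$, the set $\{h_{i,\omega} > 0\}$ has strictly positive Lebesgue measure for $\mathbb{P}$-a.e.\ $\omega$, whence $m(RB_\omega(\nu_i)) > 0$, verifying Definition \ref{phme}. The only conceptually delicate point---and the main potential obstacle---is the transfer from a $\mu_i$-null exceptional set (supplied by Birkhoff) to one whose Lebesgue slices are null: this is exactly where absolute continuity of the random invariant measures with respect to Lebesgue, secured by Theorem \ref{muer} and Corollary \ref{coro1}, does the work. Beyond this, the argument reproduces Buzzi's proof of \cite[Proposition $4.1$]{B} in the higher-dimensional Jab\l o\'{n}ski setting without further modification.
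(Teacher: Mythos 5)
Your proof is correct and is essentially the argument of Buzzi's Proposition 4.1, which the paper invokes by citation without reproducing: apply Birkhoff's ergodic theorem to the $F$-ergodic measure $\mu_i$ along a countable dense family of continuous test functions, then use the disintegration $d\mu_i=h_{i,\omega}\,dm\,d\mathbb{P}$ and Fubini to convert the $\mu_i$-null exceptional set into fibrewise Lebesgue-null sets on $\{h_{i,\omega}>0\}$. All the steps (the first marginal being $\mathbb{P}$, the uniform-approximation upgrade to weak convergence, and the positivity of $m(\{h_{i,\omega}>0\})$) are justified as you state them.
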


The union of all basins of the of the physical measures $\nu_{i}$ coming from
the marginals of $\mu_{i}$ on $I^{n}$, $i=1,\dots r$\ has full Lebesgue
measure, which means Lebesgue almost everywhere, the asymptotic long term
behaviour of the random orbits will be described by one of these physical
measures. Another immediate consequence of the proof of Theorem~\ref{ph} is
the following.

\begin{corollary}
There exists a constant $b>0$ such that for $\mathbb{P}$-a.e. $\omega\in
\Omega$ and $i=1,\dots r$, $m(RB_{\omega}(\nu_{i}))>b$.
\end{corollary}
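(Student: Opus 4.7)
The plan is to read off the uniform lower bound by tracking quantitative information through the proof of Theorem~\ref{ph}. The argument of \cite[Proposition~4.1]{B} applies Birkhoff's ergodic theorem to the skew product $F$ with respect to the ergodic ACIP $\mu_i$, producing an $F$-invariant set $\mathcal{B}_i\subset\Omega\times I^n$ with $\mu_i(\mathcal{B}_i)=1$ whose $\omega$-fiber is exactly $RB_\omega(\nu_i)$. Disintegrating $\mu_i$ over $\mathbb{P}$ yields $\mu_{i,\omega}(RB_\omega(\nu_i))=1$ for $\mathbb{P}$-a.e.\ $\omega$, so $RB_\omega(\nu_i)$ contains, up to Lebesgue-null sets, the support of the density $h_{i,\omega}$ given by Theorem~\ref{muer}.

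Next, I would exploit the equivariance $RB_\omega(\nu_i)=f_\omega^{-1}(RB_{\sigma\omega}(\nu_i))$ modulo Lebesgue-null sets, which holds because the empirical averages at $(\omega,x)$ and at $(\sigma\omega,f_\omega(x))$ differ by the vanishing term $\tfrac{1}{s}(\delta_{f_\omega^{(s)}(x)}-\delta_x)$. Combined with the equivariance $\mathcal{L}_\omega h_{i,\omega}=h_{i,\sigma\omega}$ of the densities themselves, this shows that the essential supports $\mathrm{supp}(h_{i,\omega})$ propagate covariantly along random orbits and that their Lebesgue measures $\phi_i(\omega):=m(RB_\omega(\nu_i))$ satisfy the transfer relation $\phi_i(\omega)=\int \mathbf{1}_{RB_{\sigma\omega}(\nu_i)}\,\mathcal{L}_\omega 1\,dm$.

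To obtain the uniform constant $b>0$, I would adapt the openness-of-support argument of \cite[Theorem~2]{GBP} to the random Jab\l o\'{n}ski setting via the common refinement $\mathcal{B}$ in Remark~\ref{re2}: one shows that $\mathrm{supp}(h_{i,\omega})$ is, up to null sets, a union of rectangles drawn from this fixed finite partition, so its Lebesgue measure is bounded below by the size of the smallest such rectangle, a quantity that depends only on the partition and not on $\omega$ or $i$. Taking $b$ to be this minimum rectangle size (across all $i=1,\dots,r$) yields the statement. The main obstacle is showing that the openness-of-support argument of \cite{GBP} extends $\mathbb{P}$-almost surely to the random setting with an $\omega$-uniform estimate; this requires verifying that the crossing-point combinatorics of the iterated random maps are controlled by the common refinement $\mathcal{B}$ along $\mathbb{P}$-typical orbits, and that the resulting rectangular structure of $\mathrm{supp}(h_{i,\omega})$ is preserved under the equivariance $\mathcal{L}_\omega h_{i,\omega}=h_{i,\sigma\omega}$ established in Theorem~\ref{muer}.
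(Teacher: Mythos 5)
Your first two steps are sound and are exactly what the proof of Theorem~\ref{ph} (Buzzi's Proposition~4.1 in \cite{B}) delivers: Birkhoff's theorem for $F$ with respect to the ergodic measure $\mu_{i}$ gives $\mu_{i,\omega}(RB_{\omega}(\nu_{i}))=1$ for $\mathbb{P}$-a.e.\ $\omega$, hence $\{h_{i,\omega}>0\}\subseteq RB_{\omega}(\nu_{i})$ up to $m$-null sets, and the basins are equivariant under $f_{\omega}$. The problem is your third step, which is where the constant $b$ is actually supposed to come from. The claim that $\mathrm{supp}(h_{i,\omega})$ is (mod null sets) a union of rectangles of the fixed common partition $\mathcal{B}$ is not what \cite[Theorem~2]{GBP} provides and is false in general: that theorem only shows the support is open Lebesgue-almost everywhere. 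The support is generated by forward images $f_{\sigma^{-k}\omega}^{(k)}(B)$ of partition rectangles, and the faces of these images are forward orbits of partition points, not partition points themselves (already in one dimension the support of an invariant density of a Lasota--Yorke map is a union of intervals whose endpoints need not belong to the defining partition). Moreover, openness alone buys nothing here, since an open set can have arbitrarily small Lebesgue measure. You flag this as ``the main obstacle,'' but it is precisely the content of the corollary, so as written the proof is incomplete at the only step that produces $b$.

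The route the paper intends is analytic rather than geometric: since $h_{i,\omega}=0$ a.e.\ off $RB_{\omega}(\nu_{i})$, one has $1=\int h_{i,\omega}\,dm\leq\|h_{i,\omega}\|_{\infty}\,m(RB_{\omega}(\nu_{i}))$, so the basin's measure is bounded below by $\|h_{i,\omega}\|_{\infty}^{-1}$, and the required control of the densities comes from the bounded-variation estimates in the proof of Theorem~\ref{muer} (iterating the hybrid Lasota--Yorke inequality \eqref{hlyi}), which is the quantitative information the phrase ``consequence of the proof of Theorem~\ref{ph}'' is pointing to; the uniformity in $\omega$ and $i$ is then extracted from this control of the finitely many density families rather than from any rectangular structure of their supports. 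If you wish to keep your geometric approach, you would have to prove a genuinely new statement about the supports being unions of uniformly large rectangles, which neither \cite{GBP} nor anything in this paper supplies.
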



\section{Bounds on the number of ergodic skew product ACIPs\label{S:bounds}}

A difficulty in the general study of ACIPs of piecewise expanding maps in
higher dimensions is that the geometric complexity around discontinuities or
interior crossing points might grow rapidly as the dynamical partitions are
refined \cite{noorinf}. This is in contrast to one-dimensional maps, where the
geometry is much simpler and such a complexity growth can not happen. However,
this complication does not occur in the context of random Jab\l o\'{n}ski
maps. In \cite{GBP}, G\'{o}ra, Boyarsky and Proppe proved that for a class of
deterministic dynamical systems modeled by Jab\l o\'{n}ski transformations,
the number of crossing points gives an upper bound for the number of ergodic ACIPs.

In this section, we establish bounds on the number of ergodic ACIPs for random
Jab\l o\'{n}ski maps. The first bound, presented in Section~\ref{S:1stbound},
is motivated by the work of Buzzi \cite{B} in the one dimensional case of
random Lasota-Yorke maps. The second bound, presented in
Section~\ref{S:2ndbound} is inspired by the work of G\'{o}ra, Boyarsky and
Proppe on absolutely continuous invariant measures for deterministic dynamical
systems given by multidimensional expanding maps \cite{GBP}. An example is
presented in Section~\ref{S:ex}.

Let $\mathscr{F}=\{f_{\omega}\}_{\omega\in\Omega}$ be an admissible random
Jab\l o\'{n}ski map. Suppose that there exist $r$ mutually singular ergodic
ACIPs $\mu_{1},\dots,\mu_{r}$ for the associated skew product map $F$. Fix
$i\in\{1,\dots,r\}$ and $\omega\in\Omega$, then the fiber measure $\mu
_{\omega}^{i}$ is a measure on $I^{n}$. By Theorem $2$ in \cite{GBP}, the
support $\text{Supp}(\mu_{\omega}^{i})$\ of $\mu_{\omega}^{i}$ is open
Lebesgue almost everywhere. This fact was before introduced in Keller's thesis
\cite{Ke}. Let $I_{i,\omega}(0)\subseteq\text{Supp}(\mu_{\omega}^{i})$ be a
nontrivial rectangle lying inside one of the rectangles of $\mathcal{B}%
^{\omega}$. Define the sequence%
\begin{equation}
I_{i,\omega}(s+1)=f_{\sigma^{s}\omega}(I_{i,\omega}(s))\cap J\text{, }s\in%
\mathbb{N}
\cup\{0\}\text{,} \label{Push}%
\end{equation}
where $J$ is the open rectangle in the partition $\mathcal{B}^{\sigma
^{s+1}\omega}$ of the Jab\l o\'{n}ski map $f_{\sigma^{s+1}\omega}$ which
maximizes the Lebesgue measure of $I_{i,\omega}(s+1)$. For $s\in%
\mathbb{N}
\cup\{0\}$, define $c_{i,\omega}(s)$ to be the number of crossing points in
the partition $\mathcal{B}^{\sigma^{s+1}\omega}$ lying inside the image
$f_{\sigma^{s}\omega}(I_{i,\omega}(s))$. Let
\begin{equation}
M(\omega)=\max_{z\in%
\mathbb{R}
}\max_{d=1,\dots,n}\{\text{number of rectangles }B\in\mathcal{B}^{\sigma
\omega}\text{ s.t. }H_{n-1}^{(d)}(z)\cap\,\text{Int}(B)\neq\phi\}\text{,}
\label{eq:Mom}%
\end{equation}
where $H_{n-1}^{(d)}(z)$ is the $(n-1)$ dimensional hyperplane given by the
equation $x_{d}=z$. This definition of $M$ is motivated by a deterministic
analogue, Definition $3$ in \cite{GBP}.

For $i=1,\dots,r$, denote by
\[
D_{i}=\{\omega\in\Omega:\text{Supp}(\mu_{\omega}^{i})\text{ has a crossing
point in its interior}\}\text{.}%
\]
Also, let
\begin{equation}
\mathbb{\gamma}(\omega)=%
{\displaystyle\prod\limits_{i=1}^{n}}
\mathbb{\gamma}_{i}(\omega)\text{,} \label{shedo}%
\end{equation}
and $\mathbb{\gamma}_{i}(\omega)$ is defined in equation \eqref{fec}.

\subsection{Multidimensional bound \`a la Buzzi}

\label{S:1stbound} In this section, we assume the following.
\begin{equation}
\delta:=%
{\displaystyle\int\limits_{\Omega}}
\log(\frac{\mathbb{\gamma}(\omega)}{M(\omega)})d\mathbb{P(\omega)>}0\text{,}
\label{edp}%
\end{equation}
This condition means that, on average, the fiber expansion constants dominate
the partition complexities.

\begin{lemma}
\label{L51} Let $\mathscr{F}=\{f_{\omega}\}_{\omega\in\Omega}$ be an
admissible random Jab\l o\'{n}ski map and assume that \eqref{edp} is
satisfied. Then, the number $r$ of mutually singular ergodic ACIPs for the
associated skew product map $F$ satisfies
\begin{equation}
\int_{\Omega}\log\big(2^{n-1}(\frac{c_{t}(\omega)}{r}+1)\big)d\mathbb{P(\omega
)\geq\delta}\text{.} \label{maryam}%
\end{equation}

\end{lemma}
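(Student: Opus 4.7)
The plan is to adapt Buzzi's one-dimensional counting argument (Section 3 of \cite{B}) to the rectangular geometry of admissible random Jab\l o\'{n}ski maps, by tracking along a typical $\sigma$-orbit the Lebesgue measure of the seed rectangles $I_{i,\omega}(s)$ from \eqref{Push} for each of the $r$ mutually singular ergodic fiber supports. Since $m(I_{i,\omega}(s))\leq 1$ uniformly, any multiplicative lower bound on $m(I_{i,\omega}(s+1))/m(I_{i,\omega}(s))$ will, after taking logarithms and time-averaging, force a nonpositive integral; the inequality \eqref{maryam} will be exactly the rearrangement of this statement via the definition \eqref{edp} of $\delta$.

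First I would establish a one-step expansion estimate. Because $I_{i,\omega}(s)$ lies in a single cell of $\mathcal{B}^{\sigma^s\omega}$, the Jab\l o\'{n}ski map $f_{\sigma^s\omega}$ restricts to a product of $n$ monotone $C^2$ components with coordinate-wise expansion at least $\gamma_i(\sigma^s\omega)$ from \eqref{fec}, giving $m\bigl(f_{\sigma^s\omega}(I_{i,\omega}(s))\bigr)\geq \gamma(\sigma^s\omega)\, m(I_{i,\omega}(s))$ with $\gamma$ as in \eqref{shedo}. The image is then sliced by $\mathcal{B}^{\sigma^{s+1}\omega}$ into sub-rectangles, and a geometric count bounds the number of these by $M(\sigma^{s+1}\omega)\cdot 2^{n-1}(c_{i,\omega}(s)+1)$: each interior crossing point spawns cuts through $n$ coordinate hyperplanes producing up to $2^{n-1}$ incident sub-rectangles, while $M$ (from \eqref{eq:Mom}) controls the number of distinct cells any single hyperplane can meet. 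Retaining the largest piece $J$ yields
$$m(I_{i,\omega}(s+1)) \;\geq\; \frac{\gamma(\sigma^s\omega)}{M(\sigma^{s+1}\omega)\cdot 2^{n-1}\bigl(c_{i,\omega}(s)+1\bigr)}\, m(I_{i,\omega}(s)).$$

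For each fixed $i\in\{1,\dots,r\}$ I would then take logarithms, sum from $s=0$ to $T-1$, divide by $T$, and apply Birkhoff's ergodic theorem to the $\sigma$-invariant probability $\mathbb{P}$, using $m(I_{i,\omega}(T))\leq 1$ to kill the left-hand side in the limit. This yields
$$0 \;\geq\; \int_\Omega \log\frac{\gamma(\omega)}{M(\omega)}\, d\mathbb{P} \;-\; \int_\Omega \log\!\bigl(2^{n-1}(c_{i,\omega}+1)\bigr)\, d\mathbb{P},$$
where the integrand on the right is the $\mathbb{P}$-expectation produced by Birkhoff. Mutual singularity of $\mu_1,\dots,\mu_r$ implies that the images $f_{\sigma^s\omega}(I_{i,\omega}(s))$ for different $i$ lie in essentially disjoint regions of $I^n$, so the per-step crossing counts satisfy $\sum_{i=1}^r c_{i,\omega}(s)\leq c_t(\sigma^{s+1}\omega)$. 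Averaging the previous estimate over $i=1,\dots,r$ and invoking concavity of $\log$ (Jensen's inequality) replaces $\frac{1}{r}\sum_i \log(c_{i,\omega}+1)$ by the upper bound $\log\bigl(c_t(\omega)/r+1\bigr)$, producing \eqref{maryam} after recognising $\delta$ in \eqref{edp}.

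The main obstacle I anticipate is the combinatorial-geometric sub-rectangle count: in one dimension it is trivial that $c$ interior cut-points produce $c+1$ intervals, but in $n$ dimensions the hyperplane structure of $\mathcal{B}^{\omega}$ and the way each crossing point ``fans out'' into adjacent partition cells is genuinely geometric, and is precisely what forces the factors $2^{n-1}$ and $M(\omega)$ to appear, making the expanding-on-average hypothesis \eqref{edp} on the ratio $\gamma/M$ unavoidable. A secondary subtlety is ensuring that the Birkhoff step is uniform in the choice of seed $I_{i,\omega}(0)$, which I would handle by choosing the seed measurably in $\omega$ inside $\mathrm{Supp}(\mu_\omega^i)$ using the openness result of \cite{GBP} and the $BV$-regularity of $h_\omega^i$ from Theorem \ref{muer}.
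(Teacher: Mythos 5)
Your proposal is correct and follows essentially the same route as the paper: lower-bound $m(I_{i,\omega}(s))$ multiplicatively by the expansion $\gamma$ divided by a sub-rectangle count involving $2^{n-1}(c_{i,\omega}+1)$ and $M$, use $m(I_{i,\omega}(s))\leq 1$ together with Birkhoff, then average over $i$, bounding $\sum_{i}c_{i,\omega}$ by $c_{t}$ via mutual singularity and applying concavity of $\log$ (the paper phrases this as the arithmetic--geometric mean inequality, and splits the one-step bound into the two cases $\omega\in D_{i}$ versus $\omega\notin D_{i}$ rather than combining both factors, but the resulting integrand is the same). The only point to tighten is to perform the averaging over $i$ and the replacement of $c_{i,\omega}(k)$ by $c_{t}(\sigma^{k}\omega)/r+1$ at finite time, before invoking Birkhoff, since $c_{i,\omega}(k)$ is history-dependent and not itself a fixed observable evaluated at $\sigma^{k}\omega$.
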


\begin{proof}
First we show that at least one of the sets in
\begin{equation}
f_{\sigma^{s}\omega}(I_{i,\omega}(s))\text{,} \quad s\in\mathbb{N }\cup\{0\}
\label{sets}%
\end{equation}
has a crossing point in its interior. The argument proceeds by contradiction.
Suppose that for none of the sets in \eqref{sets}
has a crossing point in the interior. Then,
\begin{align}
m(I_{i,\omega}(s+1))  &  \geq\frac{\mathbb{\gamma}(\sigma^{s}\omega)}%
{M(\sigma^{s}\omega)}m(I_{i,\omega}(s))\nonumber\\
&  \geq\frac{\mathbb{\gamma}(\sigma^{s}\omega)}{M(\sigma^{s}\omega)}%
\text{.}\dots\text{.}\frac{\mathbb{\gamma}(\omega)}{M(\omega)}m(I_{i,\omega
}(0))\text{.}\nonumber
\end{align}
By \eqref{edp}, we have $\delta=\int_{\Omega}\log(\frac{\mathbb{\gamma}%
(\omega)}{M(\omega)})d\mathbb{P(\omega)>}0$. Hence, Birkhoff ergodic theorem
implies that $m(I_{i,\omega}(s+1))\rightarrow\infty$ as $s\rightarrow\infty$,
and this\ is a contradiction. Hence, at least one of the sets in \eqref{sets}
has a crossing point in its interior.

For $k=0,1,2,3,\dots$ and $\omega\in\Omega$, define%
\[
g_{i,k}(\omega)=\left\{
\begin{array}
[c]{cc}%
\begin{array}
[c]{c}%
\frac{\mathbb{\gamma}(\sigma^{k}\omega)}{2^{n-1}(c_{i,\omega}(k)+1)}\\
\\
\end{array}
&
\begin{array}
[c]{c}%
:\sigma^{k}\omega\in D_{i}\\
\\
\end{array}
\\
\frac{\mathbb{\gamma}(\sigma^{k}\omega)}{M(\sigma^{k}\omega)} & :\sigma
^{k}\omega\in\Omega\backslash D_{i}%
\end{array}
\right.  \text{.}%
\]
By equation \eqref{Push}, for $s\in%
\mathbb{N}
$, $I_{i,\omega}(s)$ comes from evolving $I_{i,\omega}(s-1)$ by the map
$f_{\sigma^{s-1}\omega}$ and then taking the largest intersection of its image
with one of the partition rectangles of $\mathcal{B}^{\sigma^{s}\omega}$.
Therefore, the volume of $I_{i,\omega}(s)$ depends on whether the set
$f_{\sigma^{s-1}\omega}(I_{i,\omega}(s-1))$ has a crossing point in its
interior or not. In case the interior of this set has a crossing point, the
volume of $I_{i,\omega}(s)$ is bounded below by the volume of $I_{i,\omega
}(s-1)$ expanded by $\mathbb{\gamma}(\sigma^{s-1}\omega)$\ and scaled by
$2^{n-1}(c_{i,\omega}(s-1)+1)$. This last scaling term is an upper bound on
the number of rectangles of $\mathcal{B}^{\sigma^{s}\omega}$ meeting
$f_{\sigma^{s-1}\omega}(I_{i,\omega}(s-1))$. On the other hand, if the
interior of $f_{\sigma^{s-1}\omega}(I_{i,\omega}(s-1))$ has no crossing
points, the volume of $I_{i,\omega}(s)$ is bounded below by the volume of
$I_{i,\omega}(s-1)$ expanded by $\mathbb{\gamma}(\sigma^{s-1}\omega)$\ and
scaled by $M(\sigma^{s-1}\omega)$. Thus, in general,%
\[
m(I_{i,\omega}(s))\geq g_{i,s-1}(\omega)m(I_{i,\omega}(s-1))\text{.}%
\]
Therefore, inductively, we have%
\begin{equation}
m(I_{i,\omega}(s))\geq g_{i,s-1}(\omega)\text{.}\dots\text{.}g_{i,0}%
(\omega)m(I_{i,\omega}(0))\text{.} \label{ala}%
\end{equation}
Since $m(I_{i,\omega}(s))\leq1$, for all $s=1,2,3,\dots$, we have%
\begin{equation}
\sum_{k=0}^{s-1}\log(\frac{1}{g_{i,k}(\omega)})\geq\log(m(I_{i,\omega
}(0))\text{.} \label{e1}%
\end{equation}
By summing over $i=1,\dots,r$ and dividing by $r$, we get%
\[
\sum_{k=0}^{s-1}\frac{1}{r}\sum_{i=1}^{r}\log(\frac{1}{g_{i,k}(\omega)}%
)\geq\xi\text{,}%
\]
where $\xi:=\frac{1}{r}\sum_{i=1}^{r}\log(m(I_{i,\omega}(0))$. This gives
that
\begin{equation}
\sum_{k=0}^{s-1}\log(\frac{1}{(g_{1,k}(\omega)\text{.}\dots\text{.}%
g_{r,k}(\omega))^{\frac{1}{r}}})\geq\xi\text{.} \label{beforedividing}%
\end{equation}
Since the measures $\mu_{i}$ are mutually singular, for all $\omega\in\Omega$,
we have%

\[
c_{1,\omega}(k)+\dots+c_{r,\omega}(k)\leq c_{t}(\sigma^{k}\omega)\text{,}%
\]
where we recall that $c_{t}(\omega)$ is the total number of interior crossing
points in the partition $\mathcal{B}^{\sigma\omega}$ of $f_{\sigma\omega}$. By
adding $r$ to both sides, dividing by $r$ and using the arithmetic-geometric
mean inequality, we get%
\[
\Big((c_{1,\omega}(k)+1)\text{.}\dots\text{.}(c_{r,\omega}(k)+1)\Big)^{\frac
{1}{r}}\leq\frac{c_{t}(\sigma^{k}\omega)+r}{r}\text{.}%
\]
Therefore, \eqref{beforedividing} and the definition of $g_{i,k}(\omega)$
yield%
\begin{align}
&  \frac{1}{s}\sum_{k=0}^{s-1}\log\Big(\frac{2^{n-1}(\frac{c_{t}(\sigma
^{k}\omega)+r}{r})M(\sigma^{k}\omega)}{\mathbb{\gamma}(\sigma^{k}\omega
)}\Big)\nonumber\\
&  \geq\frac{1}{s}\sum_{k=0}^{s-1}\log\Big(\frac{1}{(g_{1,k}(\omega
)\text{.}\dots\text{.}g_{r,k}(\omega))^{\frac{1}{r}}}\Big)\geq\frac{\xi}%
{s}\text{.}%
\end{align}
Applying Birkhoff ergodic theorem,
we get%
\[
\int_{\Omega}\log\Big(\frac{2^{n-1}(\frac{c_{t}(\omega)}{r}+1)M(\omega
)}{\mathbb{\gamma}(\omega)}\Big)d\mathbb{P(\omega)\geq}0\text{.}%
\]
This gives that%
\[
\int_{\Omega}\log(2^{n-1}(\frac{c_{t}(\omega)}{r}+1))d\mathbb{P(\omega)}%
+\int_{\Omega}\log(\frac{M(\omega)}{\mathbb{\gamma}(\omega)})d\mathbb{P}%
(\omega)\mathbb{\geq}0\text{,}%
\]
and therefore we have
\begin{equation}
\int_{\Omega}\log\big(2^{n-1}(\frac{c_{t}(\omega)}{r}+1)\big)d\mathbb{P(\omega
)\geq\delta}\text{.}%
\end{equation}

\end{proof}

Lemma~\ref{L51} may be used to obtain explicit bounds on $r$.

\begin{lemma}
\label{lem:bound1} \label{bbbb}Suppose \eqref{edp} holds, for $\mathbb{P}%
$-a.e. $\omega\in\Omega$, $c_{t}(\omega)\leq c$ and $\log(2^{n-1}%
)\mathbb{<\delta}$. Then \eqref{maryam} gives an explicit bound on $r$, that
is%
\begin{equation}
r\leq\frac{c}{\frac{\exp(\delta)}{2^{n-1}}-1}\text{.} \label{cntbd}%
\end{equation}

\end{lemma}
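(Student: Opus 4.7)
The plan is to deduce the explicit bound from inequality \eqref{maryam} of Lemma~\ref{L51} by applying Jensen's inequality to push the logarithm outside the integral, then invoking the hypothesis $c_{t}(\omega)\leq c$ to reduce to an inequality in the single unknown $r$, and finally solving. The auxiliary hypothesis $\log(2^{n-1})<\delta$ will be used precisely to ensure that the algebraic step at the end divides by a positive quantity.

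First I would start from \eqref{maryam}, namely
\[
\int_{\Omega}\log\Bigl(2^{n-1}\bigl(\tfrac{c_{t}(\omega)}{r}+1\bigr)\Bigr)\,d\mathbb{P}(\omega)\geq\delta.
\]
Since $\log$ is concave and $\mathbb{P}$ is a probability measure, Jensen's inequality yields
\[
\log\Bigl(\int_{\Omega}2^{n-1}\bigl(\tfrac{c_{t}(\omega)}{r}+1\bigr)\,d\mathbb{P}(\omega)\Bigr)\geq\int_{\Omega}\log\Bigl(2^{n-1}\bigl(\tfrac{c_{t}(\omega)}{r}+1\bigr)\Bigr)\,d\mathbb{P}(\omega)\geq\delta.
\]
Linearity of the integral converts the left-hand side into $\log\bigl(2^{n-1}(\tfrac{1}{r}\int c_{t}\,d\mathbb{P}+1)\bigr)$, and the a.e. bound $c_{t}(\omega)\leq c$ gives $\int c_{t}\,d\mathbb{P}\leq c$, so
\[
\log\Bigl(2^{n-1}\bigl(\tfrac{c}{r}+1\bigr)\Bigr)\geq\delta.
\]

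Exponentiating gives $2^{n-1}(c/r+1)\geq e^{\delta}$, i.e.
\[
\tfrac{c}{r}\geq \tfrac{e^{\delta}}{2^{n-1}}-1.
\]
At this point the standing assumption $\log(2^{n-1})<\delta$ enters: it is equivalent to $e^{\delta}/2^{n-1}>1$, so the right-hand side is strictly positive and we may rearrange without reversing the inequality, obtaining exactly \eqref{cntbd}.

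There is no real obstacle here; the only point that requires care is making sure the direction of Jensen's inequality is correct (concavity of $\log$ gives an upper bound on the integral of $\log$, which is what one needs to turn the lower bound \eqref{maryam} into a usable inequality in $r$) and verifying that the threshold $\log(2^{n-1})<\delta$ is precisely what is needed for the final division to be legitimate.
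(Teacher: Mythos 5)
Your proposal is correct and follows essentially the same route as the paper: both reduce \eqref{maryam} to the single inequality $\log\bigl(2^{n-1}(\tfrac{c}{r}+1)\bigr)\geq\delta$, then exponentiate and solve for $r$, with the hypothesis $\log(2^{n-1})<\delta$ used exactly as you say, to ensure $\tfrac{\exp(\delta)}{2^{n-1}}-1>0$ before dividing. The only difference is that you route through Jensen's inequality, whereas the paper simply bounds the integrand pointwise via $c_{t}(\omega)\leq c$ and monotonicity of $\log$, which makes the Jensen step unnecessary though your version is equally valid.
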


\begin{proof}
Since for $\mathbb{P}$-a.e. $\omega\in\Omega$, $c_{t}(\omega)\leq c$, we get
\[
\int_{\Omega}\log(2^{n-1}(\frac{c_{t}(\omega)}{r}+1))d\mathbb{P(\omega)\leq
}\log(2^{n-1}(\frac{c}{r}+1)).
\]
By \eqref{maryam}, we have $\log(2^{n-1}(\frac{c}{r}+1))\geq\mathbb{\delta}$
which implies%
\begin{equation}
\frac{c}{r}+1\geq\frac{\exp(\mathbb{\delta})}{2^{n-1}}\text{.} \label{chacha}%
\end{equation}
Since $\log(2^{n-1})\mathbb{<\delta}$, we have $\frac{\exp(\mathbb{\delta}%
)}{2^{n-1}}>1$ and thus \eqref{chacha} gives a nontrivial bound on $r$. By
solving \eqref{chacha} for $r$, we get the upper bound given in \eqref{cntbd}.
\end{proof}

The next corollary shows another way of getting finiteness of the number of
measures $r$, previously obtained in Corollary \ref{coro1} using
multiplicative ergodic theory.

\begin{corollary}
Consider the assumptions in Lemma~\ref{lem:bound1}. Then the number of
measures $r$ in Corollary~\ref{coro1} is finite.

\begin{proof}
The integrand in \eqref{maryam} is a non-increasing function of $r$. Hence, as
$r\rightarrow\infty$, we get $\int_{\Omega}\log(2^{n-1})d\mathbb{P(\omega
)\geq\delta}$,which contradicts the assumption.
\end{proof}
\end{corollary}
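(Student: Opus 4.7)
The plan is to argue by contradiction, using the inequality \eqref{maryam} from Lemma~\ref{L51} together with the boundedness hypothesis on $c_t(\omega)$. Suppose that the number of mutually singular ergodic ACIPs is not finite; in particular, for every positive integer $R$ there exist $R$ such mutually singular ergodic ACIPs for the skew product $F$. Applying Lemma~\ref{L51} to any such subcollection of size $R$ (noting that the proof of that lemma only uses the mutual singularity of the chosen measures) yields
\begin{equation*}
\int_{\Omega}\log\!\Big(2^{n-1}\Big(\tfrac{c_{t}(\omega)}{R}+1\Big)\Big)\,d\mathbb{P}(\omega)\;\geq\;\delta,\qquad\text{for every }R\in\mathbb{N}.
\end{equation*}

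Next, I would send $R\to\infty$ inside the integral. For each fixed $\omega$, the integrand $R\mapsto\log(2^{n-1}(c_{t}(\omega)/R+1))$ is monotonically non-increasing in $R$ and converges pointwise to $\log(2^{n-1})$. The hypothesis $c_{t}(\omega)\leq c$ for $\mathbb{P}$-a.e.\ $\omega$ provides a uniform integrable majorant, namely the constant $\log(2^{n-1}(c+1))$, so the dominated convergence theorem (or equivalently monotone convergence from above) justifies exchanging limit and integral. This gives
\begin{equation*}
\log(2^{n-1})\;=\;\int_{\Omega}\log(2^{n-1})\,d\mathbb{P}(\omega)\;\geq\;\delta,
\end{equation*}
which directly contradicts the standing assumption $\log(2^{n-1})<\delta$ of Lemma~\ref{lem:bound1}. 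Hence $r$ cannot be infinite, establishing the corollary.

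There is really no serious obstacle here: the monotonicity in $R$ of the integrand and the uniform bound $c_{t}(\omega)\leq c$ make the passage to the limit entirely routine, and the work has already been done inside Lemma~\ref{L51}. The only point that deserves care is emphasising that the conclusion of Lemma~\ref{L51} is valid for \emph{any} finite number $R$ of mutually singular ergodic ACIPs, so the hypothetical existence of infinitely many gives bounds for arbitrarily large $R$.
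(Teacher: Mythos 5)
Your proposal is correct and follows essentially the same route as the paper: the paper likewise notes that the integrand in \eqref{maryam} is non-increasing in $r$, lets $r\to\infty$ to obtain $\int_{\Omega}\log(2^{n-1})\,d\mathbb{P}(\omega)\geq\delta$, and derives a contradiction with $\log(2^{n-1})<\delta$. You merely make explicit the justification for passing to the limit (dominated convergence with majorant $\log(2^{n-1}(c+1))$) and the observation that Lemma~\ref{L51} applies to any finite subcollection, both of which the paper leaves implicit.
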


Another immediate consequence of Lemma~\ref{lem:bound1} is the following.

\begin{corollary}
If $\frac{c}{\frac{\exp(\mathbb{\delta})}{2^{n-1}}-1}<2$, then there exists a
unique ergodic ACIP for the skew product.
\end{corollary}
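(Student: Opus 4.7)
The plan is to invoke Lemma~\ref{lem:bound1} directly and then use the integrality of $r$. First, I would observe that the hypotheses of the corollary are exactly those required to apply Lemma~\ref{lem:bound1}, so the bound
\[
r\leq\frac{c}{\frac{\exp(\delta)}{2^{n-1}}-1}
\]
holds for the number $r$ of mutually singular ergodic skew product ACIPs. Combined with the standing assumption of the corollary, this gives $r<2$.

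Next, I would note that $r$ is a nonnegative integer and in fact $r\geq 1$: by Theorem~\ref{muer} the admissible random Jab\l o\'{n}ski map admits a family of random invariant densities $\{h_\omega\}_{\omega\in\Omega}$ of bounded variation, and Remark~\ref{re5} promotes these to a genuine ACIP for the associated skew product $F$. The ergodic decomposition of this ACIP then yields at least one ergodic ACIP, so $r\geq 1$.

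Combining $r\geq 1$ with $r<2$, and using that $r\in\mathbb{N}$, forces $r=1$, which is the claimed uniqueness of the ergodic ACIP for the skew product.

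No substantive obstacle is expected: the entire content of the corollary is already encoded in the quantitative bound from Lemma~\ref{lem:bound1}, and the only additional input is the existence of at least one ergodic ACIP, which has already been established via the multiplicative ergodic framework in Section~\ref{S:4}. The only minor care point is confirming that $r$ indeed denotes a positive integer in the setting of the lemma, which follows from Corollary~\ref{coro1}.
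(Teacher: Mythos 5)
Your argument is exactly the paper's (implicit) one: the corollary is stated as an immediate consequence of Lemma~\ref{lem:bound1}, giving $r<2$, and combined with the existence of at least one ergodic ACIP (Theorem~\ref{muer}, Remark~\ref{re5}, Corollary~\ref{coro1}) and the integrality of $r$, this forces $r=1$. Your proposal is correct and follows essentially the same route.
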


\subsection{Another bound on $r$}

\label{S:2ndbound} We recall that $\gamma(\omega)$, introduced in
\eqref{shedo}, quantifies the expansion in the random system. The geometry of
the partitions $\{\mathcal{B}^{\omega}\}_{\omega\in\Omega}$ is related to the
quantities $q_{\omega}$, the number of rectangles in the partition
$\mathcal{B}^{\omega}$; $M(\omega)$, defined in \eqref{eq:Mom}; and
$c_{t}(\omega)$, the total number of interior crossing points in the partition
$\mathcal{B}^{\sigma\omega}$.

\begin{lemma}
\label{secondbd}Assume for $\mathbb{P}$-a.e. $\omega\in\Omega$, $M(\omega)\leq
M$, $c_{t}(\omega)\leq c$ and $q_{\omega}\leq q$. Then,
\begin{equation}
r\leq\frac{c(\log(q)-\log(M))}{\int_{\Omega}\log(\gamma(\omega
))d\mathbb{P(\omega)-}\log(M)}\text{.} \label{eman}%
\end{equation}

\end{lemma}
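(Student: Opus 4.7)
The plan is to repeat the geometric-complexity argument of Lemma~\ref{L51}, but with a coarser dichotomy that delivers the alternate bound. Recall from the proof of that lemma the volume recursion: for each $i\in\{1,\dots,r\}$, $\omega\in\Omega$ and $k\geq 0$, letting $N_{i,k}(\omega)$ denote the number of rectangles of $\mathcal{B}^{\sigma^{k+1}\omega}$ meeting $f_{\sigma^{k}\omega}(I_{i,\omega}(k))$, one has
$$m(I_{i,\omega}(s))\geq\prod_{k=0}^{s-1}\frac{\gamma(\sigma^{k}\omega)}{N_{i,k}(\omega)}\,m(I_{i,\omega}(0)).$$
The essential change from Lemma~\ref{L51} is that instead of bounding $N_{i,k}(\omega)$ by $2^{n-1}(c_{i,\omega}(k)+1)$ when $\sigma^{k}\omega\in D_{i}$, I would use the crude uniform bound $N_{i,k}(\omega)\leq q_{\sigma^{k+1}\omega}\leq q$ coming directly from the hypothesis $q_{\omega}\leq q$; when $\sigma^{k}\omega\notin D_{i}$, I would retain the sharper bound $N_{i,k}(\omega)\leq M(\sigma^{k}\omega)\leq M$ from the definition of $M(\omega)$.

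Taking logarithms, using $m(I_{i,\omega}(s))\leq 1$, and rewriting the complexity contribution as $\log M$ plus the correction $\log(q/M)$ on the event $\{\sigma^k\omega\in D_i\}$, the recursion gives
$$\sum_{k=0}^{s-1}\bigl[\log M-\log\gamma(\sigma^{k}\omega)\bigr]+\log(q/M)\sum_{k=0}^{s-1}\mathbf{1}_{\sigma^{k}\omega\in D_{i}}\geq\log m(I_{i,\omega}(0)).$$
The next step is to sum over $i=1,\dots,r$ and invoke the mutual-singularity input already used in Lemma~\ref{L51}: since the fiber supports $\text{Supp}(\mu_{\omega}^{i})$ are essentially open with pairwise disjoint interiors, each interior crossing point of $\mathcal{B}^{\sigma\omega}$ lies in at most one of them, so $\sum_{i=1}^{r}\mathbf{1}_{\sigma^{k}\omega\in D_{i}}\leq c_{t}(\sigma^{k}\omega)\leq c$ for every $k$.

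Substituting this and dividing by $s$, the right-hand side terms $\log m(I_{i,\omega}(0))/s$ vanish as $s\to\infty$, while Birkhoff's ergodic theorem applied to $\sigma$ turns the Ces\`aro averages on the left into $\mathbb{P}$-integrals, producing
$$r\bigl[\log M-\int_{\Omega}\log\gamma(\omega)\,d\mathbb{P}(\omega)\bigr]+c\log(q/M)\geq 0.$$
Rearranging yields \eqref{eman} in the intended regime where the denominator $\int_{\Omega}\log\gamma\,d\mathbb{P}-\log M$ is positive (otherwise the bound is vacuous). The only genuinely nontrivial step is the singularity count $\sum_{i}\mathbf{1}_{\sigma^{k}\omega\in D_{i}}\leq c_{t}(\sigma^{k}\omega)$, which is inherited from the setup of Lemma~\ref{L51}; the remainder is bookkeeping on the volume recursion and a direct application of Birkhoff, closely paralleling that proof. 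A secondary, minor point is ensuring the ergodic-theorem step uses the almost-everywhere constancy of the limit under the ergodic base $\sigma$, which is standard.
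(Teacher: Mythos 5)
Your argument is correct and follows essentially the same route as the paper's: the same volume recursion with the $q$-versus-$M$ dichotomy governed by $D_i$, the same mutual-singularity count $\sum_{i}\mathbf{1}_{D_i}\leq c$, and the same application of Birkhoff's ergodic theorem. The only cosmetic difference is that you sum over $i$ before passing to the ergodic limit, whereas the paper first extracts the lower bound $\mathbb{P}(D_i)\geq\big(\int_{\Omega}\log\gamma(\omega)\,d\mathbb{P}(\omega)-\log M\big)/\big(\log q-\log M\big)$ for each $i$ and then combines these bounds via $\sum_i\mathbb{P}(D_i)\leq c$; both orderings yield the identical final inequality.
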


\begin{proof}
Recall that $M<q$, by the definition of $M(\omega)$ in \eqref{eq:Mom}. For
$i=1,2,\dots r$, define%
\[
g_{i}(\omega)=\left\{
\begin{array}
[c]{cc}%
\begin{array}
[c]{c}%
\frac{\mathbb{\gamma}(\omega)}{q_{\omega}}\\
\\
\end{array}
&
\begin{array}
[c]{c}%
:\text{ }\omega\in D_{i}\\
\\
\end{array}
\\
\frac{\mathbb{\gamma}(\omega)}{M(\omega)} & :\text{ }\omega\in\Omega\backslash
D_{i}%
\end{array}
\right.  \text{.}%
\]

In a similar argument to \eqref{ala}, note that for all $\omega\in\Omega$ and
$s=1,2,3,\dots$, we have%
\[
m(I_{i,\omega}(s))\geq g_{i}(\sigma^{s-1}\omega)\text{.}\dots\text{.}%
g_{i}(\omega)m(I_{i,\omega}(0))\text{.}%
\]
Then, for all $s=1,2,3,\dots$, we have%
\begin{equation}
\frac{1}{s}\sum_{k=0}^{s-1}\log(g_{i}(\sigma^{k}\omega))\leq\frac{\log
(\frac{1}{m(I_{i,\omega}(0))})}{s}\text{.} \label{bbb}%
\end{equation}
It is also clear that
\begin{equation}
g_{i}(\omega)\geq\left\{
\begin{array}
[c]{cc}%
\begin{array}
[c]{c}%
\frac{\mathbb{\gamma}(\omega)}{q}\\
\\
\end{array}
&
\begin{array}
[c]{c}%
:\text{ }\omega\in D_{i}\\
\\
\end{array}
\\
\frac{\mathbb{\gamma}(\omega)}{M} & :\text{ }\omega\in\Omega\backslash D_{i}%
\end{array}
\right.  \text{.} \label{hani}%
\end{equation}
Using \eqref{hani} and Birkhoff ergodic theorem, from \eqref{bbb}, we get%
\[
\int_{D_{i}}\log(\frac{\mathbb{\gamma}(\omega)}{q})d\mathbb{P(\omega)+}%
\int_{\Omega\backslash D_{i}}\log(\frac{\mathbb{\gamma}(\omega)}%
{M})d\mathbb{P(\omega)\leq}0\text{,}%
\]
which simplifies to%
\[
\int_{\Omega}\log(\gamma(\omega))d\mathbb{P(\omega)-}\log(M)+m_{i}\log
(\frac{M}{q})\leq0\text{,}%
\]
where $m_{i}=m(D_{i})$. Therefore, for all $i=1,2\dots,r$, we have%
\begin{equation}
m_{i}\geq\frac{\int_{\Omega}\log(\gamma(\omega))d\mathbb{P(\omega)-}\log
(M)}{\log(q)-\log(M)}\text{.} \label{mohd}%
\end{equation}
For $i=1,2\dots,r$, define%
\[
a_{i}(\omega)=\left\{
\begin{array}
[c]{cc}%
1 & :\text{ }\omega\in D_{i}\\
0 & :\text{ }\omega\in\Omega\backslash D_{i}%
\end{array}
\right.  \text{,}%
\]
then for $\mathbb{P}$-a.e. $\omega\in\Omega$, $\sum_{i=1}^{r}a_{i}(\omega)\leq
c$. Note that $m_{i}=\int_{\Omega}a_{i}(\omega)d\mathbb{P(\omega)}$. Taking
the sum over all $i=1,2\dots,r$, we get $r\leq\frac{c}{\min(m_{1},\dots
,m_{r})}$. By \eqref{mohd}, we get the bound given in \eqref{eman}.
\end{proof}

Since $m_{i}\leq1$ for all $i=1,2\dots,r$, an immediate consequence of
\eqref{mohd} is the following.

\begin{corollary}
\label{onavgexpbound1}We have $\int_{\Omega}\log(\gamma(\omega
))d\mathbb{P(\omega)\leq}\log(q)$, where $q$ is defined in Lemma
\ref{secondbd}.
\end{corollary}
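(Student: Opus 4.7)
The plan is to read this off directly from inequality \eqref{mohd} established inside the proof of Lemma~\ref{secondbd}, using only the fact that $m_i = \mathbb{P}(D_i)$ is the $\mathbb{P}$-measure of a set and hence bounded above by $1$. Recall that at the start of the proof of Lemma~\ref{secondbd} it is observed that $M < q$ (this follows from the definition of $M(\omega)$ in \eqref{eq:Mom}, since $M(\omega) \leq q_\omega$), so after taking essential suprema we have $\log(q) - \log(M) > 0$.

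First I would fix any $i \in \{1,\dots,r\}$ and apply \eqref{mohd}, which gives
\[
m_i \;\geq\; \frac{\int_\Omega \log(\gamma(\omega))\,d\mathbb{P}(\omega) - \log(M)}{\log(q) - \log(M)}.
\]
Since $D_i \subseteq \Omega$ and $\mathbb{P}$ is a probability measure, $m_i = \mathbb{P}(D_i) \leq 1$. Therefore
\[
1 \;\geq\; \frac{\int_\Omega \log(\gamma(\omega))\,d\mathbb{P}(\omega) - \log(M)}{\log(q) - \log(M)}.
\]

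Next I would multiply through by the strictly positive quantity $\log(q) - \log(M)$ to obtain
\[
\log(q) - \log(M) \;\geq\; \int_\Omega \log(\gamma(\omega))\,d\mathbb{P}(\omega) - \log(M),
\]
and then cancel $\log(M)$ from both sides to conclude
\[
\int_\Omega \log(\gamma(\omega))\,d\mathbb{P}(\omega) \;\leq\; \log(q),
\]
which is precisely the claim. There is no real obstacle here — the work was already done in Lemma~\ref{secondbd}; the only thing to notice is that $m_i$, being the $\mathbb{P}$-measure of the set $D_i$, is automatically at most $1$, and that $\log(q) - \log(M) > 0$ so the inequality in \eqref{mohd} can be cleared of its denominator without flipping.
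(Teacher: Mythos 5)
Your proof is correct and is exactly the argument the paper intends: the corollary is stated as an immediate consequence of inequality \eqref{mohd} together with the trivial bound $m_i=\mathbb{P}(D_i)\leq 1$, and clearing the positive denominator $\log(q)-\log(M)$ gives the claim. Your additional remark that $\log(q)-\log(M)>0$ (so the inequality does not flip) is the one point the paper leaves implicit, and it is handled correctly.
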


\begin{corollary}
If $\frac{c(\log(q)-\log(M))}{\int_{\Omega}\log(\gamma(\omega
))d\mathbb{P(\omega)-}\log(M)}<2$, then there exists a unique ergodic ACIP for
the skew product.
\end{corollary}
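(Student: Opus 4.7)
The plan is to derive this immediately from Lemma~\ref{secondbd} combined with the existence result in Corollary~\ref{coro1}. First I would invoke Lemma~\ref{secondbd}, which applies under the stated admissibility and boundedness hypotheses and gives
\[
r\leq\frac{c(\log(q)-\log(M))}{\int_{\Omega}\log(\gamma(\omega))d\mathbb{P}(\omega)-\log(M)}.
\]
Under the hypothesis of the corollary, the right-hand side is strictly less than $2$, so $r<2$.

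Next I would note that $r$ is a nonnegative integer counting mutually singular ergodic skew product ACIPs, and by Theorem~\ref{muer} together with Corollary~\ref{coro1}, there is at least one such measure, so $r\geq 1$. The only integer $r$ satisfying $1\leq r<2$ is $r=1$, which gives the claimed uniqueness.

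There is no real obstacle: the statement is a purely arithmetic consequence of the previously established upper bound and the previously established existence. The only thing to double-check is that the quantitative bound of Lemma~\ref{secondbd} is phrased in terms of \emph{mutually singular} ergodic ACIPs (which it is), so that the integer $r$ appearing in the inequality is precisely the quantity we wish to pin down as equal to $1$.
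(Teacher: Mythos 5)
Your proposal is correct and is exactly the argument the paper intends (the corollary is stated without proof as an immediate consequence of Lemma~\ref{secondbd}): the bound forces $r<2$, while existence of at least one ergodic skew product ACIP (Theorem~\ref{muer}, Remark~\ref{re5}, Corollary~\ref{coro1}) forces $r\geq 1$, hence $r=1$.
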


\subsection{Example}

\label{S:ex} \begin{figure}[ptb]
\centering
\begin{minipage}{0.45\textwidth}
\centering
\includegraphics[width=0.6\textwidth]{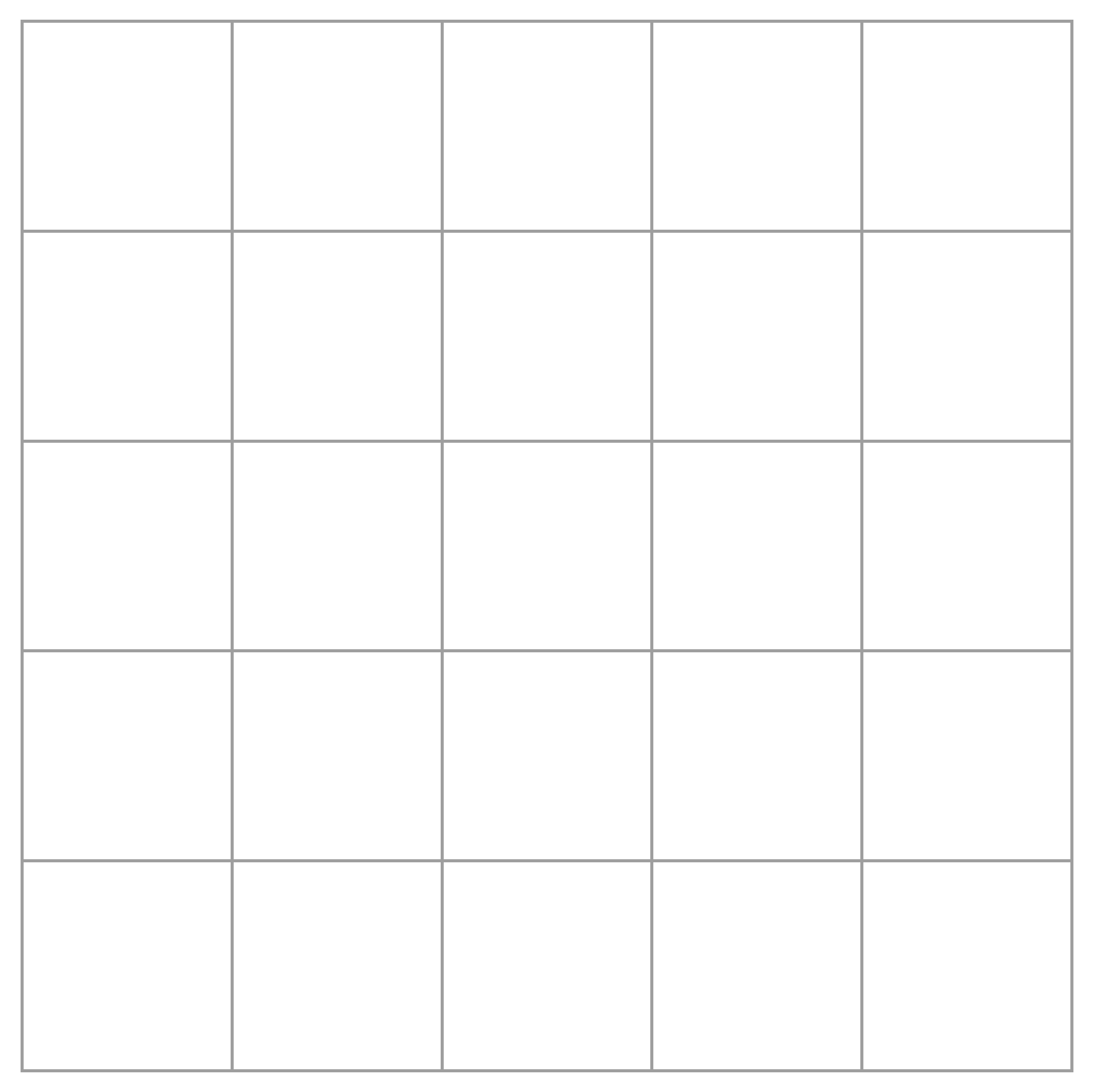} 
\caption{$I^{2}$ partitioned into $25$ equal squares.}
\label{partition}
\end{minipage}\hfill\begin{minipage}{0.55\textwidth}
\centering
\includegraphics[width=1.05\textwidth]{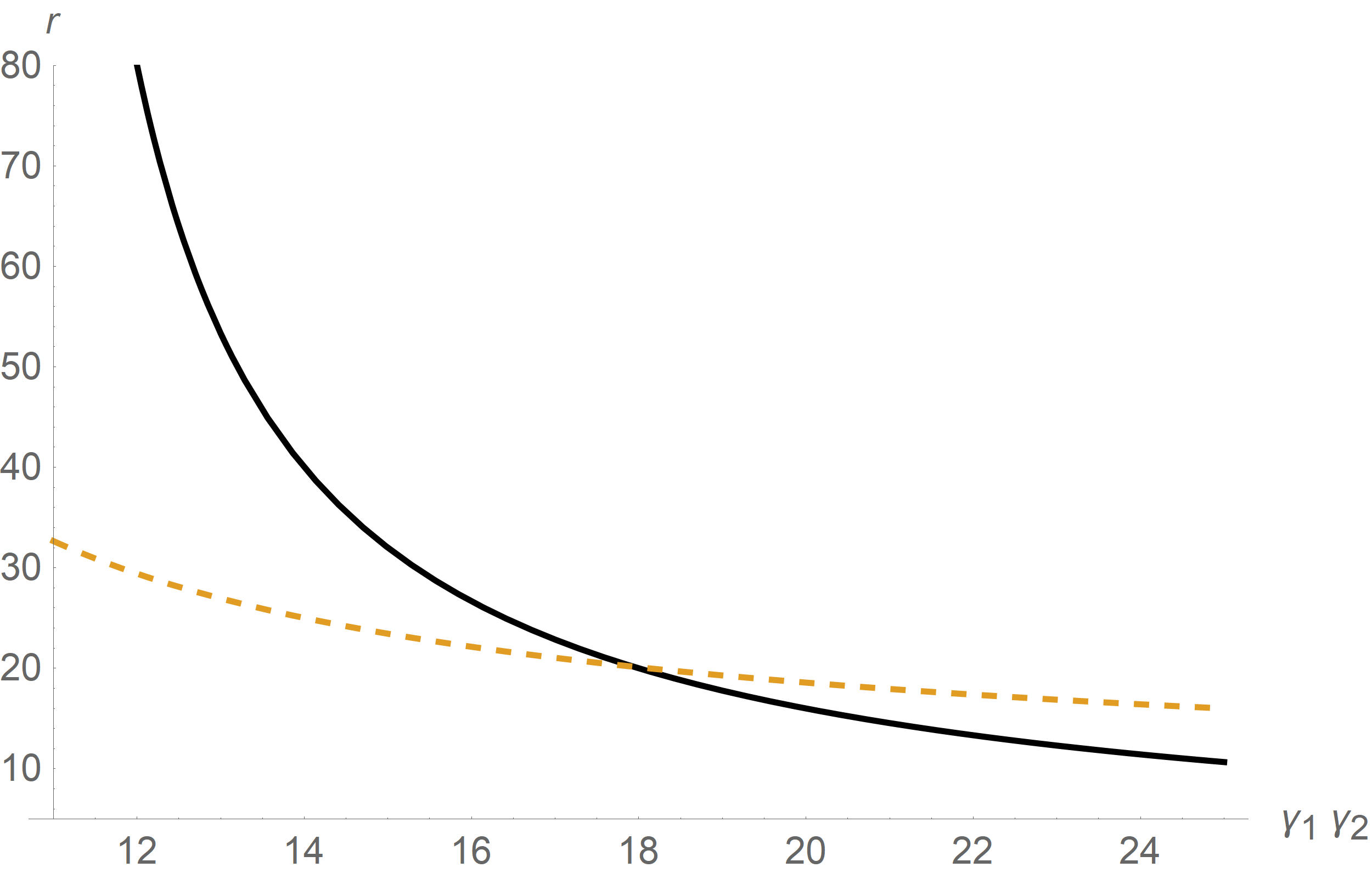} 
\caption{Bounds in \eqref{bb1} (solid) and \eqref{ssss} (dashed).}
\label{bounds}
\end{minipage}
\end{figure}

Consider an admissible random Jab\l o\'{n}ski map where the common partition
is taken to be the equally sized $25$ squares partition shown in Figure
\eqref{partition}. For this partition, we have $M=5$, $c=16$ and $q=25$.

Let $\gamma_{1},\gamma_{2}>0$ be such that for all $\omega\in\Omega$,
$\gamma_{1}(\omega)\geq\gamma_{1}$ and $\gamma_{2}(\omega)\geq\gamma_{2}$
where $\gamma_{i}(\omega)$ is defined in \eqref{fec}. By \eqref{shedo}, we
have
\[
\gamma(\omega)=\gamma_{1}(\omega)\gamma_{2}(\omega)\geq\gamma_{1}\gamma
_{2}\text{,}%
\]
for all $\omega\in\Omega$. Note that $\gamma_{1}$ and $\gamma_{2}$ can not
take values such that $\gamma_{1}\gamma_{2}>25$, because the rectangles of the
partition would be mapped outside $I^{2}$. The constant $\delta$ defined in
\eqref{edp} is
\[
\delta=\int_{\Omega}\log(\frac{\mathbb{\gamma}(\omega)}{M(\omega
)})d\mathbb{P(\omega)\geq}\log(\frac{\gamma_{1}\gamma_{2}}{5})\text{.}%
\]
For the bound in Section~\ref{S:1stbound}, we must have, in addition, that
$\gamma_{1}$ and $\gamma_{2}$ can not be such that $\gamma_{1}\gamma_{2}%
\leq10$. Since this contradicts the condition in Lemma \ref{bbbb} that
$\log(2^{n-1})\mathbb{<\delta}$, we can make the restriction that
\[
10<\gamma_{1}\gamma_{2}\leq25.
\]
Then, \eqref{cntbd} implies that
\begin{equation}
r\leq\frac{160}{\gamma_{1}\gamma_{2}-10}. \label{bb1}%
\end{equation}
The bound in \eqref{eman}, implies that
\begin{equation}
r\leq\frac{16\log(5)}{\log(\frac{\gamma_{1}\gamma_{2}}{5})}\text{.}
\label{ssss}%
\end{equation}

Figure \eqref{bounds} shows the dependence of the two bounds on $\gamma
_{1}\gamma_{2}$ and the regions on which each of the bounds is sharper. The
bounds from Sections~\ref{S:1stbound} and \ref{S:2ndbound} are shown in
black/solid and orange/dashed, respectively.

\section{Acknowledgments}

The authors are thankful to the referees for helpful suggestions and
corrections. The authors are thankful to Prof. Luigi Ambrosio for valuable
comments regarding the space of bounded variation defined in different
equivalent forms, Prof. Chris Bose for useful conversations and Prof. Sandro
Vaienti for bringing reference \cite{NakamuraToyokawa} to our attention. The
authors have been partially supported by the Australian Research Council
(DE160100147). Fawwaz Batayneh acknowledges the support of the University of
Queensland through an Australian Government Research Training Program Scholarship.

\end{document}